\newtheorem{lemma}{Lemma}[section]
\newtheorem{corollary}[lemma]{Corollary}
\newtheorem{theorem}[lemma]{Theorem}
\newtheorem{proposition}[lemma]{Proposition}
\newtheorem{definition}[lemma]{Definition}
\DeclareMathOperator*{\argmin}{arg\,min}
\DeclareMathOperator\supp{supp}
\title[Structured IHT with On- and Off-Grid Applications]{Structured Iterative Hard Thresholding with On- and Off-Grid Applications}
\author{Joseph S.~Donato}
\address{ University of Michigan \\ Ann Arbor, MI 48109}
\email{jsdonato@umich.edu}
\author{Howard W.~Levinson}
\address{Department of Mathematics and Computer Science\\ Santa Clara University \\ Santa Clara, CA 95050}
\email{hlevinson@scu.edu}
\begin{document}

\begin{abstract}
We consider linear sparse recovery problems where additional structure regarding the support of the solution is known. The form of the structure considered is non-overlapping sets of indices that each contain part of the support.  An algorithm based on iterative hard thresholding is proposed to solve this problem.   The convergence and error of the method are analyzed with respect to mutual coherence.  Numerical simulations are examined in the context of an inverse source problem, including modifications for off-grid recovery.  
\end{abstract}

\keywords{iterative hard thresholding, inverse source problem, mutual coherence, structured sparsity.}

\subjclass{65F10, 68P30, 78A46, 94A12}

\maketitle
\smallskip

\section{Introduction}
Sparse regularization is a powerful tool for recovering  solutions to linear problems where the true solution is known {\it a priori} to be sparse.  This has been a topic of high interest, especially in the case of underdetermined systems.  This has produced extensive literature in the field of compressive sensing \cite{candes2006stable,donoho2006compressed}.  Many effective algorithms have been developed  to solve linear sparse recovery problems, with accompanying theoretical analyses \cite{chen2001atomic,tibshirani1996regression,tropp2007signal}.
This  includes the iterative hard thresholding algorithm \cite{blumensath2009iterative}.

For some linear sparse recovery problems, additional information is known about the structure of the sparsity.  This known information regarding the support of the solution is referred to as structured sparsity, and can be quite general in form \cite{bach2012structured,micchelli2013regularizers,yu2011solving,yuan2006model}.  Algorithms for structured sparsity take advantage of this additional information to improve reliability or convergence speed  \cite{boyer2019compressed,li2019compressed}.

Many linear inverse problems with applications in the applied sciences benefit from sparse regularization \cite{candes2007sparsity,daubechies2004iterative,jin2017sparsity}.   This is true for direction-of-arrival (DOA) and related inverse source problems \cite{kim2012compressive,zhang2013sparsity}.  As an example application, we consider the related problem of determining the incident angles and real amplitudes of a small number of simultaneously incoming plane waves using measurements at known detector locations.  By applying structured sparsity ideas to these linear inverse problems, recovery is much more likely.

The mutual coherence, or coherence, of a matrix can be a useful tool for studying the convergence of sparse recovery algorithms \cite{donoho2003optimally,donoho2005stable,tropp2006just}.  Defined as the maximum absolute value of the cross-correlations of the columns of $A$, the coherence yields worst case bounds for many algorithms.  
However, despite its ease of calculation, coherence is often an unhelpful measure for analyzing many linear inverse problems that come from physical processes.  This is often due to the fact that a fine grid may be required to accurately discretize the physical process, even if the true support of the sparse solution is well separated.  This is indeed the case for the considered inverse source problem.  Off-grid methods are often desired in order to find the true incident angles regardless of an initial grid.  Off-grid sparsity algorithms are very useful for these cases, with off-grid algorithms studied for DOA problems in \cite{tan2013sparse,wu2018two,yang2018sparse, yang2012off}.  One hope with these techniques is that they yield reduced coherence values which align better with theory.   

In this paper, we introduce and develop a generalized form of the IHT algorithm for cases where specific information on the structure of the sparsity is known. This information will be in the form of mutually disjoint index sets which each reference a certain number of nonzero elements.  We refer to this algorithm as the structured IHT algorithm. The main contributions of the paper are to provide analysis and numerical simulations that demonstrate the benefit of the structured IHT algorithm over IHT when this additional structure is known.  We also develop a related algorithm for solving discretized linear sparse recovery problems where the true solution is off-grid. 

We mention the following additional related works.  Other examples of investigating the limitations of coherence can be found in \cite{adcock2017breaking}, and, especially in regards to linear inverse problems, \cite{jones2015asymptotic}.   While structured sparsity can be quite general, the specific form of structured sparsity considered in this paper is equivalent (up to permutation of entries) to the idea of sparsity in levels found in \cite{adcock2017breaking,doi:10.1137/15M1043972}. Recently, the works of \cite{IHTL,Adcock_2021} independently introduced an analogous version to the structured IHT algorithm proposed in our work, called IHT in Levels (IHTL).  In the cited work, the algorithm is analyzed via the restricted isometry property (RIP), as opposed to coherence level analysis.  A stochastic version of a structured IHT type algorithm was introduced in  \cite{zhou2019stochastic}. The work of \cite{NIPS2016_8e82ab72} considers IHT-type algorithms for recovering vectors that have overlapping structured sparsity.  This overlapping structure increases the complexity of the thresholding operator, which is a main focus of the cited work.  

The paper is organized as follows. In section 2, we review the  IHT algorithm and introduce the structured IHT algorithm for solving structured sparse linear recovery problems.  In section 3, we analyze the proposed algorithm by means of mutual coherence and provide theoretical guarantees of convergence.  We apply the structured IHT algorithm to a specific inverse source problem in section 4.  These results are compared with the preceding theory.   In section 5, we introduce a related algorithm for solving the same problem but with off-grid targets.  Numerical simulations and theoretical justifications are provided for this algorithm within the context of off-grid recovery.  We conclude with a discussion in section 6.

\section{Background and the Structured IHT Algorithm}
Throughout the paper, matrices will be denoted by uppercase letters and vectors by lowercase letter.  For a matrix $A$ with entries $A_{ij}$, we denote its $j$th column as $A_j$.  For a vector $x$, $\|x\|_p$ will denote its standard $\ell_p$ norm.   If $x$ is a vector of length $N$, and $S\subseteq\{1,...,N\}$, then $x_S$ is the vector of length $|S|$ containing the entries of $x$ restricted to $S$.

\subsection{Iterative Hard Thresholding Algorithm}

The original IHT algorithm was introduced in \cite{blumensath2009iterative} as a sparsity promoting recovery algorithm that was easy to implement and had   theoretical guarantees.  This algorithm has been extensively developed \cite{blanchard2015cgiht,blumensath2012accelerated,
blumensath2008iterative,blumensath2010normalized,jain2014iterative}.  The algorithm addresses the linear sparse recovery problem
\begin{equation}
\label{eq:basic_lineq}
Ax=b +\epsilon\ ,
\end{equation}
where $A$ is an $M\times N$ matrix, $\epsilon$ is a vector containing any noise, and $x$ is known to have only $k$ nonzero elements, where typically $k\ll N$.  This problem can be overdetermined $(M>N)$, but is typically assumed to be underdetermined in compressive sensing literature.  The algorithm solves for the $k$-sparse vector $x$ using the iterative update
\begin{equation}
\label{eq:IHT}
x^{(t+1)}=H_k \left( x^{(t)} + A^*(b - A x^{(t)} ) \right),
\end{equation}
where $H_k$ is the hard thresholding operator which sets all but the $k$ largest elements of $x$ (in absolute value) to 0.  This algorithm is the classic Richardson first-order iteration with the additional application of a thresholding operator after each iteration to promote sparsity.

\subsection{Structured IHT Algorithm}
The IHT algorithm updates the vector using a {\it global} thresholding operator $H_k$, but what if further information regarding the location of the nonzero entries was known? Structured sparsity is a well studied concept, especially in statistical learning theory.  The main concept is to use additional information about the structure of the sparsity, beyond the fact that $k$ entries are nonzero.  This structure is often in the form of group sparsity, an underlying graph, or a hierarchical method \cite{kim2010tree,maurer2012structured}.  This information could be additional {\it a priori} information, or it could be obtained by a ``preprocessing" step.  This latter case will be explored in Section \ref{sec:ongrid}.    

 For this paper, we will assume this known structure takes a specific form which groups the indices of $x$ into disjoint sets.  While the IHT algorithm assumes there are $k$ nonzero elements anywhere in $x$, we will assume that there are $L$ known disjoint index sets $S_1,\dots S_L$ such that $x_{S_j}$ (the vector $x$ restricted to indices in $S_j$), is $k_j$ sparse.  That is, we have a collection of index sets such that
\begin{align}
\label{eq:ss_def}
S_i\cap S_j =\emptyset \quad \forall i,j \ ; \
\bigcup_{j=1}^L S_j \subseteq \mathcal{I}_N= \{1,\dots,N\}\ ; \
\|x_{S_j}\|_0=k_j \text{ with } \sum_{j=1}^L k_j  = k \ ,
\end{align}
where the $\ell_0$ ``norm" counts the number of nonzero elements in the vector.
We remark that, up to a permutation of indices, this definition of structured sparsity is equivalent to the definition of sparsity in levels in \cite{Adcock_2021}.  Sparsity in levels assumes that the sets $S_j$ contain a connected interval of indices, which is highly applicable to vectors in specific bases (such as wavelets, for example).  We will use the slightly more general form of \eqref{eq:ss_def} in this paper.   

With this additional knowledge of the structure of $x$, it is natural to introduce a set of {\it local} thresholding operators $H^{(j)}_{k_j}$, defined by
\begin{equation}
\left[H^{(j)}_{k_j}(x)\right]_{S_j}= 
H_{k_j}(x_{S_j})\ \quad \ , \ \quad \left[H^{(j)}_{k_j}(x)\right]_{\mathcal{I}_N\setminus S_j} = x_{\mathcal{I}_N\setminus S_j} \ .
\end{equation}
By this definition, $H^{(j)}_{k_j}$ leaves entries outside of $S_j$ unchanged, and thresholds the entries in $S_j$ to keep only the $k_j$ largest.  
To threshold the entire vector $x$ properly, one applies each local thresholding operator to $x$ 
\begin{equation}
\mathcal{H}^{(S)}_k(x):=\left(H_{k_1}^{(1)}\circ\cdots\circ H_{k_L}^{(L)}\right)(x)
\end{equation}
Note that as the index sets $S_1,\dots,S_L$ are disjoint, the operators $H^{(1)},\dots,H^{(L)}$ commute.
The structured IHT algorithm to solve \eqref{eq:basic_lineq} is now iteratively defined by
\begin{equation}
\label{eq:siht}
x^{(t+1)}=\mathcal{H}^{(S)}_k\left( x^{(t)} + A^*(b - A x^{(t)} ) \right) \ .
\end{equation}
While use of the structured IHT algorithm requires additional information (in the form of sparsity levels for subsets of $x$), we are able to use this information to obtain stronger theoretical guarantees.  Practically speaking, the structured IHT algorithm should converge faster than IHT.  It can be highly effective when the sets $S_j$ contain entries that are unrelated in regards to the sensing matrix $A$.  Pseudocode for the structured IHT algorithm is contained in Algorithm \ref{algo:siht}.

\begin{algorithm}[H]
\label{algo:siht}
\SetAlgoLined
 {\bf Input}: Sensing matrix $A$ and data $b$\\
 {\bf Input}: Index sets $S_1,\dots,S_L$\\
 {\bf Input}: Sparsity levels $k_1,\dots,k_L$\\
 {\bf Output}: $x$\\
 \vspace{3mm}
 Initialize $x^{(0)}, t=0$\\
 \While{stopping criteria not met}{
  $z^{(t)} = x^{(t)} + A^*(b - A x^{(t)} )$\\
 \For{j=1 : L}{
$x^{(t+1)}=H_{k_j}^{(j)}(z^{(t)})$ 
  }
  $t\gets t+1$
 }
 \caption{Structured Iterative Hard Thresholding}
\end{algorithm} 
\vspace{1pc}
The computational complexity of the algorithm is the same as the standard IHT algorithm, where each iteration requires $O(MN)$ operations, obtained by first multiplying $Ax^{(t)}$, and subsequently multiplying this result by $A^*$ (as opposed to precomputing the matrix product $A^*A$).  There is possibly additional storage required, $O(N)$, to store the index sets $S_j$.  As the local thresholding operators commute, the inner loop can be run in parallel, though for small $L$, it may be more efficient to run them sequentially (in any order).

\section{Analysis of the Algorithm}
\label{sec:analysis}
Analysis of sparse recovery algorithms usually rely on one of two quantities: the restricted isometry property (RIP) or coherence.  The RIP measures how ``close" the sensing matrix $A$ is to being an orthogonal matrix.  As defined and developed in \cite{CandesTao2005}, a matrix $A$ satisfies the RIP of order $k$ if there exists a constant $\delta_k$ with $0 < \delta_k < 1$
such that, for all $k$-sparse vectors $x$, 
\begin{equation}
	(1-\delta_k)\|x\|_2^2\le \|Ax\|_2^2 \le (1+\delta_k)\|x\|_2^2 \ .
\end{equation}
While matrices that satisfy RIP bounds lead to strong theoretical guarantees, verifying that the RIP bound holds is a nontrivial task \cite{calderbank2010construction,rudelson2008sparse}.  Furthermore, many deterministic sensing matrices of interest do not obey strong RIP constraints \cite{bandeira2013road}.  

In contrast, the coherence of a matrix is a quantity that is easily calculated, with the downside of weaker theoretical results \cite{tillmann2013computational}.  In some sense, coherence-based analysis considers situations which are worst case scenarios.  The coherence $\mu(A)$ of a matrix is defined as
\begin{equation}
\label{eq:mu_def}
\mu(A)=\max_{i\neq j} \frac{|\langle A_i, A_j\rangle|}{\|A_i\|_2\|A_j\|_2} \ ,
\end{equation}
where we recall $A_i$ is the $i$th column of $A$. 
As opposed to the RIP constant, the coherence is an inherent property of the sensing matrix $A$ and does not depend on the underlying sparsity of $x$.  We note that the coherence, which can be easily computed, yields upper and lower bounds on the RIP constant \cite{5075882}, which can be helpful in specific cases.  However, these bounds are often not tight, and RIP analysis is often performed separately from a coherence analysis \cite{5484999}.

A version of the structured IHT algorithm has recently been analyzed via RIP in \cite{IHTL,Adcock_2021}.  Our coherence analysis of the structured IHT algorithm builds upon existing coherence analysis for the IHT algorithm.  The following theorem gives theoretical guarantees for the original IHT algorithm, which depends on the coherence and the sparsity of $x$.
\begin{theorem}
\label{thm1}
(Adapted from Theorem 3 in \cite{wang2015linear}) \\
Let $\{x^{(t)}\}$ be the sequence generated from the IHT algorithm given by \eqref{eq:IHT} for the equation $Ax^*=b+\epsilon$.  If $x^*$ is $k$-sparse, then 
\begin{equation}
\label{eq:result_iht}
\|x^{(t)}-x^*\|_1\le (3\mu k)^t\|x^{(0)}-x^*\|_1 +\frac{3k}{1-3\mu k}\|A^*\epsilon\|_\infty \ .
\end{equation}
\end{theorem}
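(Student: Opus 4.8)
The plan is to establish the one-step estimate
\[
\|x^{(t+1)}-x^*\|_1\ \le\ 3\mu k\,\|x^{(t)}-x^*\|_1\ +\ 3k\,\|A^*\epsilon\|_\infty
\]
and then iterate it, summing the resulting geometric series (which converges precisely in the regime $3\mu k<1$ in which the bound is informative). As is standard in coherence-based analysis and implicit in \cite{wang2015linear}, I take the columns of $A$ to be $\ell_2$-normalized, so that $M:=I-A^*A$ has zero diagonal and $|M_{ij}|=|\langle A_i,A_j\rangle|\le\mu$ whenever $i\ne j$. Writing $e^{(t)}:=x^{(t)}-x^*$ and substituting $b=Ax^*-\epsilon$, the pre-threshold vector is $z^{(t)}:=x^{(t)}+A^*(b-Ax^{(t)})=x^*+r^{(t)}$ with $r^{(t)}:=Me^{(t)}-A^*\epsilon$, and $x^{(t+1)}=H_k(z^{(t)})$.

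First I would split the new error according to supports. Put $T=\supp(x^*)$ (so $|T|\le k$), $\hat T=\supp(x^{(t+1)})$ (so $|\hat T|=k$), and $S=T\cup\hat T$. Since $x^{(t+1)}$ coincides with $z^{(t)}$ on $\hat T$ and vanishes off it, while $x^*$ vanishes off $T$, one reads off $[e^{(t+1)}]_i=[r^{(t)}]_i$ for $i\in\hat T$, $[e^{(t+1)}]_i=-x^*_i$ for $i\in T\setminus\hat T$, and $[e^{(t+1)}]_i=0$ otherwise, so $\|e^{(t+1)}\|_1=\|r^{(t)}_{\hat T}\|_1+\sum_{i\in T\setminus\hat T}|x^*_i|$. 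The discarded mass is then controlled by the defining property of $H_k$: because $|\hat T\setminus T|\ge|T\setminus\hat T|$, fix an injection $\sigma\colon T\setminus\hat T\to\hat T\setminus T$; for each such $i$ the coordinate $i$ is dropped while $\sigma(i)$ is kept, so $|z^{(t)}_i|\le|z^{(t)}_{\sigma(i)}|=|r^{(t)}_{\sigma(i)}|$ (as $x^*$ vanishes on $\hat T\setminus T$), and combining this with $|x^*_i|\le|z^{(t)}_i|+|r^{(t)}_i|$ yields $\sum_{i\in T\setminus\hat T}|x^*_i|\le\|r^{(t)}_{\hat T\setminus T}\|_1+\|r^{(t)}_{T\setminus\hat T}\|_1$. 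Using $\|r^{(t)}_{\hat T\setminus T}\|_1\le\|r^{(t)}_{\hat T}\|_1$ and feeding back gives $\|e^{(t+1)}\|_1\le 2\|r^{(t)}_{\hat T}\|_1+\|r^{(t)}_{T\setminus\hat T}\|_1$.

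The remaining step is to estimate $r^{(t)}$ on a small index set $W$. The noise part obeys $\|(A^*\epsilon)_W\|_1\le|W|\,\|A^*\epsilon\|_\infty$, and since $M$ has zero diagonal with off-diagonal entries at most $\mu$, every coordinate satisfies $|[Me^{(t)}]_i|=\bigl|\sum_{j\ne i}M_{ij}[e^{(t)}]_j\bigr|\le\mu\|e^{(t)}\|_1$, hence $\|(Me^{(t)})_W\|_1\le|W|\,\mu\|e^{(t)}\|_1$. Applying this with $|\hat T|=k$ and $|T\setminus\hat T|\le k$ turns the previous bound into $\|e^{(t+1)}\|_1\le 3\mu k\,\|e^{(t)}\|_1+3k\,\|A^*\epsilon\|_\infty$; unrolling the recursion and bounding $\sum_{j=0}^{t-1}(3\mu k)^j\le(1-3\mu k)^{-1}$ then yields \eqref{eq:result_iht}.

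The step I expect to be the crux is the support-splitting estimate. A coarser argument, $\|H_k(z^{(t)})-x^*\|_1\le 2\|r^{(t)}_S\|_1$ with $|S|\le 2k$, only produces the constant $4\mu k$; recovering the sharper $3\mu k$ requires keeping the contribution of $\hat T$ (coefficient $2$, size $k$) separate from that of $T\setminus\hat T$ (coefficient $1$, size $\le k$) rather than merging them into a single $2k$-element set. Everything else is routine triangle-inequality bookkeeping. Since the statement is Theorem~3 of \cite{wang2015linear} specialized to an exactly $k$-sparse $x^*$, one could instead simply invoke that result; I sketch the argument here because the analysis of the structured algorithm below follows the same template, with $k$ replaced by the level sparsities $k_j$ and $H_k$ by the local operators $H^{(j)}_{k_j}$.
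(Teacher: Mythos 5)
Your argument is correct, and it follows the same template the paper uses for its structured analogue: a thresholding step giving $\|x^{(t+1)}-x^*\|_1\le 3k\,\|z^{(t)}-x^*\|_\infty$ (your support-splitting estimate is exactly a proof of Lemma~\ref{lemma1} with $L=1$, which the paper does not reprove but cites from Appendix~D of \cite{wang2015linear}), followed by the coherence bound $\|z^{(t)}-x^*\|_\infty\le\mu\|x^{(t)}-x^*\|_1+\|A^*\epsilon\|_\infty$ as in Lemma~\ref{lemma2}, and then unrolling the recursion. The paper itself offers no separate proof of Theorem~\ref{thm1} beyond the citation, so your write-up simply fills in the same two-lemma route in the unstructured case.
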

This theorem implies linear convergence if $3\mu k <1$.  Practically speaking, $\mu<1/(3k)$ often holds for only very small values of $k$.  This is the main motivation for the following analysis of the structured IHT algorithm.  The hope is that, roughly speaking, the coherence only has to be smaller than $1/(3k_{\max})$ where $k_{\max}=\max_j k_j\le k$.
To fully analyze the structured IHT algorithm, we introduce a variant of coherence that will be applicable to our restrictions to certain index sets.  
\begin{definition}
We define the restricted coherence $\mu_{S,S'}$ between two sets $S$ and $S'$ of a matrix $A$ by
\begin{equation}
\mu_{S,S'}(A)=\max_{\substack{i\in S , j\in S'\\i\neq j}}\frac{\langle A_i,A_j \rangle}{\|A_i\|_2\|A_j\|_2} \ .
\end{equation}
When $S=S'$, $\mu_{S,S}$ will be denoted simply as $\mu_S(A)$.  
\end{definition}
Note that by this definition, $\mu_{S,S'}(A)$ is less than or equal to  $\mu(A)$ for any sets $S$ and $S'$.  For many physical processes that can be represented by the matrix $A$, if $S$ and $S'$ are index sets representing sufficiently different entries (for example, in physical location), one can expect that $\mu_{S,S'}(A)\ll \mu(A)$.  This idea will be further explored in the application of structured IHT to an inverse source problem in Section \ref{sec:ongrid}.  We will write $\mu_{S,S'}(A)$ and $\mu_S(A)$ as simply $\mu_{S,S'}$ and $\mu_S$ respectively when the dependence on the matrix $A$ is clear.

We are now ready for the analysis of the structured IHT algorithm.  Suppose the true sparsity value $k$ is known, as well as $L$ index sets, $S_1,\dots,S_L$, where the sparsity on each index set $S_n$ is $k_n$, with $\sum_{n=1}^L k_n=k$.  The general idea of the analysis is to reduce the usual IHT theoretical guarantees on the full equation $Ax=b$, to $L$ smaller problems of the linear system restricted to each $S_n$.  
As IHT guarantees depend on the coherence of the sensing matrix $A$ and the sparsity of $x$, it is beneficial to consider these individual subproblems which each have smaller sparsity values. Of course, additional factors appear in this analysis, as the $L$ smaller systems are interconnected by the sensing matrix, and not truly independent.  However, to leading order, the results are analogous with the reduction of $k$ to $k_n$ on each set $S_n$.  

\begin{theorem}
\label{thm2}
Let $\{x^{(t)}\}$ be the sequence generated from the structured IHT algorithm given by \eqref{eq:siht} for the equation $Ax^*=b+\epsilon$, where $A$ has been column-normalized $(\|A_j\|=1$ for all $j$).  Let $S_1,\dots,S_L$ be mutually disjoint index sets with corresponding sparsity values $k_1,\dots k_L$ such that $\sum_{i=1}^L k_i=k$.  Then, for all $n$, if $x_{S_n}$ is $k_n$-sparse, one has the error bound 
\begin{align}
\label{eq:thm2}
\|x_{S_n}^{(t)}- x^*_{S_n} \|_1\le (3&\mu_{S_n} k_n)^t\|x_{S_n}^{(0)}-x^*_{S_n}\|_1 +\frac{3k_n}{1-3\mu_{S_n} k_n}\|[A^*\epsilon]_{S_n}\|_\infty \nonumber \\
+&\sum_{s=1}^{t} \left[{t\choose s}\tilde{\rho}^{s}\rho^{t-s} E_s(n)\ +\frac{\tilde{\rho}^{s}}{(1-\rho)^{s+1}} \|A^*\epsilon\|_\infty K_s(n) \right] \ ,
\end{align}
where we define \begin{align}
    &\tilde{\rho}=\max_{\substack{1\le m,n\le L\\m\neq n}} 3k_n\mu_{S_n,S_m} \qquad ; \qquad \rho=\max_{\substack{1\le n\le L}} 3k_n\mu_{S_n} \nonumber\\ 
    &E_s(n) = \sum_{m_1\neq n}\sum_{m_2\neq m_1}\cdots\sum_{m_s\neq m_{s-1}} \|x^{(0)}_{S_{m_s}}-x^*_{S_{m_s}}\|_1 \nonumber \\
    &K_s(n) = \sum_{m_1\neq n}\sum_{m_2\neq m_1}\cdots\sum_{m_s\neq m_{s-1}} 3k_{m_s} \ .
\end{align} 
\end{theorem}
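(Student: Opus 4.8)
The plan is to localize the IHT estimate of Theorem~\ref{thm1} to each block $S_n$, obtaining a coupled system of one-step inequalities that is then unrolled. Fix $n$ and set $v^{(t)}=x^{(t)}+A^*(b-Ax^{(t)})$ and $d^{(t)}=x^{(t)}-x^*$; because the local operators commute and each fixes the complement of its block, \eqref{eq:siht} gives $x^{(t+1)}_{S_n}=H_{k_n}\bigl(v^{(t)}_{S_n}\bigr)$. Since $Ax^*=b+\epsilon$ we have $v^{(t)}-x^*=(I-A^*A)d^{(t)}+A^*\epsilon$. Assuming $\bigcup_j S_j=\mathcal{I}_N$ (so that the iterates remain supported on the blocks; otherwise restrict $A$ to those columns), for a coordinate $i\in S_n$ I would expand $[(I-A^*A)d^{(t)}]_i=-\sum_{j\ne i}\langle A_i,A_j\rangle d^{(t)}_j$ and split the sum according to the partition, using $|\langle A_i,A_j\rangle|\le\mu_{S_n}$ when $j\in S_n$ and $|\langle A_i,A_j\rangle|\le\mu_{S_n,S_m}$ when $j\in S_m$, $m\ne n$. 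This bounds $|[v^{(t)}-x^*]_i|$ by $\mu_{S_n}\|d^{(t)}_{S_n}\|_1+\sum_{m\ne n}\mu_{S_n,S_m}\|d^{(t)}_{S_m}\|_1+\|[A^*\epsilon]_{S_n}\|_\infty$ for every $i\in S_n$.

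Next I would feed this pointwise bound into the thresholding step exactly as in the proof of Theorem~\ref{thm1}: compare $x^{(t+1)}_{S_n}=H_{k_n}(v^{(t)}_{S_n})$ with the $k_n$-sparse vector $x^*_{S_n}$ on the union $W_n\subseteq S_n$ of their supports, $|W_n|\le 2k_n$, and carry out the careful accounting of ``kept'' versus ``discarded'' coordinates that converts the crude factor $4k_n$ into the sharp $3k_n$. Writing $\delta^{(t)}_n:=\|x^{(t)}_{S_n}-x^*_{S_n}\|_1$, this produces the coupled recursion
\[
\delta^{(t+1)}_n\ \le\ \rho_n\,\delta^{(t)}_n+\sum_{m\ne n}\beta_{nm}\,\delta^{(t)}_m+\gamma_n,
\qquad
\rho_n=3k_n\mu_{S_n},\ \ \beta_{nm}=3k_n\mu_{S_n,S_m},\ \ \gamma_n=3k_n\|[A^*\epsilon]_{S_n}\|_\infty,
\]
with $\rho_n\le\rho$ and $\beta_{nm}\le\tilde\rho$. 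Collecting $\delta^{(t)}=(\delta^{(t)}_1,\dots,\delta^{(t)}_L)^{\!\top}$, this reads $\delta^{(t+1)}\le M\delta^{(t)}+\gamma$ entrywise for the nonnegative matrix $M$ with $M_{nn}=\rho_n$, $M_{nm}=\beta_{nm}$, and nonnegative vector $\gamma=(\gamma_n)$; iterating and using $M\ge 0$ gives $\delta^{(t)}\le M^{t}\delta^{(0)}+\sum_{s'=0}^{t-1}M^{s'}\gamma$ entrywise.

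The remaining work is the combinatorial unrolling. I would expand $(M^{t}\delta^{(0)})_n$ over length-$t$ walks on $\{1,\dots,L\}$ starting at $n$ and group them by the number $s$ of steps that change the current index. Such a walk is determined by its visited sequence $n=q_0,q_1,\dots,q_s$ (with $q_j\ne q_{j-1}$) together with the dwell times at the $q_j$, which are nonnegative and sum to $t-s$; its weight is a product of $s$ off-diagonal factors, each $\le\tilde\rho$, and $t-s$ diagonal factors, each $\le\rho$, there are $\binom{t}{s}$ admissible dwell-time vectors, and summing the endpoint weights $\delta^{(0)}_{q_s}$ over admissible $(q_1,\dots,q_s)$ is exactly $E_s(n)$. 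Keeping the $s=0$ walk exactly — it contributes $\rho_n^{t}\delta^{(0)}_n=(3\mu_{S_n}k_n)^{t}\|x^{(0)}_{S_n}-x^*_{S_n}\|_1$ — gives the first term and the $E_s$ part of \eqref{eq:thm2}. The same expansion applied to $\sum_{s'=0}^{t-1}M^{s'}\gamma$ yields, from the $s=0$ walks, $\sum_{s'\ge 0}\rho_n^{s'}\gamma_n=\tfrac{3k_n}{1-3\mu_{S_n}k_n}\|[A^*\epsilon]_{S_n}\|_\infty$, the second term; and from the $s\ge 1$ walks, after bounding $\gamma_{q_s}\le 3k_{q_s}\|A^*\epsilon\|_\infty$, the factor $\tilde\rho^{s}\|A^*\epsilon\|_\infty K_s(n)$ times $\sum_{s'\ge s}\binom{s'}{s}\rho^{s'-s}=(1-\rho)^{-(s+1)}$ (the standard identity $\sum_{j\ge 0}\binom{j+s}{s}\rho^{j}=(1-\rho)^{-(s+1)}$, valid since $\rho<1$); extending the $s$-summations up to $t$ only enlarges the right-hand side. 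Adding the four pieces gives \eqref{eq:thm2}.

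The main obstacle is the first part: faithfully reproducing, block by block, the delicate support accounting in the proof of Theorem~\ref{thm1} responsible for the constant $3k_n$, while cleanly isolating the inter-block contributions as the $\mu_{S_n,S_m}$ terms and tracking where $\|[A^*\epsilon]_{S_n}\|_\infty$ may be used in place of $\|A^*\epsilon\|_\infty$. Once the one-step recursion is in hand, the rest is bookkeeping — matching the walk expansion of $M^{t}$ and of $\sum_{s'}M^{s'}\gamma$ with the definitions of $E_s(n)$ and $K_s(n)$, and invoking the negative-binomial series for the noise geometric sums.
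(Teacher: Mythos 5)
Your proposal is correct and follows essentially the same route as the paper: your coherence splitting of $(I-A^*A)(x^{(t)}-x^*)$ restricted to $S_n$ is Lemma \ref{lemma2}, your block-wise thresholding bound with constant $3k_n$ is Lemma \ref{lemma1} (which the paper, like you, defers to Wang et al.), and your coupled recursion $\delta^{(t+1)}_n\le\rho_n\delta^{(t)}_n+\sum_{m\ne n}\beta_{nm}\delta^{(t)}_m+\gamma_n$ is exactly inequality \eqref{eq:1}. The only difference is organizational: you unroll the recursion as a walk expansion of $M^{t}\delta^{(0)}$ and $\sum_{s'}M^{s'}\gamma$ grouped by the number of off-diagonal steps, whereas the paper substitutes \eqref{eq:1} into itself recursively, but both produce the identical terms $\binom{t}{s}\tilde{\rho}^{s}\rho^{t-s}E_s(n)$ and $(1-\rho)^{-(s+1)}\tilde{\rho}^{s}\|A^*\epsilon\|_\infty K_s(n)$.
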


We will prove Theorem \ref{thm2} by using the following two lemmas.  Lemma \ref{lemma1} is adapted from \cite{wang2015linear} whose proof can be found therein (Appendix D).  Immediately following the statement of Lemma \ref{lemma1}, we state and prove the second lemma, Lemma \ref{lemma2}.  

\begin{lemma}
\label{lemma1}
Let $\{x^{(t)}\}$ be the sequence generated from the structured IHT algorithm given by \eqref{eq:siht} for a column-normalized matrix $A$.  For any $t\ge1$, let $z^{(t)}=x^{(t)} + A^*(b - A x^{(t)} )$, which is the result of the structured IHT algorithm before any thresholding.  Then for any $t\ge1$ and any $1\le n\le L$,
\begin{equation}
\label{eq:11}
\|x^{(t+1)}_{S_n}-x^*_{S_n}\|_1\le 3k_n \|z^{(t+1)}_{S_n}-x^*_{S_n}\|_\infty \ . 
\end{equation}
\end{lemma}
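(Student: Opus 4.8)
The plan is to strip away everything outside $S_n$ and reduce Lemma~\ref{lemma1} to a single elementary hard-thresholding inequality on $S_n$. Write $z$ for the pre-thresholded vector $x^{(t)}+A^*(b-Ax^{(t)})$, so that $x^{(t+1)}=\mathcal{H}^{(S)}_k(z)$ by \eqref{eq:siht}. Because the sets $S_1,\dots,S_L$ are disjoint and the only factor of $\mathcal{H}^{(S)}_k$ that touches coordinates in $S_n$ is $H^{(n)}_{k_n}$, restricting the update to $S_n$ gives simply $x^{(t+1)}_{S_n}=H_{k_n}(z_{S_n})$; the other local operators are irrelevant here. Hence it suffices to prove the following claim, which is essentially what is established in Appendix~D of \cite{wang2015linear}: for any $k_n$-sparse vector $v$ and any vector $w$ of the same length, $\|H_{k_n}(w)-v\|_1\le 3k_n\|w-v\|_\infty$. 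Applying this with $w=z_{S_n}$ and $v=x^*_{S_n}$ yields \eqref{eq:11}.

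To prove the claim I would compare supports. Put $\Gamma=\supp(v)$, so $|\Gamma|\le k_n$, and let $\hat\Gamma=\supp(H_{k_n}(w))$, the indices of the $k_n$ largest-magnitude entries of $w$ under some fixed tie-breaking rule, so $|\hat\Gamma|\le k_n$. Since $H_{k_n}(w)-v$ vanishes off $\Gamma\cup\hat\Gamma$,
\[
\|H_{k_n}(w)-v\|_1=\sum_{i\in\hat\Gamma}|w_i-v_i|+\sum_{i\in\Gamma\setminus\hat\Gamma}|v_i| .
\]
The first sum has at most $k_n$ terms, each at most $\|w-v\|_\infty$. For the second sum, when $w$ has at least $k_n$ nonzero entries one has $|\hat\Gamma|=k_n\ge|\Gamma|$, hence $|\hat\Gamma\setminus\Gamma|\ge|\Gamma\setminus\hat\Gamma|$, so there is an injection $\sigma\colon\Gamma\setminus\hat\Gamma\to\hat\Gamma\setminus\Gamma$. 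For $i\in\Gamma\setminus\hat\Gamma$: since $\sigma(i)\in\hat\Gamma$ and $i\notin\hat\Gamma$ we get $|w_i|\le|w_{\sigma(i)}|$; since $\sigma(i)\notin\Gamma$ we have $v_{\sigma(i)}=0$, so $|w_{\sigma(i)}|=|w_{\sigma(i)}-v_{\sigma(i)}|\le\|w-v\|_\infty$; therefore $|v_i|\le|v_i-w_i|+|w_i|\le 2\|w-v\|_\infty$. The second sum thus has at most $k_n$ terms each at most $2\|w-v\|_\infty$, and adding the two estimates gives $\|H_{k_n}(w)-v\|_1\le k_n\|w-v\|_\infty+2k_n\|w-v\|_\infty=3k_n\|w-v\|_\infty$.

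The only point requiring care is the degenerate case in which $w$ (equivalently $z_{S_n}$) has fewer than $k_n$ nonzero entries, so that $H_{k_n}(w)=w$ and the matching step is vacuous; there $H_{k_n}(w)-v=w-v$ is supported on a set of size at most $2k_n$, whence $\|H_{k_n}(w)-v\|_1\le 2k_n\|w-v\|_\infty\le 3k_n\|w-v\|_\infty$ directly. Beyond that bookkeeping, the consistent tie-breaking rule for $\hat\Gamma$, and two applications of the triangle inequality, there is no real obstacle; since \cite{wang2015linear} already records this computation, in the paper we would simply cite it, but the above is the route one would take to reprove it in the structured setting.
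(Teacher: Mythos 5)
Your proposal is correct and follows essentially the route the paper intends: the paper itself does not spell out a proof of Lemma \ref{lemma1} but defers to Appendix D of \cite{wang2015linear}, and your argument is precisely that computation adapted to the structured setting --- the observation that only $H^{(n)}_{k_n}$ acts on the coordinates in $S_n$, so $x^{(t+1)}_{S_n}=H_{k_n}(z_{S_n})$, followed by the standard support decomposition and the injection from $\Gamma\setminus\hat\Gamma$ into $\hat\Gamma\setminus\Gamma$ giving the $k_n+2k_n$ split. The bookkeeping, including the degenerate case where $z_{S_n}$ has fewer than $k_n$ nonzero entries, is complete and correct.
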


\begin{lemma}
\label{lemma2}
With the same hypotheses as in Lemma \ref{lemma1}, for any $t\ge1$ and any $1\le n\le L$,
\begin{equation}
\|z^{(t+1)}_{S_n}-x^*_{S_n}\|_\infty \le \mu_{S_n}\|x^{(t)}_{S_n}-x^*_{S_n}\|_1+\sum_{m\neq n} \mu_{S_n,S_m}\|x^{(t)}_{S_m}-x^*_{S_m}\|_1+\|[A^*\epsilon]_{S_n}\|_\infty \ .
\end{equation}
\end{lemma}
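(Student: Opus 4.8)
The plan is to turn $z^{(t+1)}-x^*$ into a linear image of the current error $x^{(t)}-x^*$ plus a noise term, and then read off the $\ell_\infty$ bound on $S_n$ block by block. Using $b=Ax^*-\epsilon$ (from $Ax^*=b+\epsilon$) in the defining relation $z^{(t+1)}=x^{(t)}+A^*(b-Ax^{(t)})$ and cancelling $A^*Ax^*$ gives
\[
z^{(t+1)}-x^* = (I-A^*A)(x^{(t)}-x^*) - A^*\epsilon .
\]
Set $d := x^{(t)}-x^*$. Since $A$ is column-normalized the diagonal of $A^*A$ is $1$ in every entry, so for each $i\in S_n$ the self-interaction cancels and
\[
[z^{(t+1)}-x^*]_i = -\sum_{j\neq i}\langle A_i,A_j\rangle\, d_j - [A^*\epsilon]_i .
\]

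Next I would apply the triangle inequality and split the index $j$ over the blocks $S_1,\dots,S_L$: the part with $j\in S_n\setminus\{i\}$, and the parts with $j\in S_m$ for $m\neq n$. Here one uses that $x^*$, and hence all the iterates, have support contained in $\bigcup_m S_m$ — this is exactly the content of the structured-sparsity hypothesis, since $\sum_m\|x^*_{S_m}\|_0=\|x^*\|_0$ — so no indices outside the $S_m$ contribute. For the diagonal block, every ratio $\langle A_i,A_j\rangle/(\|A_i\|_2\|A_j\|_2)$ with $i,j\in S_n$, $i\neq j$, is at most $\mu_{S_n}$, and since the columns are unit vectors this yields $\sum_{j\in S_n,\,j\neq i}|\langle A_i,A_j\rangle|\,|d_j|\le\mu_{S_n}\|d_{S_n}\|_1$. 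For an off-diagonal block $S_m$ with $m\neq n$, disjointness forces $i\neq j$ automatically and the relevant ratio is at most $\mu_{S_n,S_m}$, so $\sum_{j\in S_m}|\langle A_i,A_j\rangle|\,|d_j|\le\mu_{S_n,S_m}\|d_{S_m}\|_1$.

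Summing the block estimates and using $|[A^*\epsilon]_i|\le\|[A^*\epsilon]_{S_n}\|_\infty$ gives, for every $i\in S_n$,
\[
\bigl|[z^{(t+1)}-x^*]_i\bigr|\le \mu_{S_n}\|d_{S_n}\|_1+\sum_{m\neq n}\mu_{S_n,S_m}\|d_{S_m}\|_1+\|[A^*\epsilon]_{S_n}\|_\infty ,
\]
and taking the maximum over $i\in S_n$, then substituting back $d=x^{(t)}-x^*$, is precisely the claimed inequality.

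The argument is a few lines of algebra plus the triangle inequality, so there is no serious obstacle; the only point needing care is the block decomposition of $\sum_{j\neq i}$ — in particular, checking that no mass sits outside $\bigcup_m S_m$ (which is where the structured-sparsity assumption is used) and correctly pairing the $(i\in S_n,\,j\in S_m)$ terms with the restricted coherence $\mu_{S_n,S_m}$ rather than with the global $\mu$.
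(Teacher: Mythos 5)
Your proof is correct and follows essentially the same route as the paper's: rewrite $z^{(t+1)}-x^*$ as $(I-A^*A)(x^{(t)}-x^*)$ plus a noise term, use column normalization to zero the diagonal of $A^*A$, and bound the off-diagonal contributions blockwise by $\mu_{S_n}$ and $\mu_{S_n,S_m}$ before adding the $\|[A^*\epsilon]_{S_n}\|_\infty$ term. The only differences are cosmetic: you argue entrywise over $i\in S_n$ rather than via $\max_{j\in S_n}$, and you explicitly flag the assumption that the error carries no mass outside $\bigcup_m S_m$, which the paper's block decomposition uses silently.
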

\begin{proof}[Proof of Lemma \ref{lemma2}]
For any $t\ge 1$ and restriction to index set $S_n$,
\begin{equation}
z^{(t+1)}_{S_n}=x^{(t)}_{S_n}-\left[A^*(Ax^{(t)}-b)\right]_{S_n} \ .
\end{equation}
Substituting in $b=Ax^*+\epsilon$ and subtracting $x^*_{S_n}$ from both sides we obtain
\begin{equation}
z^{(t+1)}_{S_n}-x^*_{S_n}=x^{(t)}_{S_n}-x^*_{S_n}-\left[A^*A(x^{(t)}-x^*)\right]_{S_n}+[A^*\epsilon]_{S_n} \ .
\end{equation}
As $x^{(t)}_{S_n}-x^*_{S_n}=[x^{(t)}-x^*]_{S_n}$, we can rewrite this as
\begin{equation}
z^{(t+1)}_{S_n}-x^*_{S_n}=\left[\left(I-A^*A\right)(x^{(t)}-x^*)\right]_{S_n}+[A^*\epsilon]_{S_n} \ .
\end{equation}
By taking infinity norms of both sides and applying the triangle inequality, we have
\begin{equation}
\label{eq:prop_m1}
\|z^{(t+1)}_{S_n}-x^*_{S_n}\|_\infty\le \left\|\left[\left(I-A^*A\right)(x^{(t)}-x^*)\right]_{S_n}\right\|_\infty+\|[A^*\epsilon]_{S_n}\|_\infty \ .
\end{equation}
We will now show that the following inequality holds:
\begin{equation}
\label{eq:prop1}
\left\|\left[\left(I-A^*A\right)(x^{(t)}-x^*)\right]_{S_n}\right\|_\infty \le \mu_{S_n}\|x^{(t)}_{S_n}-x^*_{S_n}\|_1+\sum_{m\neq n} \mu_{S_n,S_m}\|x^{(t)}_{S_m}-x^*_{S_m}\|_1 \ .
\end{equation}
As $A$ is column-normalized, $I-A^*A$ has zeros on the diagonal, which implies that for $i\in S_m$ and $j\in S_n$ we have
\begin{equation}
|(I-A^*A)_{ij}|\le \mu_{S_m,S_n} \ .
\end{equation}
Thus, we have
\begin{align*}
\left\|\left[\left(I-A^*A\right)(x^{(t)}-x^*)\right]_{S_n}\right\|_\infty& =  \max_{j\in S_n}\left| \sum_{i=1}^N (I-A^*A)_{ji}(x^{(t)}-x^*)_i \right| \\
\le  \max_{j\in S_n}\sum_{i\in S_n} & \left| (I-A^*A)_{ji}(x^{(t)}-x^*)_i \right|+\sum_{i\not\in S_n} \left| (I-A^*A)_{ji}(x^{(t)}-x^*)_i \right| \\
\le & \mu_{S_n}\sum_{i\in S_n} \left|(x^{(t)}-x^*)_i \right|+\sum_{S_m\neq S_n}\mu_{S_n,S_m}\sum_{i\in S_m}  \left|(x^{(t)}-x^*)_i \right|  \\
= & \mu_{S_n}\|x^{(t)}_{S_n}-x^*_{S_n}\|_1+\sum_{m\neq n} \mu_{S_n,S_m}\|x^{(t)}_{S_m}-x^*_{S_m}\|_1 \ ,
\end{align*}
which proves \eqref{eq:prop1}.
By applying \eqref{eq:prop1} to \eqref{eq:prop_m1}, we arrive at the desired conclusion.  
\end{proof}
With these two lemmas in hand, we can now prove Theorem \ref{thm2}.  
\begin{proof}[Proof of Theorem \ref{thm2}]
By combining Lemmas \ref{lemma1} and \ref{lemma2}, we obtain the inequality
\begin{equation}
\label{eq:1}
\|x^{(t)}_{S_n}-x^*_{S_n}\|_1\le 3\mu_{S_n}k_n\|x^{(t-1)}_{S_n}-x^*_{S_n}\|_1 +3k_n\sum_{m\neq n}\mu_{S_n,S_m}\|x^{(t-1)}_{S_m}-x^*_{S_m}\|_1+3k_n\|[A^*\epsilon]_{S_n}\|_\infty .
\end{equation}
Iterating back once in the term  $\|x^{(t-1)}_{S_n}-x^*_{S_n}\|_1 $ using \eqref{eq:1}  yields 
\begin{align}
\|x^{(t)}_{S_n}-x^*_{S_n}\|_1\le (3\mu_{S_n}k_n)^2\|x^{(t-2)}_{S_n}-x^*_{S_n}\|_1+(3k_n)^2\mu_{S_n}\sum_{m\neq n}\mu_{S_n,S_m}\|x^{(t-2)}_{S_m}-x^*_{S_m}\|_1\nonumber \\ +3k_n\sum_{m\neq n}\mu_{S_n,S_m}\|x^{(t-1)}_{S_m}-x^*_{S_m}\|_1+3k_n(1+3\mu_{S_n}k_n)\|[A^*\epsilon]_{S_n}\|_\infty .
\end{align}
By continuing to iterate back to $t=0$ in only the first error term on the right hand side (which is restricted to $S_n$), we obtain
\begin{align}
\label{eq:2}
&\|x^{(t)}_{S_n}-x^*_{S_n}\|_1\le (3\mu_{S_n}k_n)^t\|x^{(0)}_{S_n}-x^*_{S_n}\|_1 +\nonumber\\
&\qquad \quad\sum_{\ell=0}^{t-1}(3\mu_{S_n}k_n)^\ell\sum_{m\neq n} 3k_n\mu_{S_n,S_m}\|x^{(t-1-\ell)}_{S_m}-x^*_{S_m}\|_1+3k_n\sum_{\ell=0}^{t-1}(3\mu_{S_n}k_n)^\ell\|[A^*\epsilon]_{S_n}\|_\infty .
\end{align}
We now iterate back similarly in each of the remaining error terms $\|x^{(t-1-\ell)}_{S_m}-x^*_{S_m}\|_1$.  However, we will use upper bounds \begin{equation}
\label{eq:ub}
    \tilde{\rho}=\max_{\substack{1\le m,n\le L\\m\neq n}} 3k_n\mu_{S_n,S_m} \quad \text{     and     } \quad \rho=\max_{1\le n\le L}3k_n\mu_{S_n}\ ,
\end{equation} to arrive at a more concise expression.  By recursively applying \eqref{eq:2} to its middle term on the right hand side, and applying the bounds in \eqref{eq:ub} (and replacing $m$ with $m_1$), one obtains
\begin{align}
\label{eq:new}
    \sum_{\ell=0}^{t-1}(3\mu_{S_n}k_n)^\ell\sum_{{m_1}\neq n} 3k_n&\mu_{S_n,S_{m_1}}\|x^{(t-1-\ell)}_{S_{m_1}}-x^*_{S_{m_1}}\|_1 \le  \sum_{\ell=0}^{t-1}\rho^\ell\sum_{{m_1}\neq n} \tilde{\rho} \rho^{t-1-\ell}\|x^{(0)}_{S_{m_1}}-x^*_{S_{m_1}}\|_1 \nonumber\\
    +& \sum_{\ell=0}^{t-1}\rho^\ell\sum_{{m_1}\neq n} \tilde{\rho}\sum_{\ell_1=0}^{t-\ell-2}\rho^{\ell_1}\sum_{m_2\neq m_1}\tilde{\rho}\|x^{(t-2-\ell-\ell_1)}_{S_{m_2}}-x^*_{S_{m_2}}\|_1 \\
     +& \sum_{\ell=0}^{t-1}\rho^\ell\sum_{m_1\neq n} 3k_{m_1}\tilde{\rho}\sum_{\ell_1=0}^{t-\ell-2}\rho^{\ell_1}\|[A^*\epsilon]_{S_{m_1}}\|_\infty \label{eq:newerr}.
\end{align}
This first term can be simplified as
\begin{align}
    \sum_{\ell=0}^{t-1}\rho^\ell\sum_{m_1\neq n} \tilde{\rho} \rho^{t-1-\ell}\|x^{(0)}_{S_{m_1}}-x^*_{S_{m_1}}\|_1 = & t\tilde{\rho}\rho^{t-1}\sum_{{m_1}\neq n}  \|x^{(0)}_{S_{m_1}}-x^*_{S_{m_1}}\|_1 \ .
\end{align}
Repeating this process to recursively substitute \eqref{eq:2} in for the middle term in \eqref{eq:new} yields terms of the form
\begin{align}
\label{eq:recurse_err}
  \sum_{\ell=0}^{t-1}\rho^\ell\sum_{m_1\neq n} \tilde{\rho}\sum_{\ell_1=0}^{t-\ell-2}\rho^{\ell_1}\sum_{m_2\neq m_1}&\tilde{\rho}\cdots\hspace{-5ex}\sum_{\ell_s=0}^{t-\ell-\cdots-\ell_{s-1}-s)}\hspace{-5ex}\rho^{\ell_s}\sum_{m_s\neq m_{s-1}}\tilde{\rho}\rho^{t-\ell-\cdots-\ell_{s-1}-s} \|x^{(0)}_{S_{m_s}}-x^*_{S_{m_s}}\|_1\nonumber\\
  = &{t\choose s}\tilde{\rho}^{s}\rho^{t-s} \sum_{m_1\neq n}\sum_{m_2\neq m_1}\cdots\sum_{m_s\neq m_{s-1}} \|x^{(0)}_{S_{m_s}}-x^*_{S_{m_s}}\|_1 \nonumber \\
  = &{t\choose s}\tilde{\rho}^{s}\rho^{t-s} E_s(n) \ .
\end{align}
The only terms left to track are the error terms of the form \eqref{eq:newerr}.  Similarly, each of these terms takes the form
\begin{align}
\label{eq:recurse_eps}
    \sum_{\ell=0}^{t-1}\rho^\ell\sum_{m\neq n} \tilde{\rho}&\sum_{\ell_1=0}^{t-\ell-2}\rho^{\ell_1}\sum_{m_1\neq m}\tilde{\rho}\cdots\hspace{-5ex}\sum_{\ell_s=0}^{t-\ell-\cdots-\ell_{s-1}-s)}\hspace{-5ex}\rho^{\ell_s}\sum_{m_s\neq m_{s-1}}3k_{m_s} \|[A^*\epsilon]_{S_{m_s}}\|_\infty\nonumber \\
    \le &\|A^*\epsilon\|_\infty\frac{\tilde{\rho}^{s}}{(1-\rho)^{s+1}} \sum_{m_1\neq n}\sum_{m_2\neq m_1}\cdots\sum_{m_s\neq m_{s-1}} 3k_{m_s} \nonumber \\
    = & \frac{\tilde{\rho}^{s}}{(1-\rho)^{s+1}} \|A^*\epsilon\|_\infty K_s(n) \ ,
\end{align}
where each geometric series in $\rho$ has been bounded by its limit of $(1-\rho)^{-1}$ as $t$ goes to infinity.  For fixed $t$, the recursive terms \eqref{eq:recurse_err} and \eqref{eq:recurse_eps} range from $s=1$ to $s=t$. 
Combining this together yields
\begin{align}
    &\|x^{(t)}_{S_n}-x^*_{S_n}\|_1\le (3\mu_{S_n}k_n)^t\|x^{(0)}_{S_n}-x^*_{S_n}\|_1 +3k_n\sum_{\ell=0}^{t-1}(3\mu_{S_n}k_n)^\ell\|[A^*\epsilon]_{S_n}\|_\infty\nonumber\\
&\qquad \quad +\sum_{s=1}^{t} \left[{t\choose s}\tilde{\rho}^{s}\rho^{t-s} E_s(n) +\frac{\tilde{\rho}^{s}}{(1-\rho)^{s+1}} \|A^*\epsilon\|_\infty K_s(n)\right] \  .
\end{align}
To obtain the final version of inequality \eqref{eq:thm2}, we simply bound the error term containing $\epsilon$ on $S_n$ with its limit as $t$ goes to $\infty$.
\end{proof}

Theorem \ref{thm2} gives an error bound for each subvector of $x$, and shows that it converges linearly (to leading order) to the true solution within an error factor.  Note that while not a requirement for the theorem, the condition $\tilde{\rho}<\rho$ is necessary to consider the first two terms in \eqref{eq:thm2} the leading order terms.  This condition holds for many applications.   We state and prove the following corollary which provides an error estimate for the entire vector $x$, not only for each subvector $x_{S_n}$.

\begin{corollary}
\label{cor1}
Using the same definitions and assumptions as in Theorem \ref{thm2}, as well as the condition $\rho+(L-1)\tilde{\rho}<1$, then for all $t\ge 1$, 
\begin{align}
\label{eq:cor}
\|&x^{(t)}-x^*\|_1 \le \left(\rho+ (L-1)\tilde{\rho}\right)^t\|x^{(0)}-x^*\|_1  + \frac{3k\|A^*\epsilon\|_\infty}{1-\rho-(L-1)\tilde{\rho}}\ .
\end{align}
\end{corollary}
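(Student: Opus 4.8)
\medskip
\noindent\textbf{Proof proposal.} The plan is to bypass the full expansion in Theorem~\ref{thm2} and instead sum the one-step estimate \eqref{eq:1} directly over the index sets, collapsing the $L$ coupled inequalities into a single scalar recursion for $e^{(t)} := \|x^{(t)} - x^*\|_1$. Since $x^*$ is $k$-sparse with $\|x^*_{S_n}\|_0 = k_n$ and $\sum_n k_n = k$, its support lies in $\bigcup_n S_n$; taking (as is natural) $\bigcup_{n=1}^L S_n = \mathcal{I}_N$, disjointness gives $e^{(t)} = \sum_{n=1}^L \|x^{(t)}_{S_n} - x^*_{S_n}\|_1$, so it suffices to bound this sum.

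First I would replace the coefficients in \eqref{eq:1} by the uniform bounds from \eqref{eq:ub}, namely $3k_n\mu_{S_n} \le \rho$ and $3k_n\mu_{S_n,S_m} \le \tilde{\rho}$, and use $\|[A^*\epsilon]_{S_n}\|_\infty \le \|A^*\epsilon\|_\infty$, to get
\[
\|x^{(t)}_{S_n} - x^*_{S_n}\|_1 \le \rho\,\|x^{(t-1)}_{S_n} - x^*_{S_n}\|_1 + \tilde{\rho}\sum_{m\neq n}\|x^{(t-1)}_{S_m} - x^*_{S_m}\|_1 + 3k_n\|A^*\epsilon\|_\infty .
\]
Summing over $n = 1,\dots,L$, the key step is the interchange of summation
\[
\sum_{n=1}^L \sum_{m\neq n} \|x^{(t-1)}_{S_m} - x^*_{S_m}\|_1 = (L-1)\sum_{m=1}^L \|x^{(t-1)}_{S_m} - x^*_{S_m}\|_1 = (L-1)\,e^{(t-1)},
\]
since each fixed $m$ is counted once for each of the $L-1$ indices $n \neq m$. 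Together with $\sum_{n=1}^L 3k_n = 3k$, this yields the scalar recursion
\[
e^{(t)} \le \bigl(\rho + (L-1)\tilde{\rho}\bigr)\,e^{(t-1)} + 3k\|A^*\epsilon\|_\infty .
\]

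Finally, writing $\gamma := \rho + (L-1)\tilde{\rho}$ and using the hypothesis $\gamma < 1$, I would unroll this recursion down to $t = 0$, bounding the resulting finite geometric sum by its limit: $e^{(t)} \le \gamma^t e^{(0)} + 3k\|A^*\epsilon\|_\infty\sum_{j=0}^{t-1}\gamma^j \le \gamma^t e^{(0)} + \tfrac{3k\|A^*\epsilon\|_\infty}{1-\gamma}$, which is exactly \eqref{eq:cor}. There is no deep obstacle here; the only points requiring care are (i) verifying the combinatorial identity $\sum_n\sum_{m\neq n} = (L-1)\sum_m$ and noting that $\gamma < 1$ is precisely what makes the geometric series summable and the error term finite, and (ii) the treatment of coordinates outside $\bigcup_n S_n$, which are never thresholded: if one does not assume the $S_n$ partition $\mathcal{I}_N$, such coordinates must be estimated directly from $x^{(t+1)}_i - x^*_i = [(I - A^*A)(x^{(t)} - x^*)]_i + [A^*\epsilon]_i$, contributing an extra term of the same geometric form, which I would either fold in or rule out by imposing the partition assumption at the outset.
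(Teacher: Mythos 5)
Your proof is correct, but it does not follow the route the paper actually takes. The paper derives Corollary \ref{cor1} by summing the full conclusion \eqref{eq:thm2} of Theorem \ref{thm2} over $n$, evaluating the combinatorial sums $\sum_{n=1}^L E_s(n)=(L-1)^s\|x^{(0)}-x^*\|_1$ and $\sum_{n=1}^L K_s(n)=(L-1)^s\,3k\|A^*\epsilon\|_\infty$, and then recognizing $\sum_{s=0}^{t}\binom{t}{s}\tilde{\rho}^{s}\rho^{t-s}(L-1)^s=(\rho+(L-1)\tilde{\rho})^t$ via the binomial theorem, together with a geometric series for the noise terms. You instead collapse the $L$ coupled one-step inequalities \eqref{eq:1} into a single scalar recursion for $e^{(t)}=\|x^{(t)}-x^*\|_1$ using the interchange $\sum_n\sum_{m\neq n}=(L-1)\sum_m$, and unroll. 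This is precisely the ``more direct manner'' that the authors mention in the remark immediately after their proof but do not carry out; your version is shorter and avoids the multinomial bookkeeping of \eqref{eq:recurse_err} and \eqref{eq:recurse_eps}, at the cost of not passing through the per-set estimates that Theorem \ref{thm2} provides (which the authors value for the qualitative information that sets with smaller $\mu_{S_n}k_n$ converge faster). Your point (ii) about coordinates outside $\bigcup_n S_n$ is well taken: the identity \eqref{eq:simple} as used in the paper tacitly requires that $x^{(t)}-x^*$ vanish off $\bigcup_n S_n$, which is not automatic since the gradient step touches every coordinate and the thresholding operators leave entries outside the $S_j$ unchanged; imposing $\bigcup_n S_n=\mathcal{I}_N$ (or initializing and arguing as you indicate) closes a gap that the paper's own proof leaves implicit.
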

\begin{proof}
As the $S_n$ are disjoint and all values for which $x^*\neq0$ live in some $S_n$, 
\begin{equation}
\label{eq:simple}
\|x^{(t)}-x^*\|_1 = \sum_{n=1}^N \|x_{S_n}^{(t)}-x^*_{S_n}\|_1 \ .
\end{equation}
By applying this equality to the conclusion \eqref{eq:thm2} of Theorem \ref{thm2}, we have
\begin{align}
\label{eq:cor1}
\|x^{(t)}-x^*\|_1 \le  &\sum_{n=1}^L (3\mu_{S_n} k_n)^t\|x_{S_n}^{(0)}-x^*_{S_n}\|_1 + \sum_{n=1}^L\frac{3k_n}{1-3\mu_{S_n} k_n}\|[A^*\epsilon]_{S_n}\|_\infty \nonumber\\
&+\sum_{n=1}^L\sum_{s=1}^{t}{t\choose s}\tilde{\rho}^{s}\rho^{t-s} E_s(n)
+ \sum_{n=1}^L\sum_{s=1}^{t}\frac{\tilde{\rho}^{s}}{(1-\rho)^{s+1}} K_s(n)\|A^*\epsilon\|_\infty \ .
\end{align}
The terms on the first line of \eqref{eq:cor1} can be bounded by 
\begin{align}\label{eq:cor_bound1}
    \sum_{n=1}^L (3&\mu_{S_n} k_n)^t\|x_{S_n}^{(0)}-x^*_{S_n}\|_1 + \sum_{n=1}^L\frac{3k_n}{1-3\mu_{S_n} k_n}\|[A^*\epsilon]_{S_n}\|_\infty \le \nonumber\\ &\rho^t \sum_{n=1}^L\|x_{S_n}^{(0)}-x^*_{S_n}\|_1 + \frac{\|A^*\epsilon\|_\infty}{1-\rho}\sum_{n=1}^L 3 k_n = \rho^t \|x^{(t)}-x^*\|_1 +\frac{3k\|A^*\epsilon\|_\infty}{1-\rho} \ .
\end{align}
Turning our attention towards the two terms on the second line of \eqref{eq:cor1}, by a counting argument, the sums of $K_s(n)$ and $E_s(n)$ can be computed as 
\begin{align}\label{eq:Kns_def}
   & \sum_{n=1}^L E_s(n) = \sum_{n=1}^L \sum_{m_1\neq n}\sum_{m_2\neq m}\cdots\sum_{m_s\neq m_{s-1}} \|x^{(0)}_{S_{m_s}}-x^*_{S_{m_s}}\|_1=(L-1)^{s}\|x^{(0)}-x^*\|_1 \nonumber \\
   & \sum_{n=1}^L K_s(n) = \sum_{n=1}^L\sum_{m_1\neq n}\sum_{m_2\neq m}\cdots\sum_{m_s\neq m_{s-1}} 3k_{m_s}\|A^*\epsilon\|_\infty = (L-1)^{s}3k\|A^*\epsilon\|_\infty \ . 
\end{align}
Substituting \eqref{eq:cor_bound1} and \eqref{eq:Kns_def} into \eqref{eq:cor1} yields
\begin{align}
\label{eq:cor11}
\|x^{(t)}-x^*\|_1 \le  \sum_{s=0}^{t}{t\choose s}\tilde{\rho}^{s}\rho^{t-s} (L-1)^s\|x^{(0)}-x^*\|_1\  +\frac{3k\|A^*\epsilon\|_\infty}{1-\rho}\sum_{s=0}^{t}\left(\frac{(L-1)\tilde{\rho}}{1-\rho} \right)^s \ ,
\end{align}
where the terms from \eqref{eq:cor_bound1} have been included in the summations, which now start from $s=0$.  The first summation can be rewritten as $(\rho+(L-1)\tilde{\rho})^t$ by the binomial theorem.  Under the assumptions that $\rho+(L-1)\tilde{\rho}<1$, the remaining geometric series converges.  Bounding this series by its limit yields the desired form in  \eqref{eq:cor}.  
\end{proof}
Hence, similar to IHT from Theorem \ref{thm1}, we have linear convergence.  The rate of convergence for structured IHT is $\rho+(L-1)\tilde{\rho}$.  If $\tilde{\rho}=\rho$, this rate reduces to $L\rho=3k\mu$, which is the rate of convergence for IHT.  Whenever $\tilde{\rho}<\rho$ (which is true in many applications), the provided theory expects faster convergence for structured IHT.  This also yields a stronger guarantee of convergence, as $\rho+(L-1)\tilde{\rho}$ can be less than 1 even if $3\mu k$ is not.  
In the following section, we will see that in a typical scenario, the theory of structured IHT as given by Corollary \ref{cor1} can give significantly stronger theoretical guarantees than the IHT theory from Theorem \ref{thm1}. 
Note that in the case when we have only one index set $S_1$ which includes all indices of $x$, \eqref{eq:cor} reduces to the result of Theorem \ref{thm1} by setting $L=1$.

We remark that the result of Corollary \ref{cor1} can be obtained in a more direct manner from immediately bounding all relevant terms in \eqref{eq:1} by $\rho$ and $\tilde{\rho}$ and subsequently summing both sides from 1 to $L$.  However, we have chosen to state Theorem \ref{thm2} which gives additional information (to leading order) for each index set.  In particular, index sets with smaller values of $\mu_{S_n}k_n$ will converge faster.  Moreover, if $\|[A^*\epsilon]_{S_n}\|_\infty$ is smaller, there will be less noise introduced to the reconstruction of $x_{S_n}^*$).  

Lastly, it is important to note that, while the theory points to a benefit of using structured IHT, it may be prohibitive to obtain the structured sparsity information needed to apply the theory of structured IHT.  Much of compressed sensing theory, in general, requires the overall sparsity level to be known.  Even when this level is unknown, there are methods for estimating the true sparsity level \cite{Lopes13,7506083}.  While it is more difficult to estimate the individual sparsity level for each index set, there are still methods for determining the structure of the sparsity \cite{shervashidze2015learning,wen2016learning}.  In the following numerical simulations, we will look at some heuristic methods for estimating the sparsity level for each index set, but note that this can be more difficult in other applications.

\section{Numerical Simulations}
\label{sec:ongrid}
\subsection{Inverse Source Problem Setup}
To demonstrate the practicality and performance of the structured IHT algorithm, we apply it to an inverse source problem (ISP) where the source is known to be a superposition of a small number of plane waves.  This problem shares strong similarities with direction-of-arrival (DOA) and angle-of-arrival (AOA) type problems in the literature, for which many efficient and accurate algorithms exist \cite{shan1985spatial,stoica1990maximum,
tuncer2009classical,yang2018sparse,yang2012off}.  However, in the considered ISP, the amplitudes of the plane waves are also unknown, as opposed to only solving for the incident angles.  For simplicity, we also consider detector geometries that are unlikely in a typical DOA or AOA setting.

Consider $k$ unknown sources that are emitted simultaneously.  We assume each source $s_j$ is a plane wave of the form
\begin{equation}
s_j(\mathbf{r})=a_j e^{i\omega\mathbf{r}\cdot\Theta_j} \ .
\end{equation}
Each source is thus completely characterized by its  frequency $\omega$, amplitude $a_j$, and incident angle $\Theta_j$.  We assume that the frequency is known and constant for all sources, with the only unknowns being the amplitudes and incident angles.  In the following simulations, the true amplitudes $a_j$ will be real, but this will not be assumed to be true {\it a priori}.   The field $u$ arriving at any point $\mathbf{r}\in\mathbb{R}^3$ is given by the superposition of these simultaneously emitted sources by
\begin{equation}
\label{eq:isp_1}
u(\mathbf{r})=\sum_{j=1}^k a_j e^{i\omega\mathbf{r}\cdot\Theta_j} \ .
\end{equation}
By making measurements of the field $u$ at known detector locations $\mathbf{d}_1,\dots\mathbf{d}_M$, the goal is to recover the values $a_j$ and $\theta_j$ for all $1\le j \le k$.   In the simplest version of this ISP, the value $k$ is known.  We will consider the problem when $k$ is unknown in Section \ref{sec:offgrid}.  

To apply the structured IHT algorithm to this ISP, the problem needs to be first formulated as a sparse linear recovery problem.  One could equally try one of many other sparsity-promoting algorithms for solving this problem, but in this paper, it will be used as an example to demonstrate the practicality of structured IHT.  We remark that there are also approaches in the DOA and AOA literature that do not rely on sparsity \cite{shan1985spatial,stoica1990maximum,
tuncer2009classical}.    

To write this as a linear system, the measurements at the detector locations $\mathbf{d}_m$ in \eqref{eq:isp_1} can be written as the $M\times k$ linear system
\begin{equation}
\label{eq:isp_2}
\begin{bmatrix}
e^{i\omega \mathbf{d}_1\Theta_1} &\cdots&  e^{i\omega \mathbf{d}_1\Theta_k} \\
\vdots & & \vdots \\
e^{i\omega \mathbf{d}_M\Theta_1} &\cdots&  e^{i\omega \mathbf{d}_M\Theta_k} 
\end{bmatrix}
\begin{bmatrix}
a_1 \\ \vdots \\ a_k
\end{bmatrix}=
\begin{bmatrix}
u(\mathbf{d}_1)\\ \vdots \\ u(\mathbf{d}_M)
\end{bmatrix} \ .
\end{equation}
However, as the true incident angles $\Theta_j$ are unknown, the matrix in \eqref{eq:isp_2} is also not known.  In the on-grid scenario (the off-grid case is considered in Section \ref{sec:offgrid}), we assume each $\Theta_j$ is equal to one of $N$ test angles, $\Phi_1,\dots,\Phi_N$, where typically $N\gg M> k$.  We now rewrite \eqref{eq:isp_2} in the form 
\begin{equation}
\label{eq:isp_3}
\begin{bmatrix}
e^{i\omega \mathbf{d}_1\Phi_1} &e^{i\omega \mathbf{d}_1\Phi_2}&\cdots&  e^{i\omega \mathbf{d}_1\Phi_N} \\
\vdots && & \vdots \\
e^{i\omega \mathbf{d}_M\Phi_1} &e^{i\omega \mathbf{d}_M\Phi_2}&\cdots&  e^{i\omega \mathbf{d}_M\Phi_N}
\end{bmatrix}
\begin{bmatrix}
x_1 \\ \vdots\\\vdots \\ x_N
\end{bmatrix}=
\begin{bmatrix}
u(\mathbf{d}_1)\\ \vdots \\ u(\mathbf{d}_M)
\end{bmatrix} \ .
\end{equation}
This now underdetermined system contains the same information as \eqref{eq:isp_2}, but takes into account that the true values of $\Theta_j$ are unknown.  However, as there are only $k$ values of $\Theta$, there should only be $k$ nonzero values of $x_j$.  If $\Theta_j=\Phi_{j'}$ for some $j$ and $j'$, then $x_{j'}=a_j$.  If $\Phi_{j'}$ is not equal to any $\Theta_j$, then $x_{j'}=0$.  As $M<N$, this is a standard underdetermined linear sparse recovery problem.    We denote this linear system \eqref{eq:isp_3} by $Ax=b$, where $A$ is an $M\times N$ matrix and $x$ is an $N\times 1$ vector that is $k$-sparse.

\subsection{Comparison to Theory}
\label{sec:comptotheory}
The theory of solving this sparse linear system by structured IHT centers around the coherence of $A$.  We can estimate the coherence for a fixed geometry of detector locations.  For the remainder of the paper, the detector locations $\mathbf{d}_m$ will be assumed to be uniformly spaced on a sphere centered at the origin.

Consider two columns $A_j$ and $A_\ell$ from \eqref{eq:isp_3}.  Computing the coherence by \eqref{eq:mu_def}, we have
\begin{align}
\label{eq:coh1}
\frac{\langle A_\ell, A_j\rangle}{\|A_\ell\|\|A_j\|} =\frac{1}{M} \sum_{m=1}^M e^{i\omega\mathbf{d}_m\Phi_\ell} e^{-i\omega\mathbf{d}_m\Phi_j}  \ .
\end{align}
Assuming that the detector locations $\mathbf{d}_m$ are uniformly spaced on a sphere of radius $R$ centered at the origin, for sufficiently large values of $M$ the coherence \eqref{eq:coh1} is approximated by
\begin{equation}
\frac{\langle A_\ell, A_j\rangle}{\|A_\ell\|\|A_j\|}\approx \frac{1}{4\pi}\int_{S^2} e^{i\omega R \mathbf{\hat{d}}\cdot (\Phi_\ell-\Phi_j)} \ d\mathbf{\hat{d}} = \frac{\sin(\omega R|\Phi_\ell-\Phi_j|)}{\omega R|\Phi_\ell-\Phi_j|} \ ,
\end{equation} 
where the integral is taken over all unit vectors $\mathbf{\hat{d}}$ on the unit sphere $S^2$.  Letting $h$ be the minimum distance between any $\Phi_\ell\in S$ and $\Phi_j\in S'$, the restricted coherence of $A$ can be approximated by
\begin{equation}
\label{eq:coh_A}
\mu_{S,S'}(A)=\max_{\substack{\ell\in S, j\in S'\\\ell\neq j}}\frac{\langle A_\ell, A_j\rangle}{\|A_\ell\|\|A_j\|} \approx \frac{\sin(\omega hR)}{\omega hR} \ . 
\end{equation}
This approximation is accurate for small values of $\omega h R$, but becomes less accurate as  $\omega h R$ increases.  Note that $\omega R$ is a fixed parameter that depends on the detector placements and the known frequency.  In general, an ISP with a  larger value of $\omega R$ tends to have greater capacity for resolution due to the higher frequency.  This is accompanied by a smaller coherence value.  Equation \eqref{eq:coh_A} provides some justification for the restriction that $(L-1)\tilde{\rho}<1-\rho$ in the statement and proof of Corollary \ref{cor1}. As will be seen in the numerical simulations, it is natural for each index set $S$ to include incident angles that are in close proximity with one another.  Thus, the minimum distance $h$ between two source locations within a set $S$ will often be significantly smaller than the minimum distance $h'$ between one angle in $S$ and another in $S'$.  Thus, according to \eqref{eq:coh_A}, as long as $\omega R$ is not too large, and $L$ is not too large, this condition will hold.

We first conducted a simple numerical experiment to compare the performance of the algorithm with these coherence approximations within the context of the theory presented in Section \ref{sec:analysis}.
Consider the case of $k=3$ plane wave sources, all with unit amplitude $a_j=1$.  We write any incident angle using spherical coordinates as $ \Theta_j=(\cos\theta_j\sin\phi_j,\sin\theta_j\sin\phi_j,\cos\phi_j)$, where $\theta_j\in[0,2\pi)$ and $\phi_j\in[0,\pi]$.  Consider a simple problem where the incident angle of any source is known to have fixed azimuthal angle  of $\phi=\pi/2$.   We thus create a one dimensional grid in $\theta$, with $N$ uniformly spaced values between 0 and $2\pi$.  This gives the candidate directions $\Phi_n$  as 
\begin{equation}
\label{eq:1d_angles}
\Phi_n = (\cos (2\pi n/N), \sin (2\pi n/N), 0) \ .
\end{equation}
The value of $N$ was chosen to be 200, with the $k=3$ nonzero values (with unit amplitude) placed at $n=104, 106$, and 165.  
The number of detectors chosen was $M=100$, which makes this problem underdetermined.  The detectors were placed uniformly about a sphere, with the dimensionless parameter set $\omega R=275$.  The placement of the detectors was performed according to the Fibonacci spiral method \cite{hardin2016comparison}.  No noise was added to the data.  

Two different sparse recovery algorithms were run on this setup to recover the 3 plane wave sources.  The first algorithm was the IHT algorithm, with thresholding performed on the $k=3$ largest entries.  This was then compared with the structured IHT algorithm.  The chosen structure consisted of knowing that there were $k_1=2$ nonzero entries on the range of $n$ values with $98\le n \le 112$ and $k_2=1$ nonzero entries on the range of $n$ values $159 \le n \le 171$.  These two sets of indices made up $S_1$ and $S_2$ respectively.  We remark that this structure was chosen arbitrarily for this toy problem, but has some plausible basis in the fact that there are two sources close together (contained in $S_1$), with a single isolated source that should be easier to recover (contained in $S_2$).
 
 With these chosen parameters, the coherence of $A$ is estimated by \eqref{eq:coh_A} as $0.081879$ which was numerically computed to be $0.081885$ (both displayed to 5 significant digits).  As expected, the coherence values when restricted to either $S_1$ or $S_2$ are nearly identical (0.081881 and 0.081884), as the value of $h=0.031$ in \eqref{eq:coh_A} is unchanged.  The coherence $\mu_{S_1,S_2}(A)$ is roughly four times smaller at 0.019788.  This value does not tightly follow the approximation given by \eqref{eq:coh_A} as the two closest angles between $S_1$ and $S_2$ are $\Phi_{112}$ and $\Phi_{159}$ which are separated by $h=1.32$.   In this case, $\omega h R$ is too large for \eqref{eq:coh_A} to be accurate.  A visual for these coherence values is shown below in Fig.~\ref{fig:theory_coh}.  As $L=2$, the convergence condition $3(0.081884)(2) + (2-1)(3)0.019788(1) =0.550668<1$ holds.
 \begin{figure}[h!]
\centering
\includegraphics[width=0.59\textwidth]{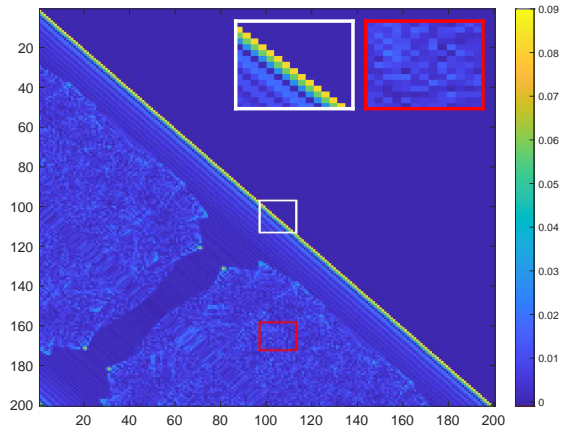}
\caption{ \label{fig:theory_coh} Visual of the values $\langle A_j,A_\ell\rangle/(\|A_j\|\|A_\ell\|)$ for the matrix $A$ given by \eqref{eq:isp_3} with $\omega R=275$.   Only the values for $j<\ell$ are plotted as the terms are symmetric.  For the index sets $S_1=\{98,\dots,112\}$ and $S_2=\{159,\dots,171\}$, the restricted coherences $\mu_{S_1}$ and $\mu_{S_1,S_2}$ are restricted to the regions in the white and red boxes, respectively.  Larger magnifications of these regions  are shown in the upper right hand corner.  For this setup, $\mu=0.081885$, $\mu_{S_1}=0.081881$, and $\mu_{S_1,S_2}=0.019788$.
} 
\end{figure}

The $\ell_1$ norm error of the three algorithms are plotted in Fig.~\ref{fig:theory_err}.  The theoretical guarantees for the two algorithms from \eqref{eq:result_iht} and \eqref{eq:cor} are plotted in dashed lines.   The vast outperformance of the algorithm compared to theory is expected, as coherence bounds reflect worst possible cases.  In practice, we expect to beat these bounds, hopefully by a significant margin.  The entire experiment was conducted again after setting the dimensionless parameter to be smaller at $\omega R=88$, with similar results.  The notable exception in this case is that the theory for IHT does not guarantee convergence whereas the structured IHT theory does.  This is because $\mu\approx\rho=0.133$ while $\tilde{\rho}=0.0076$.  For IHT, $3\mu k = 3(0.133)(3)= 1.197 >1$, whereas for structured IHT, $\rho+(L-1)\tilde{\rho}=0.821<1$. These results are also plotted in Fig.~\ref{fig:theory_err}. 

\begin{figure}[h!]
\centering
\begin{subfigure}[b]{0.49\textwidth}
                \centering
                \includegraphics[width=\textwidth]{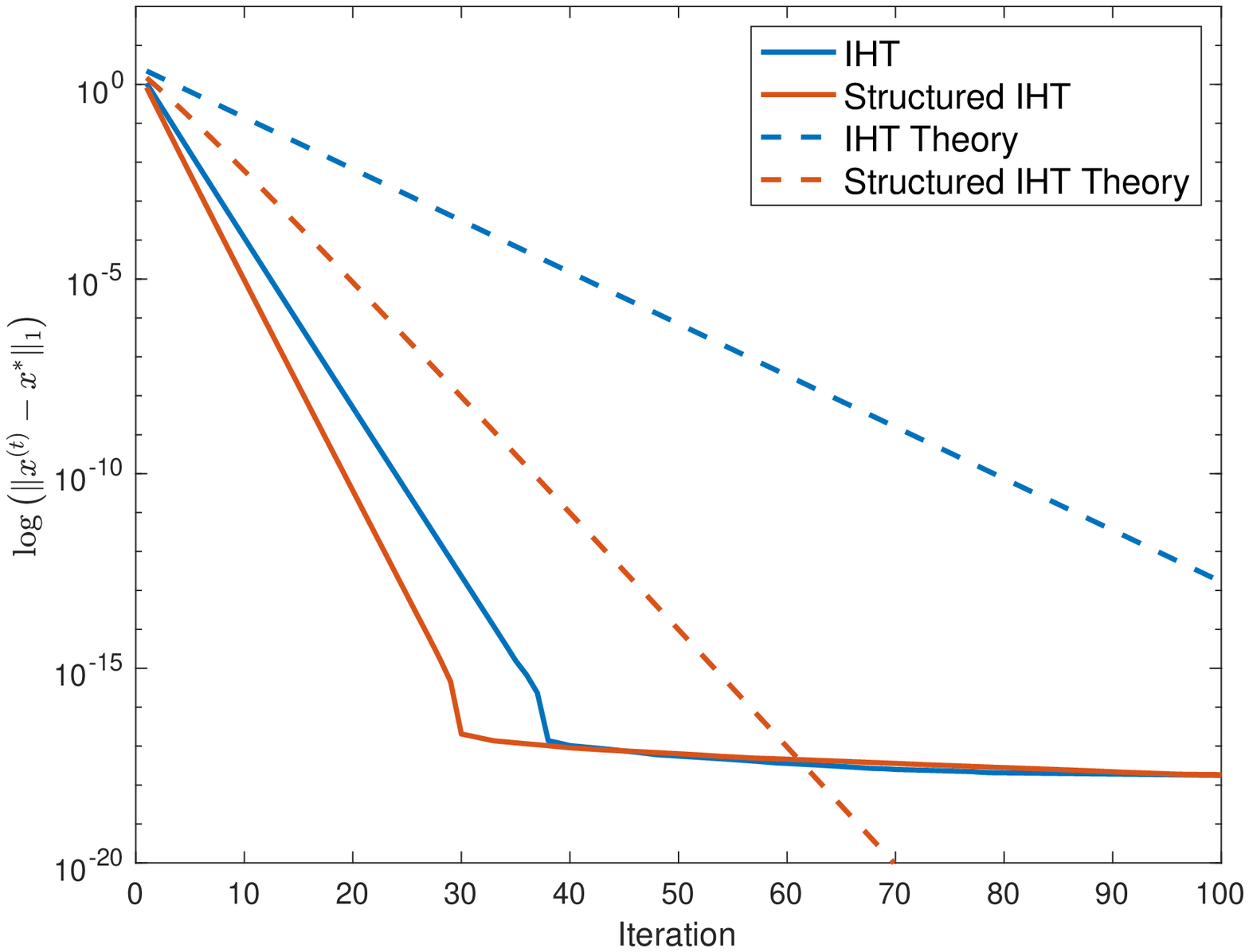}
                \caption{$\omega R=275$}
        \end{subfigure}
\begin{subfigure}[b]{0.49\textwidth}
                \centering
                \includegraphics[width=\textwidth]{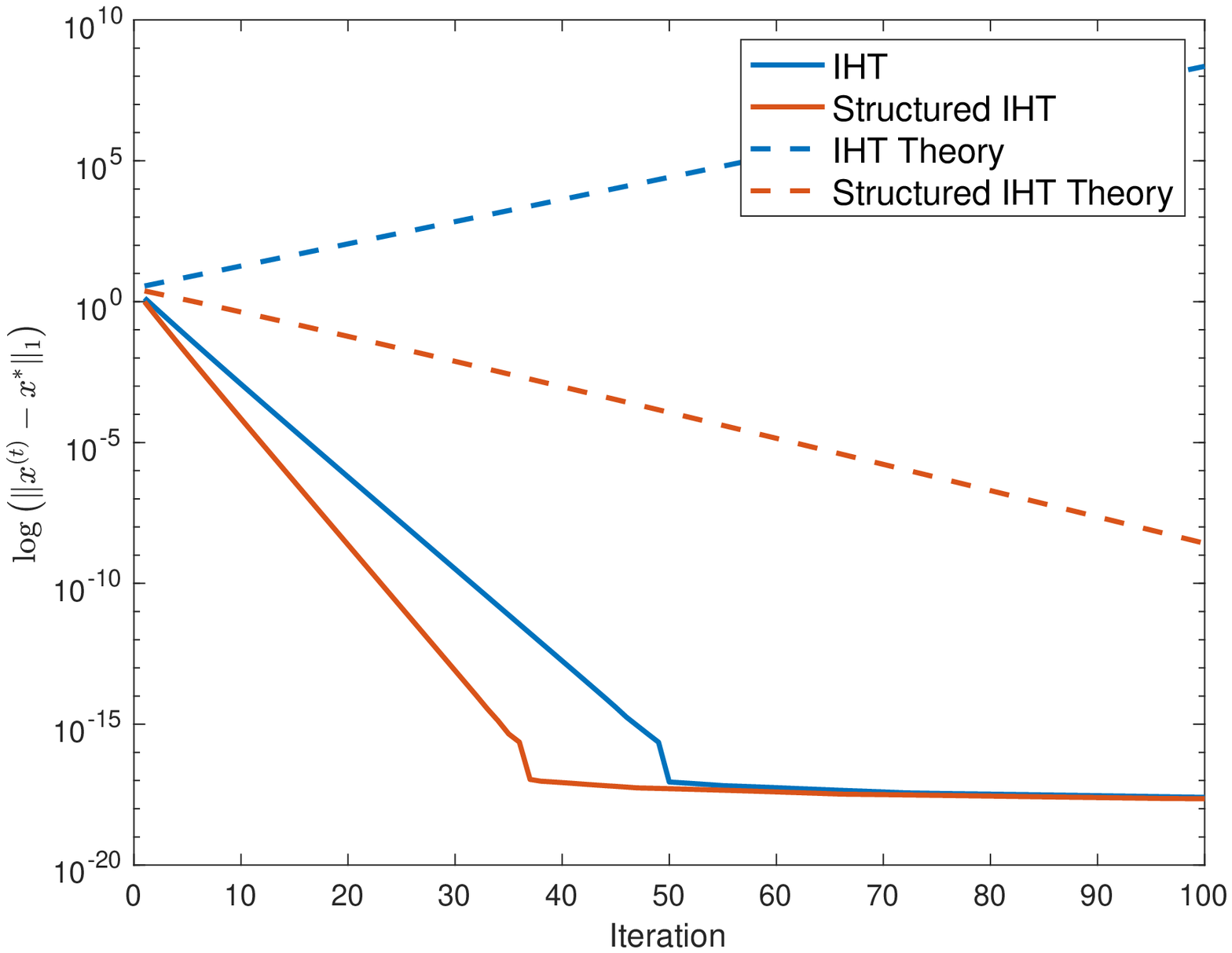}
                \caption{$\omega R=88$}
        \end{subfigure}
\caption{ \label{fig:theory_err} Comparison of the error $\|x^{(t)}-x^*\|_1$ for varying numerical simulations and theory. The left plot has parameter $\omega R=275$ with the right plot having $\omega R=88$. The blue solid line represents the numerical simulation of running the IHT algorithm.  The red orange solid line is for the structured IHT algorithm.  The corresponding dashed lines represent the theoretical guarantees for IHT, given by \eqref{eq:result_iht}, and structured IHT, given by \eqref{eq:cor}.   Note that for the plot on the right, structured IHT is guaranteed to converge while IHT is not.  However, both algorithms converged in practice.  }

\end{figure}

\subsection{On-Grid Numerical Simulations}
Building off of the numerical results of the simple experiment in Section \ref{sec:comptotheory}, we conduct numerical simulations in regimes beyond the presented theory.  Even though we expect structured IHT algorithm to outperform its theory, one main question to be answered is will it significantly outperform IHT without additional structure information?  And if so, how can one choose the structured index sets in practice on which we conduct the structured IHT algorithm? 

Our next numerical simulation addresses this first question on how structured IHT compares to IHT in practice.  We repeat the numerical experiment from Section \ref{sec:comptotheory} with fixed azimuthal angle $\phi=\pi/2$.  The number of candidates sources was increased to $N=1000$ in \eqref{eq:1d_angles}, as well as an increase in the number of detectors to $M=400$, which still left the problem significantly underdetermined.  The dimensionless parameter remained at $\omega R=275$.  This $400\times 1000$ sensing matrix $A$ then acted on a $k$-sparse vector $x$.  The $k$ nonzero entries of $x$ were chosen uniformly random, with each nonzero entry given unit amplitude.    

Five different algorithms were run on this setup: the standard IHT algorithm, and 4 versions of the structured IHT algorithm with varying structures.  For the structured IHT algorithm, the underlying vector was split into $L$ index sets of uniform size.  The index sets were given by
\begin{equation}
    S_j = \left\{\frac{(j-1)N}{L}+1,\dots,\frac{jN}{L} \right\} \qquad ,\qquad 1\le j \le L\ .
\end{equation}
The structured IHT algorithm was run separately for $L=2, 5, 10$, and $20$ divisions.  For each chosen structure, the correct sparsity level for $k_j$ for each index set $S_j$ was computed by inspection of the randomly generated vector $x$.  Note that this setup is already moving away from the optimal theory. In this setup, $\mu(A)\approx\mu_{S_j}(A)\approx\mu_{S_j,S_{j+1}}(A)$ for all $j$ as the index sets $S_j$ and $S_{j+1}$ contain neighboring sources of minimum spacing.  However, for $L>3$, one can find a $j$ and $j'$ such that $\mu_{S_j,S_{j'}}<\mu$. 

These five algorithms were run on 2000 simulations of uniformly randomly chosen vectors $x$.  A simulation was counted as successful if it recovered the support of $x$ exactly for all $k$ entries.  The results for values of $k$ between 5 and 100 are shown below in Fig.~\ref{fig:success}.  There is a substantial improvement when introducing known structure, even though the coherence theory does not fully apply.  It is clear from this experiment that additional structure helps in recovery. This entire experiment was then repeated with the additional step of adding Gaussian white noise to the data, at a level of $5\%$ with respect to the data.  The results are also shown in Fig.~\ref{fig:success}, where the change in probability of recovery (relative to the original probability $P^{(0)}$ in (A)) is plotted. This was defined as 
\begin{equation}
    \label{eq:rel_success}
    \delta P = \frac{P^{(noise)}-P^{(0)}}{P^{(0)}} \ ,
\end{equation}
where $P^{(noise)}$ denotes the probability of recovery when 5\% noise was added.
One can see the robustness of both IHT and structured IHT with respect to noise, but most importantly, we see that having refined structure ($L=20$) is more robust to noise compared to having less structure information $(L\le5)$.  We note that the implementation of structured IHT used did not employ parallelization, as there was not much benefit for the small number of groups.  Each iteration of structured IHT runs quickly (about 0.005 seconds) on a reasonable workstation.
 \begin{figure}[h!]
\centering
\begin{subfigure}[b]{0.49\textwidth}
                \centering
                \includegraphics[width=\textwidth]{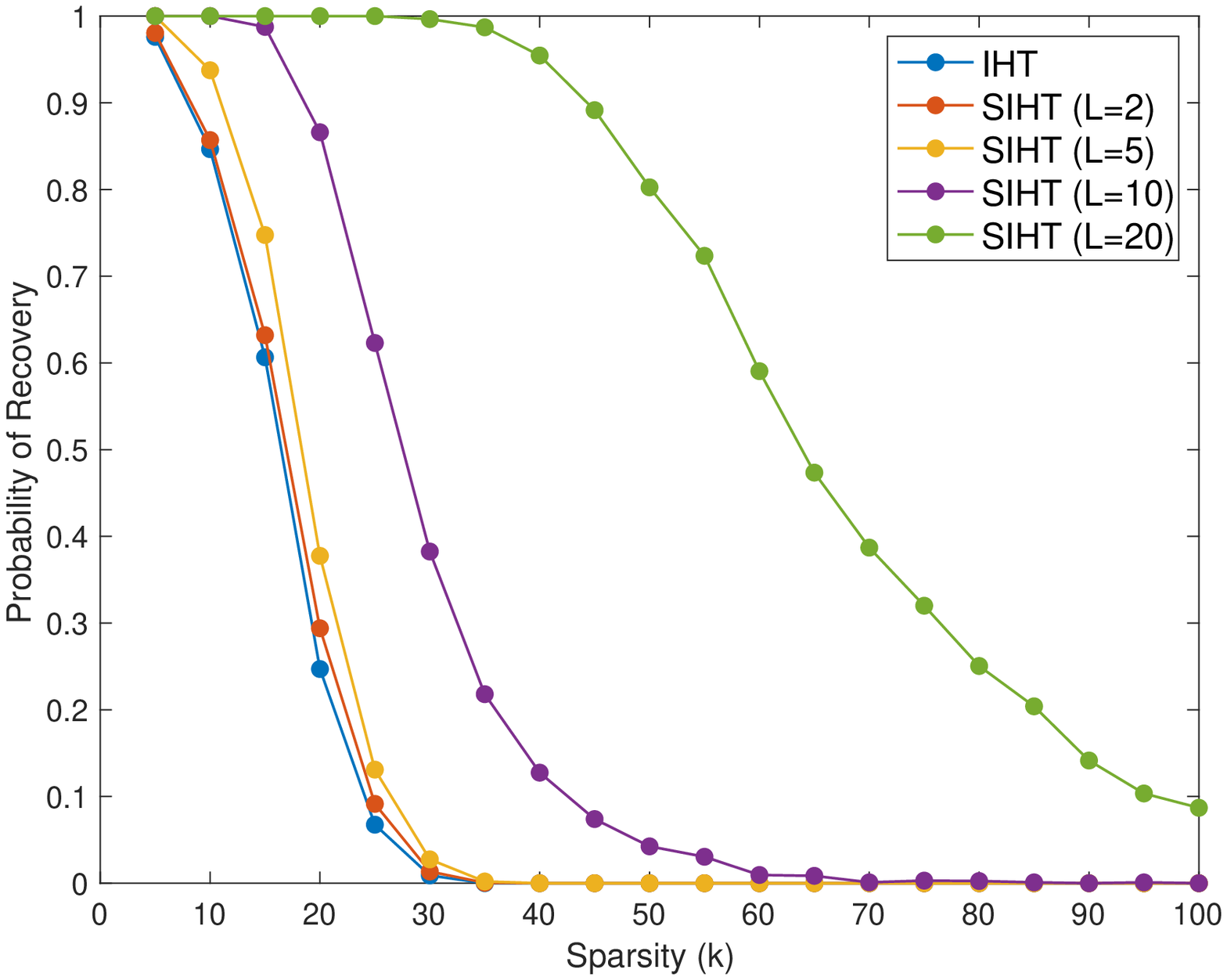}
                \caption{$P^{(0)}$ (no noise)}
        \end{subfigure}
\begin{subfigure}[b]{0.49\textwidth}
                \centering
                \includegraphics[width=\textwidth]{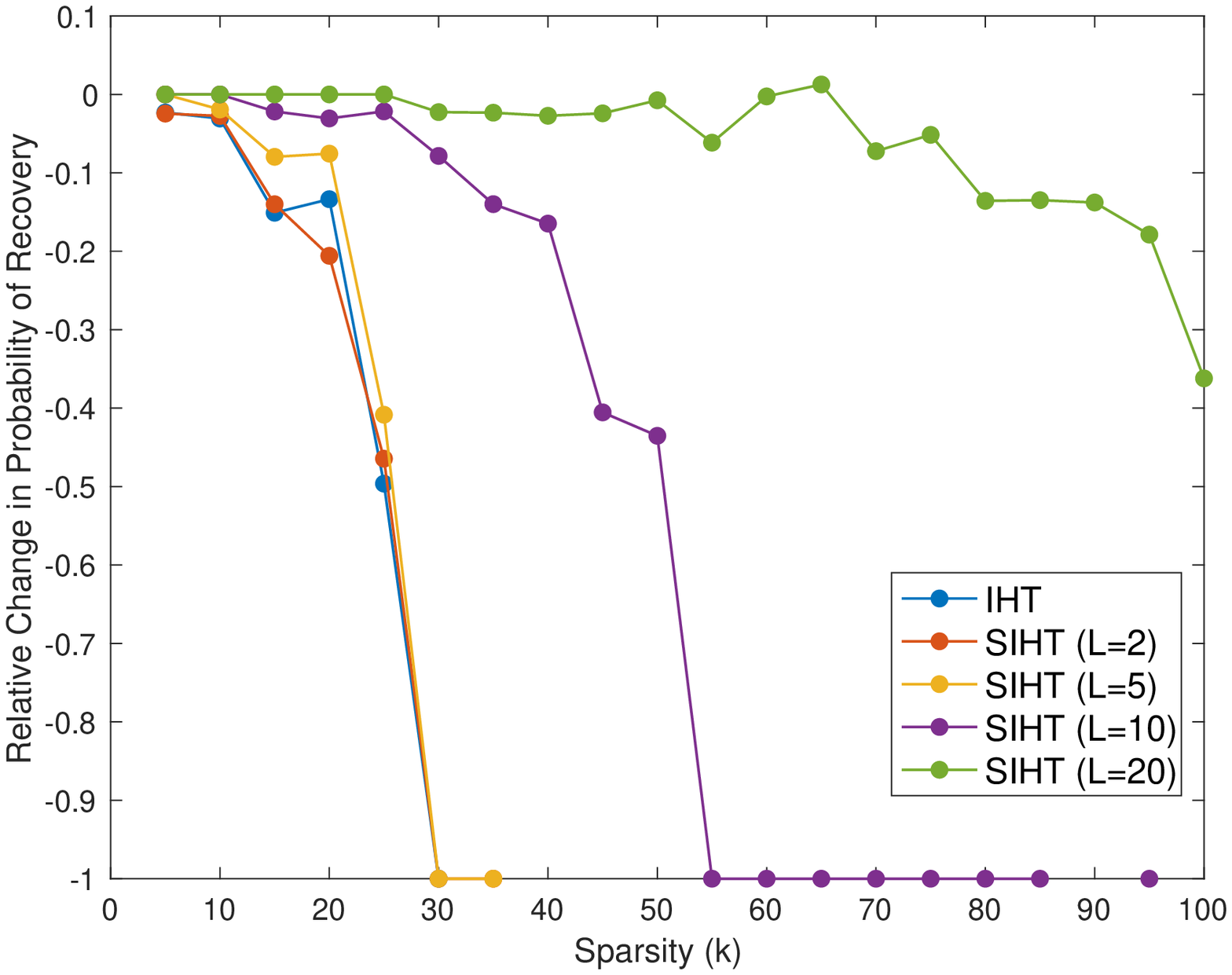}
                \caption{$\delta P$ (5\% noise)}
        \end{subfigure}
\caption{ \label{fig:success} Plots of the probability of successful recoveries for 2000 simulations.  A recovery was counted as successful when the support was recovered exactly.  The plots compare 5 algorithms: IHT and structured IHT (with varying $L$) where the vector is decomposed into $L$ components.  The plot on the left had no noise added to the data.  The plot on the right depicts the relative change in this probability $\delta P$ according to \eqref{eq:rel_success} when Gaussian white noise was added at a level of 5\% relative to the data.  Markers at -1 indicate that the noiseless case had a nonzero probability while the case with noise had a 0 probability.  No markers are shown for cases when both algorithms had a 0\% probability in the simulation.
} 
\end{figure}

Having seen evidence that structured IHT can offer significant improvements over IHT, we now address the second question on how the structure can be chosen in practice.  One key concept is to add a preprocessing step before running the structured IHT algorithm. In the following examples, we will use an initial least-squares recovery (without any sparsity constraints) to help form the underlying structure of the unknown vector.  This combined with underlying knowledge of the problem can help form structures so that structured IHT applies.  We remark that in some cases without any heuristics or additional {\it a priori} information, it may be more difficult to create a reasonable structure for structured IHT.  

We consider the same geometric setup, but remove the simplification of the azimuthal angle $\phi_j$ being fixed at $\pi/2$.  We thus create a 2D grid in $(\theta,\phi)$ to represent the candidate angles $\Phi$.  This is done by taking uniform grids of $N_1$ angles in $\theta$ and $N_2$ angles in $\phi$.  With $N=N_1N_2$, the candidate angles $\Phi_j$ are given by
\begin{align}
\label{eq:2Dgrid_angles}
&\Phi_j=(\theta_m,\phi_n) \ \ ; \ \  j=m+N_1(n-1)  \nonumber \\
&\quad\theta_m =  2\pi m/ N_1 \ \ , \ \  1\le m \le N_1  \nonumber \\
&\quad\phi_n=  \pi n/ N_2 \ \ , \ \  1\le n \le N_2 \ .
\end{align}
We set $N_1=40$ and $N_2=20$, for a total of $N=800$ candidate source angles with $M=100$ detectors placed uniformly about the sphere.  The dimensionless parameter was reduced to $\omega R=10$, which equates to lowering the frequency and decreasing resolution.  No noise was added to the data for this underdetermined problem.  

For the first experiment, $k=5$ sources were given incident angles that were relatively spaced out, with varying amplitudes between 0.4 and 1.  This model is shown in the leftmost plot of Fig.~\ref{fig:ongrid1_model}.  The next plot shows the regularized least squares solution to this underdetermined problem.  While the recovered amplitudes are significantly smaller than the true values in this least squares solution, one can make out 5 separate regions of intensity.  If the sparsity level is known to be $k=5$, it is reasonable to assume that there is one true incident angle in each of these regions.  To this end, we create a thresholded mask that keeps all reconstructed values larger than 7.5 times the mean value in the reconstruction (here, the threshold value was 0.0297).  We remark that this is just one heuristic for choosing a threshold, which could potentially fail for other examples.  However, it worked consistently in our numerical simulations, as all values were kept that were substantially above the mean.  These are the values that most likely contain the support of  a sparse solution.  We see that by using simple post-processing on the least squares solution, we were able to obtain 5 reasonable sets on which to conduct structured IHT.  

\begin{figure}[h!]
\centering
\begin{subfigure}[b]{0.3\textwidth}
                \centering
                \includegraphics[width=\textwidth,trim={4.5cm 0 5cm 0},clip]{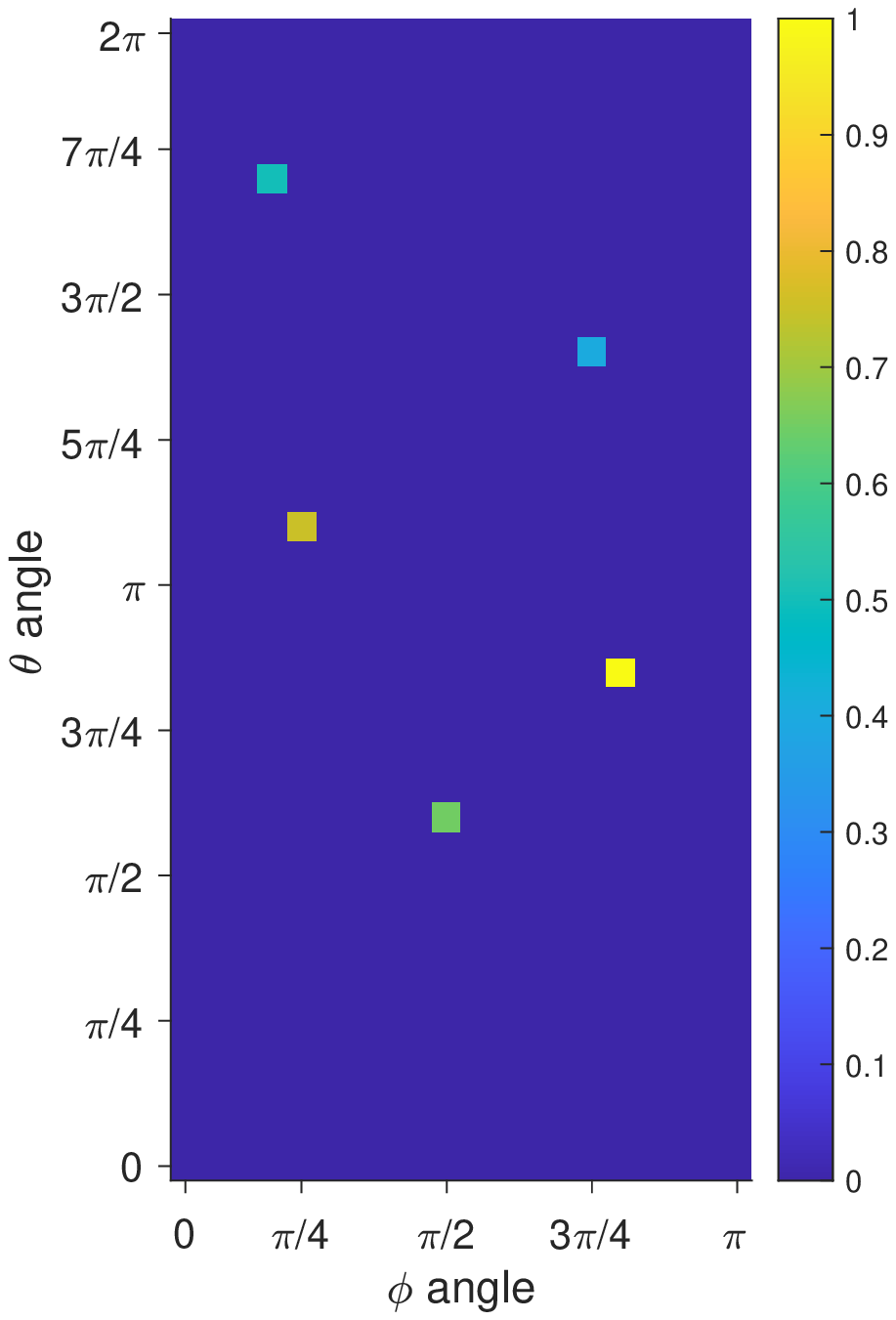}
                \caption{Model 1}
        \end{subfigure}
\begin{subfigure}[b]{0.3\textwidth}
                \centering
                \includegraphics[width=\textwidth,trim={4.5cm 0 5cm 0},clip]{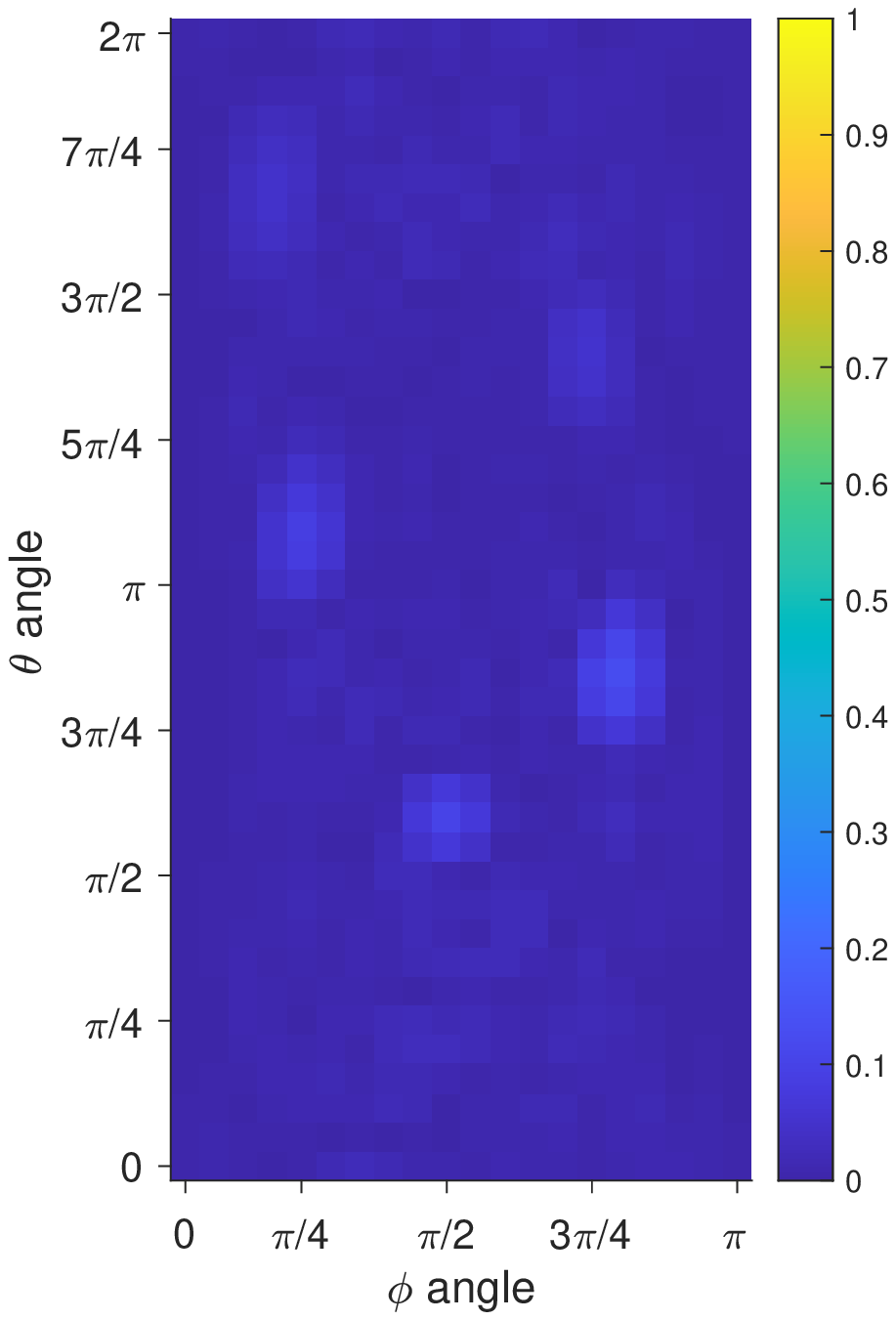}
                \caption{$L^2$ Reconstruction}
        \end{subfigure}
        \begin{subfigure}[b]{0.3\textwidth}
                \centering
                \includegraphics[width=\textwidth,trim={4.5cm 0 5cm 0},clip]{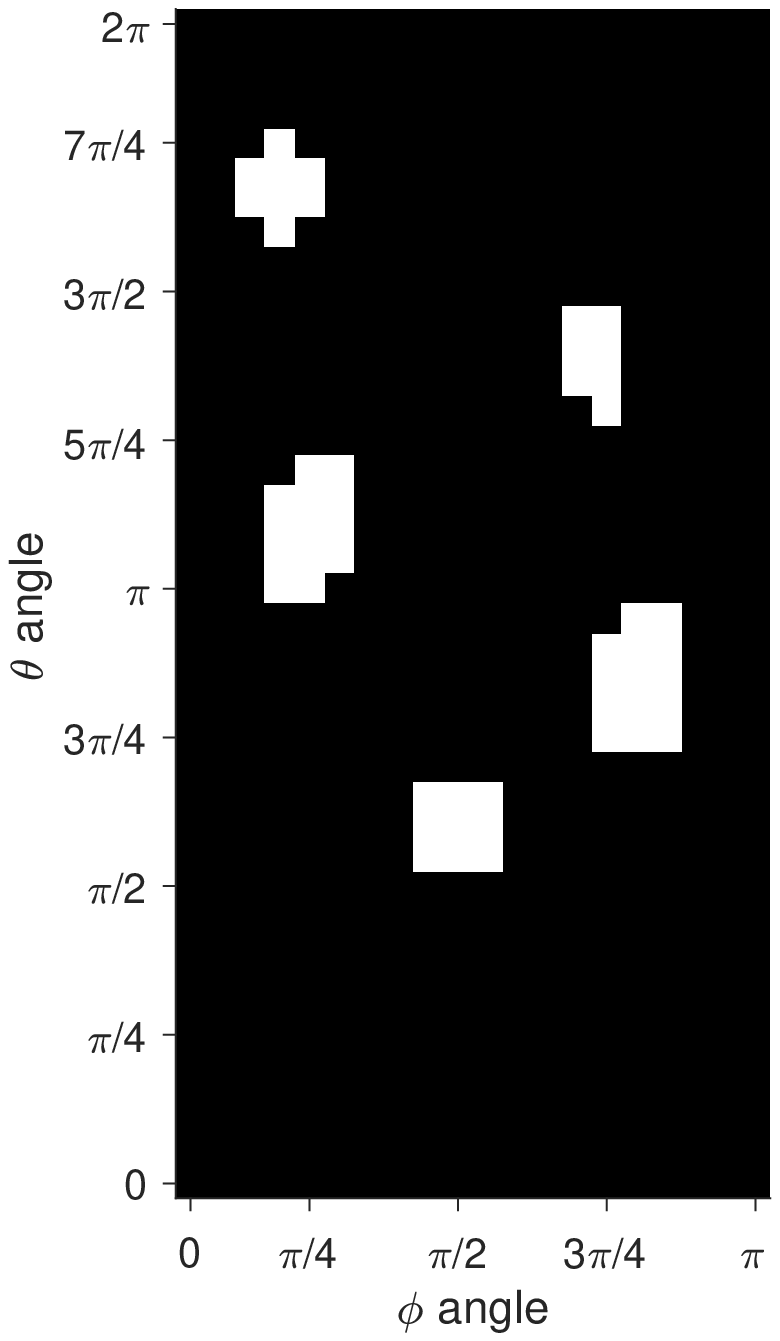}
                \caption{Masked Reconstruction}
        \end{subfigure}
\caption{ \label{fig:ongrid1_model} The model of $k=5$ sources used for the numerical simulation is on the left. The plotted values are the amplitudes for the given incident angle.  The center image is the regularized least squares solution.  On the right is a mask threshold of the values in the least squares solution that are larger that 0.0297.  This yields 5 regions which were then used for structured IHT.}
\end{figure}

We performed two recovery attempts on this model, first with IHT with $k=5$, then with structured IHT on the 5 depicted sets with $k_j=1$ on each set.  In this setup, the coherence of the matrix $A$ was 0.9990, the maximum coherence restricted to a single $S_j$ was 0.9471, and the maximum restricted coherence between any $S_j$ and $S_{j'}$ was 0.2428.  These values are all well outside theoretical bounds, but do still indicate a benefit to using structured IHT. The reconstructions for the two methods are shown in Fig.~\ref{fig:ongrid1_rec}.  The reconstruction for IHT is on the left, which failed to accurately recover the 5 sources.  In contrast, the structured IHT algorithm accurately reconstructed the source information.  We see here that the preprocessing step to generate the sets for structured IHT was powerful.  It is interesting that IHT failed for this fairly simple experiment -- one explanation is that IHT struggles to recover the three weaker (in amplitude) sources that are overshadowed by the two strongest sources (which are incorrectly recovered as multiple sources in close proximity).  Structured IHT avoids this pitfall as it is known that there is only one source in each location.  In fact, this experiment was rerun with all sources having equal amplitude.  This time, both IHT and structured IHT converged.
\begin{figure}[h!]
\centering
\begin{subfigure}[b]{0.35\textwidth}
                \centering
                \includegraphics[width=\textwidth,trim={4.5cm 0 5cm 0},clip]{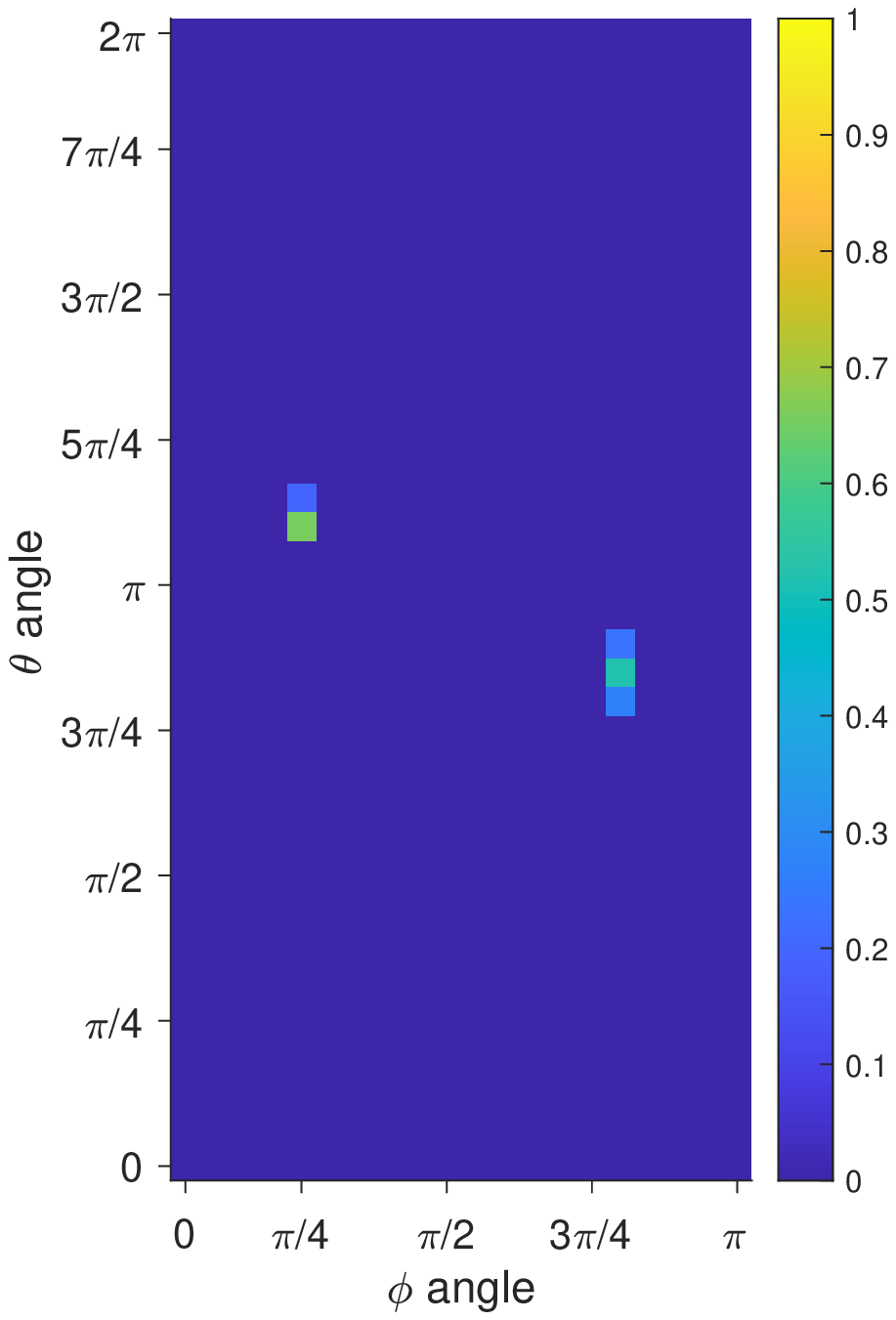}
                \caption{IHT}
        \end{subfigure}
\begin{subfigure}[b]{0.35\textwidth}
                \centering
                \includegraphics[width=\textwidth,trim={4.5cm 0 5cm 0},clip]{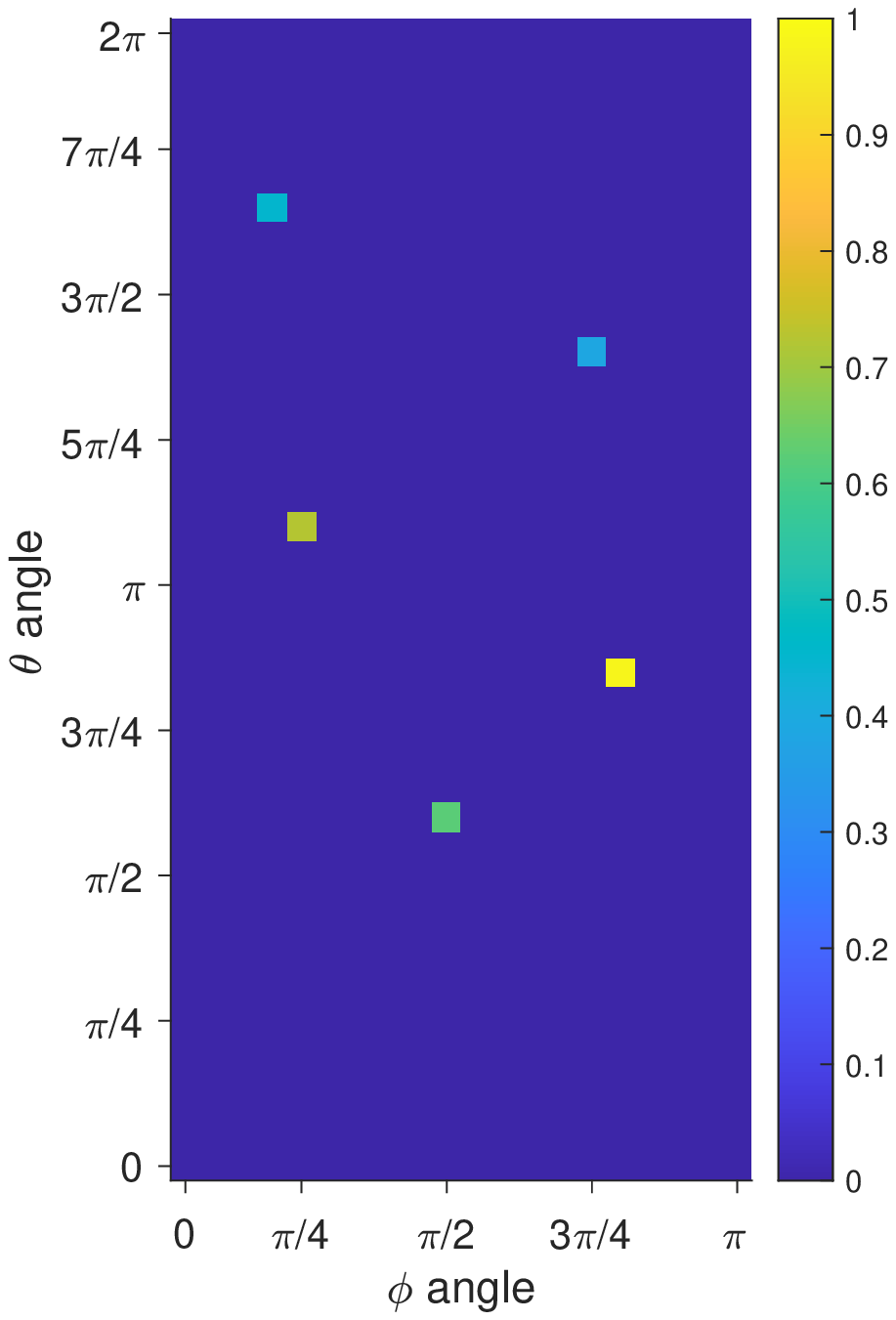}
                \caption{Structured IHT}
        \end{subfigure}
\caption{ \label{fig:ongrid1_rec} The reconstructions for IHT (left) and structured IHT (right) for the model and structured sparsity shown in Fig.~\ref{fig:ongrid1_model}.  The plotted values are the amplitudes for the given candidate angle.  The IHT algorithm failed to converge, whereas the structured IHT algorithm converges.}
\end{figure}

We look at one remaining example to show how preprocessing can allow us to determine effective sets on which to run structured IHT.  Consider the following model of 8 sources depicted on the left in Fig.~\ref{fig:ongrid2_model}.  Note that in this model, all sources were given constant amplitudes of 1.  In this model, some of the sources are close enough together that they cannot be visually separated in the least squares reconstruction.  However, a masking threshold can still be used to help generate the structured sets.  The mask of all values above a threshold of 0.0691 (chosen in the same manner as in the previous experiment) are shown in the right image.  This time, however, we keep the information of the values above this threshold.

If the sparsity level $k=8$ is known, we still need to figure out how to assign the sparsity structure to the 5 regions.  If we assume that our source amplitudes do not vary too much (but not necessarily that they are all equal), it is practical to do this.  Listing these regions from top to bottom, the total sum of the values is 2.72, 1.05, 0.89, 0.92, 1.69.  This correctly leads us to assign $k_1=3$, $k_2=k_3=k_4=1$, and $k_5=2$.  This setup had a similar coherence profile as in the previous example.  With this additional information, structured IHT was able to accurately reconstruct the model.  IHT was again unable to do so.

\begin{figure}[h!]
\centering
\begin{subfigure}[b]{0.35\textwidth}
                \centering
                \includegraphics[width=\textwidth,trim={4.5cm 0 5cm 0},clip]{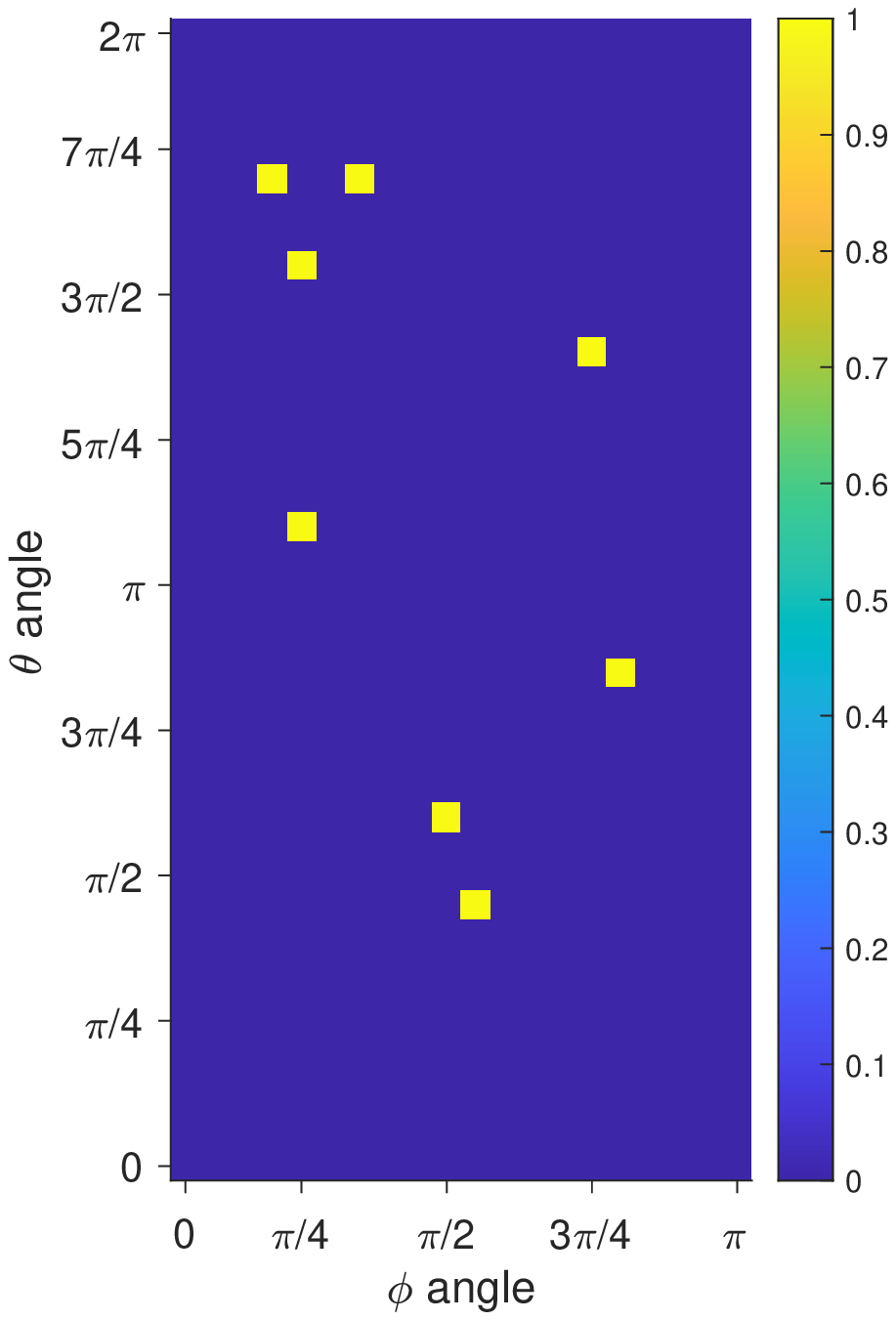}
                \caption{Model 2}
        \end{subfigure}
\begin{subfigure}[b]{0.35\textwidth}
                \centering
                \includegraphics[width=\textwidth,trim={4.5cm 0 5cm 0},clip]{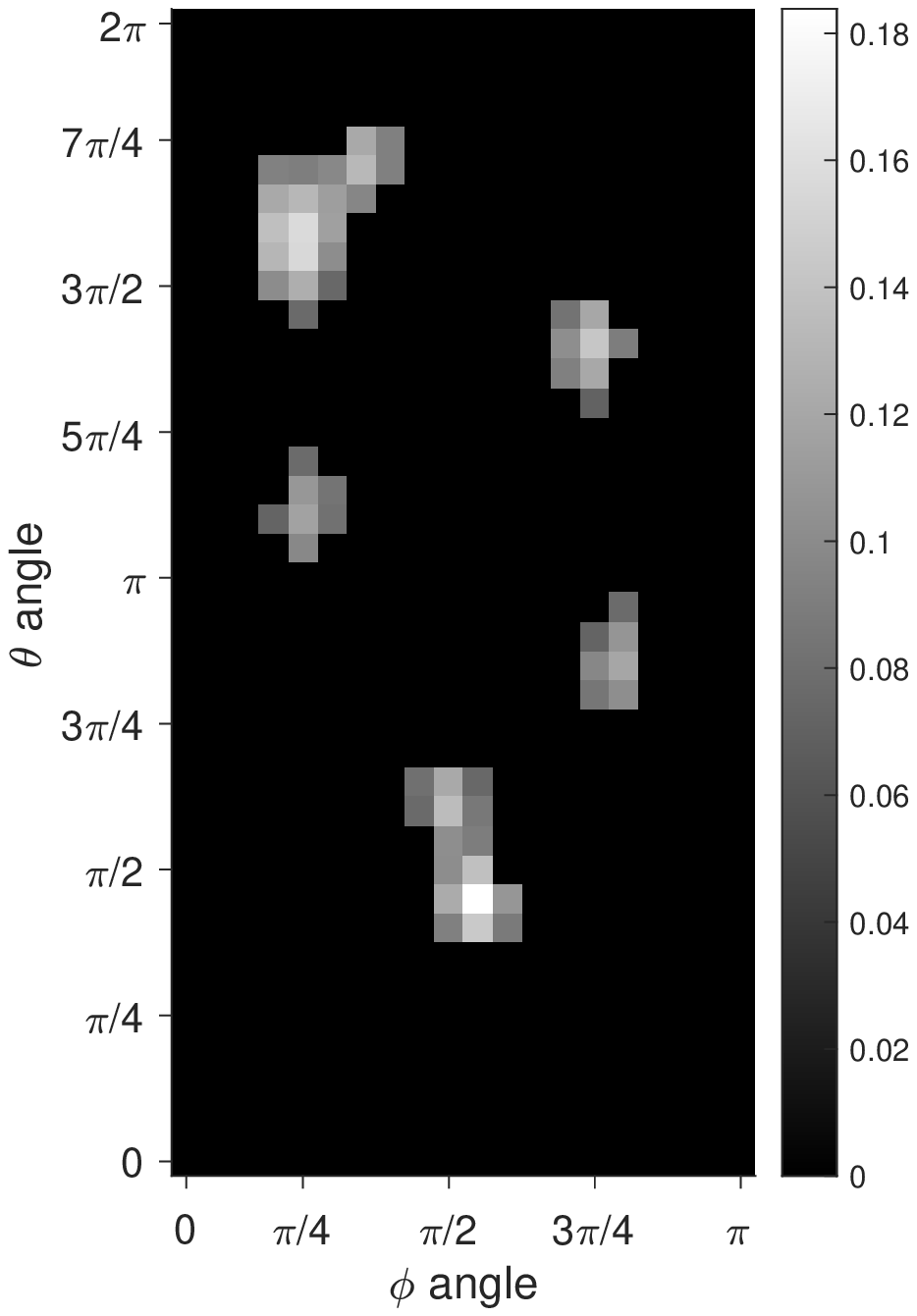}
                \caption{Masked Reconstruction}
        \end{subfigure}
\caption{ \label{fig:ongrid2_model} The model of $k=8$ sources used for the second numerical simulation is on the left. The plotted values are the amplitudes for the given candidate angle and are all fixed at 1 for this model.  The image on the right is the thresholded image of the least squares reconstruction where all values less than 0.0691 were set to 0.  This image was used to generate the 5 regions and their sparsity levels for for structured IHT. }
\end{figure}

Lastly, we remark that the theory of structured IHT (as well as much of sparse recovery theory in general) relies on the exact sparsity value being known ahead of time.  While this is infrequently known in practice, it is not difficult to adjust algorithms to still perform well.  For instance, even if the true sparsity level is unknown, it is often reasonable to obtain an upper bound on the sparsity level \cite{Lopes13,7506083} (or perhaps even by running the algorithm on varying levels of $k$).  Running IHT with thresholding on that upper bound usually yields good results.  For the structured IHT algorithm, one might increase each $k_j$ to some upper bound.  For Model 1 and Model 2, structured IHT still converged when $k_j$ was set to be 2 larger than its true value for each index set.  We note that interestingly, the IHT algorithm did indeed converge for Model 1 if thresholding kept the 7, 8, or 9 largest elements, which is larger than the true sparsity level $k=5$.  In the next section, we will look at an explicit example where the true sparsity level $k$ is unknown.

\section{Off-Grid Applications}
\label{sec:offgrid}
\subsection{Algorithm for Off-Grid Recovery}
If a true source location $\Theta_j$ is not one of the candidate angles $\Phi_{j'}$, one can either accept some level of error in the recovery, or one can attempt to improve the accuracy by recovering the location off-grid.  One natural idea for off-grid recovery is to refine the grid, especially in regions which likely contain the support of the solution.  This must be balanced with not adding too many grid points which could cause numerical instability \cite{electronics8111209}.  

Coherence intuitively plays a role here and also demonstrates how the structured IHT theory can be instructive.
Separate from computational requirements and issues of numerical stability, using a fine grid everywhere in order to minimize the off-grid error significantly increases the coherence.  As seen in \eqref{eq:coh_A}, and common in many other applications, the coherence increases as the grid spacing decreases.  Thus `global' coherence measurements will not yield many theoretical guarantees, which allows for the possibility that the algorithm will not converge.  However, if the true values of the incident angles were well separated, the coherence of the matrix from \eqref{eq:isp_2} (which only includes the correct incident angles and no other test angles) would be significantly smaller.  The idea for the proposed method is thus that one can start with a coarse grid with reasonable coherence.  As the grid is refined, it is ideally refined only around the true source locations.  In this sense, even though $\mu$ and $\mu_{S}$ will increase, the coherence values of the form $\mu_{S,S'}$ should decrease.  If we can construct these sets such that for each set the corresponding sparsity level is 1, according to the theory, it is more likely that the algorithm will converge.  Of course, if the true incident angles are close together, there is less that can be done in terms of coherence.  

We now apply the framework of structured IHT to refine the grid appropriately.  This will be described in a general setting, which will later be applied to the ISP.   Suppose that there are $k$ locations one is attempting to recover in a bounded region $\Omega$ in $\mathbb{R}^d$. Moreover, assume that each of the true locations $\mathbf{x}_j$ are known to live in a subset $S_j\subset \Omega$ such that $S_i\cap S_j=\emptyset$ for all $i$ and $j$.  A coarse grid can then be applied to each of these subsets, in order to run structured IHT.  As it is known that there is exactly one location in each subset, the sparsity level for each subset is  $k_j=1$.  Note that since it is unlikely that the true location is one of the grid points, we do not expect the solution vector to be exactly $k$-sparse, even though it is known that there are $k$ nonzero values.  The hope is that the structured IHT algorithm finds the grid point that is closest to the true location in each subset.  This can then be used as a focal point for refining the grid. 

For instance, let $\mathbf{x}_1,\dots,\mathbf{x}_k\in\mathbb{R}^d$ be the true locations of some unknown quantities such that $\mathbf{x}_j\in S_j.$ Suppose for each $1\le j\le k$, we have a grid of $N_j$ candidate locations $\{\mathbf{y}^{(j)}_i : 1\le i \le N_j\}$, where for all $i$, $\mathbf{y}^{(j)}_i\in S_j$.  We will assume that for all $j$, these grid points have a uniform spacing of $h_1,\dots,h_d$ in the respective coordinate direction.  Suppose after running structured IHT on each $S_j$ with $k_j=1$, we are left with the $k$ grid points $\{\mathbf{y}_{i_1}^{(1)},\dots,\mathbf{y}_{i_k}^{(k)} \}$, where each $i_j$ is some number between 1 and $N_j$.  We now propose running structured IHT again, but on a finer grid (by a factor of $\alpha$, where $1<\alpha\le2$) about these nonzero location.  On the next run of structured IHT, each set $S_j$ uses the grid points
\begin{equation}
\label{eq:refine}
   S_j= \left\{ \mathbf{y}^{(j)}_{i_j} \pm (a_1,\dots,a_d) : a_i \in\{0,\pm h_i/\alpha\}   \right\} \ .
\end{equation}
This process can be repeated until convergence or some stopping criteria.  This algorithm is described below as Algorithm \ref{algo:offgrid1}.  This description can easily be modified to allow for overlapping grids or keeping more than just one grid point with $k_{j}>1$.  For simplicity, we will stick with the described method. \\

\begin{algorithm}[H]
\label{algo:offgrid1}
\SetAlgoLined
 {\bf Input}: Data $b$, sparsity level $k$ \\
  {\bf Input}: Disjoint sets $S_1,\dots,S_k\subset\mathbb{R}^d$ \\ 
  {\bf Input}: Initial grid $\mathbf{Y}^{(1)}=\{\mathbf{y}^{(j)}_i : 1\le i \le N_j, 1\le j\le k\} $ \\
    {\bf Input}: Initial grid spacings $h_1^{(1)},\dots,h_d^{(1)}$ and refinement parameter $\alpha$  \\
 {\bf Output}: $\mathbf{Y}$ containing $k$ locations in $\mathbb{R}^d$  \\
  {\bf Output}: $x$ containing $k$ associated amplitudes  \\
 \vspace{3mm}
Set $t=1$\\
 \While{stopping criteria not met}{
  Generate $A(\mathbf{Y}^{(t)})$ using grid $\mathbf{Y}^{(t)}$\\
  \vspace{0.5pc}
  Run Structured IHT for $A(\mathbf{Y}^{(t)})x^{(t)}=b$ on $S_j=\{\mathbf{y}_i^{(j)} \}$ with $k_j=1$ for all $j$\\
 \hspace{2pc} This outputs $k$ locations $\{\mathbf{y}_{i_1}^{(1)},\dots,\mathbf{y}_{i_k}^{(k)} \}$\\
  \vspace{0.5pc}
 Create refined grid: $\mathbf{Y}^{(t+1)}=\left\{ \mathbf{y}^{(j)}_{i_j} \pm (a_1,\dots,a_d) : a_i \in\{0,\pm h^{(t)}_i/\alpha,1\le j\le k\}\right\} $\\
	\vspace{0.5pc}
	$h_i^{(t)}\gets h_i^{(t)}/\alpha$ for all $i$\\
	\vspace{0.5pc}
	$t \gets t+1$
 }
 \caption{Structured IHT with Grid Refinement for Off-Grid Recovery}
\end{algorithm} 

We now provide some theoretical justification for this idea.  For this refinement of the grid to improve the accuracy of recovery, it needs to be shown that in each iteration of Algorithm \ref{algo:offgrid1}, the structured IHT algorithm will converge to the available grid location that is closest (in physical distance) to each, potentially off-grid, true source location.  We emphasize that in this grid refinement process, it is more important to find the correct support as opposed to the correct nonzero values.  

Let $\mathbf{X}=\{\mathbf{x}_1,\dots\mathbf{x}_k\}$ contain the support locations in $\mathbf{R}^d$ of $x^*$.  That is, $x^*$ is a vector of length $k$ with all nonzero entries (which we will call the amplitudes), but each entry of $x^*$ has a real-space associated location given by $\mathbf{X}$.  Consider the linear system of the form $A(\mathbf{X})x^* =b$, where $A(\mathbf{X})$ is a matrix whose columns depend on the locations $\mathbf{X}$ (as in \eqref{eq:isp_3} for example).  The vector $b$ is the available data.  We assume we already have non-overlapping sets $S_j$ such that $\mathbf{x}_j\in S_j$.  Suppose we have a grid of candidate locations $\mathbf{Y}=\{\mathbf{y}^{(j)}_i : 1\le i \le n_j, 1\le j \le k\}$, where for all $i$, $\mathbf{y}^{(j)}\in S_j$.   Define the set $\mathbf{\tilde{X}}$ to contain the grid angles closest (in physical distance) to the true, potentially off-grid, source locations.  This set is defined as
\begin{equation}
\mathbf{\tilde{X}} = \left\{ y^{(1)}_{m_1}, \dots, y^{(k)}_{m_k}  \right\} \text{  where  } m_j=\argmin_{j'} \|\mathbf{y}^{(j)}_{j'}- \mathbf{x}_j\|_2  \text{ for all } j\ .
\end{equation}
Our goal is to ensure that structured IHT will  converge to a vector that has support equal to $\mathbf{\tilde{X}}$.  Let $\tilde{x}^*$ be the minimum $\ell_0$ solution to the system  
\begin{equation}
\label{eq:min1}
   \tilde{x}^* = \min_{x} \|b - A(\mathbf{\tilde{X}})x\|_0 \ .
\end{equation}
The relevant sparse linear system to solve is now 
\begin{equation}
    b = A(\mathbf{Y})\tilde{x}^*+\epsilon \ ,
\end{equation}
where the error term is given simply by $\epsilon = b-A(\mathbf{Y})\tilde{x}^*$.  By \eqref{eq:min1}, this error has been chosen to be as small as possible in terms of the $\ell_0$ norm (for the fixed support).   We seek to ensure that after running the structured IHT algorithm and obtaining an $x^{(t)}$, we have $\supp(x^{(t)})=\supp(\tilde{x}^*)$.  Applying Corollary \ref{cor1} to this setup, we have
\begin{align}
\|&x^{(t)}-\tilde{x}^*\|_1 \le \left(\rho+(k-1)\tilde{\rho}\right)^t\|x^{(0)}-\tilde{x}^*\|_1  + \frac{3k\|A^*(\mathbf{Y})\epsilon\|_\infty}{1-\rho-(k-1)\tilde{\rho}} \ ,
\end{align}
where we have set $L=k$.  Note that $\rho$ and $\tilde{\rho}$ depend on the matrix $A(\mathbf{Y})$, hence, they also depend on the choice of grid points $\mathbf{Y}$. Assuming that $\rho+(k-1)\tilde{\rho}<1$ (which is the necessary condition for theoretical convergence), the only term that cannot be made arbitrarily small by taking $t$ large enough is the error term.  This fixed term must be small enough to guarantee that $\supp(x^{(t)})=\supp(\tilde{x}^*)$ as $t\to\infty$.  A sufficient, but not necessary, way to guarantee that the supports are equal is if the error $\|x^{(t)}-\tilde{x}^*\|_1$ is smaller than the minimum (in absolute value) entry of $\tilde{x}^*$.  We define this value by
\begin{equation}
    \tilde{c} = \min_j\left\{|(\tilde{x}_*)_j| : |(\tilde{x}_*)_j|> 0 \right\} \ .
\end{equation}
Thus, if $3k\|A^*(\mathbf{Y})\epsilon\|_\infty/(1-\rho-(k-1)\tilde{\rho})<\tilde{c}$, we can run the structured IHT algorithm for sufficiently many iterations to guarantee that the support of the result matches the $k$ candidate locations closest to the true locations.  This is summarized in the following proposition.
\begin{proposition}
\label{prop:off}
Let $\{\mathbf{Y}^{(t)}\}$ be the sequence generated from the structured IHT algorithm with grid refinement (Algorithm \ref{algo:offgrid1}) for the equation $A(\mathbf{X})x^*=b$, where $\mathbf{X}=\{\mathbf{x}_1,\dots,\mathbf{x}_k\}$, and each $\mathbf{x}_j\in S_j$.
For each $t$, let $S^{(t)}_j$ be the locations in $\mathbf{Y}^{(t)}$ that are contained in $S_j$.  
For each $\mathbf{Y}^{(t)}$, define 
\begin{equation}
\rho^{(t)} = \max_{1\le n\le k} \mu(A(\mathbf{Y}^{(t)}))_{S^{(t)}_n} \quad,\quad \tilde{\rho}^{(t)}=\max_{\substack{1\le m,n\le k\\m\neq n}} \mu(A(\mathbf{Y}^{(t)}))_{S^{(t)}_n,S^{(t)}_m} \ . 
\end{equation}
Suppose there exists a $T>0$ such that for all $t<T$, $\rho^{(t)}+(k-1)\tilde{\rho}^{(t)}<1$, and 
\begin{equation}
\label{eq:grid23}
3k\|A^*(\mathbf{Y}^{(t)})\epsilon^{(t)}\|_\infty/(1-\rho^{(t)}-(k-1)\tilde{\rho}^{(t)})<\tilde{c}^{(t)} \ .\end{equation}
Then for all $t<T$
\begin{equation}
\label{eq:grid12}
    \|\mathbf{Y}^{(t)}-\mathbf{X} \|_2 \le \|\mathbf{Y}^{(t-1)}-\mathbf{X}\| \ .
\end{equation}
\end{proposition}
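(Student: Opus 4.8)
The plan is to split the argument into two parts: (i) showing that, at each refinement stage $t<T$, the structured IHT inner solve on the grid $\mathbf{Y}^{(t)}$ converges to a vector whose support is exactly the set of $k$ grid points closest to the true locations $\mathbf{X}$; and (ii) showing that the grid-refinement rule \eqref{eq:refine} always retains those recovered points, so the nearest-grid-point distance to each $\mathbf{x}_j$ cannot increase from one stage to the next. The inequality \eqref{eq:grid12} then follows by combining the $k$ componentwise distance bounds in the norm of interest.

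For part (i) I would fix $t<T$ and argue as in the discussion preceding the proposition. Writing $b = A(\mathbf{Y}^{(t)})\tilde{x}^* + \epsilon^{(t)}$ with $\tilde{x}^*$ the minimum-$\ell_0$ solution \eqref{eq:min1} supported on the $k$ points of $\mathbf{Y}^{(t)}$ nearest to $\mathbf{X}$ (one per $S_j$, since $k_j=1$) and $\epsilon^{(t)} = b - A(\mathbf{Y}^{(t)})\tilde{x}^*$, apply Corollary \ref{cor1} with $L=k$. The hypothesis $\rho^{(t)}+(k-1)\tilde{\rho}^{(t)}<1$ drives the geometric factor in \eqref{eq:cor} to $0$, so after enough inner iterations $\|x^{(t)}-\tilde{x}^*\|_1$ falls below $3k\|A^*(\mathbf{Y}^{(t)})\epsilon^{(t)}\|_\infty/(1-\rho^{(t)}-(k-1)\tilde{\rho}^{(t)})$, which by \eqref{eq:grid23} is strictly less than $\tilde{c}^{(t)}$. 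Writing $m_j=\argmin_{j'}\|\mathbf{y}^{(j)}_{j'}-\mathbf{x}_j\|_2$ for the index on which $\tilde{x}^*$ is supported within $S_j$, one then gets $|x^{(t)}_{m_j}|\ge\tilde{c}^{(t)}-\|x^{(t)}-\tilde{x}^*\|_1>0$; since the local threshold keeps only $k_j=1$ entry on $S_j$, the retained entry must be the one at index $m_j$, so $\supp(x^{(t)})=\supp(\tilde{x}^*)$ and the point recovered in $S_j$ is exactly $\mathbf{y}^{(j)}_{m_j}$, the grid point of $\mathbf{Y}^{(t)}$ closest to $\mathbf{x}_j$.

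For part (ii), the refinement \eqref{eq:refine} producing $\mathbf{Y}^{(t+1)}$ from the recovered points includes each $\mathbf{y}^{(j)}_{m_j}$ itself (take all $a_i=0$), so the nearest point of $\mathbf{Y}^{(t+1)}$ in $S_j$ to $\mathbf{x}_j$ is at distance at most $\|\mathbf{y}^{(j)}_{m_j}-\mathbf{x}_j\|_2$, which by part (i) is exactly the distance from $\mathbf{x}_j$ to the nearest point of $\mathbf{Y}^{(t)}$ in $S_j$. Aggregating these $k$ componentwise inequalities yields \eqref{eq:grid12}. Because the residual $\epsilon^{(t)}$, the restricted coherences $\rho^{(t)},\tilde{\rho}^{(t)}$, and $\tilde{c}^{(t)}$ all depend on the grid $\mathbf{Y}^{(t)}$ built at the previous stage, the argument is applied stage by stage for $t<T$ rather than through a single induction.

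The hard part will be part (i): one must check that the $\ell_1$-closeness supplied by Corollary \ref{cor1} genuinely pins down the support and not merely the largest entries. This is precisely where the choice $k_j=1$ is essential --- the bound $\|x^{(t)}-\tilde{x}^*\|_1<\tilde{c}^{(t)}$ by itself does not force small entries of $x^{(t)}$ outside $\supp(\tilde{x}^*)$ to vanish, but together with ``at most one nonzero per $S_j$'' and ``the correct coordinate is nonzero'' it upgrades to an exact support match. A secondary bookkeeping issue is to make precise what $\|\mathbf{Y}^{(t)}-\mathbf{X}\|_2$ denotes (the collection of componentwise nearest-grid-point distances) and to record that structured IHT must be run for enough inner iterations at each stage to get under the $\tilde{c}^{(t)}$ threshold.
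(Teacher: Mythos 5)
Your proposal is correct and follows essentially the same route as the paper, which proves this proposition informally in the discussion immediately preceding its statement: write $b=A(\mathbf{Y}^{(t)})\tilde{x}^*+\epsilon^{(t)}$ with $\tilde{x}^*$ the minimum-$\ell_0$ solution supported on the nearest grid points, apply Corollary \ref{cor1} with $L=k$, and use \eqref{eq:grid23} to force $\supp(x^{(t)})=\supp(\tilde{x}^*)$ so that refinement about the recovered points (which retains each point via $a_i=0$) cannot increase the distance to $\mathbf{X}$. Your part (i) is in fact slightly more careful than the paper's, which asserts the sufficiency of $\|x^{(t)}-\tilde{x}^*\|_1<\tilde{c}$ for support equality without spelling out that the ``at most one nonzero per $S_j$'' structure of the thresholded iterate is what upgrades ``every true coordinate is nonzero'' to an exact support match.
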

Note that we can have equality in \eqref{eq:grid12} in cases when $\mathbf{Y}^{(t)}$ is already closer to $\mathbf{X}$ than any of the refined grid points.  In general, we expect $\tilde{\rho}^{(t)}$ to be non-increasing as $t$ increases, whereas $\rho^{(t)}$ is most likely going to increase above 1 for large enough values of $t$.  Additionally, one must start with an accurate enough grid such that the true solution is well represented in order for \eqref{eq:grid23} to be satisfied.

We now apply this algorithm to our ISP for off-grid recovery.  We will also tackle some of the challenging questions in terms of running a sparse recovery algorithm in practice, such as how to perform recovery when the true sparsity level $k$ is unknown.
We consider the same ISP as in the previous section of trying to recover $k$ plane wave sources from measurements of the total field.  In the case when $k$ is known, to use the grid refinement outlined in Algorithm \ref{algo:offgrid1}, we must input $k$ disjoint sets, each of which contains one source location.  However, this is not a simple task, and preprocessing methods such as those described in Section \ref{sec:ongrid} can fail due to the amount of noise introduced from using a coarse grid.  It is important not to be too restrictive with these initial sets, as they might not include the true locations.  To this end, we describe a more involved first step to find the $k$ sets.   

Suppose for now that the true sparsity level $k$ is known, but a partition into $k$ sets which each contain a true location is not known.  Using an initial coarse grid of $N_0$ points, we propose creating $N_0$ initial sets, where each set $S_j$ is centered on one of the grid points $\mathbf{y}_j^{(0)}$.  For each $j$, we create a refined grid via \eqref{eq:refine} and include these $3^d$ points in $S_j$.  Then Algorithm 2 can be run.  The only concern is that this initial grid has to be finer than the minimum spacing between any two sources. When $N_0\gg k$, many of these sets can be in close proximity to one another, allowing for the possibility of the same angle appearing in more than one set $S_j$.  In this scenario one can arbitrarily remove the repeated angle from one of the sets so that all sets are mutually disjoint.  

However, if Algorithm \ref{algo:offgrid1} were continuously run on this setup, this would result in finding $N_0$ locations, for which we know only $k$ should exist.  Thus, between each run through of this process, we interject one additional threshold step to reduce the total number of grid points we are keeping from the initial number of sets $N_0$.  In between each run of structured IHT and refining the grid, we threshold  the resulting vector $x^{(t)}$ from structured IHT   by $H_{K_t}(x)$ for some $K^{(t)}\ge k$.  These locations that are thresholded are permanently removed from the grid, along with their containing set $S_j$.  The remaining question is how to choose $K^{(t)}$.   Similar to our masking procedure in the previous section, we propose a threshold that is based on the mean of remaining values and threshold those that are insignificant.  In this scenario, for some positive constant $c^{(t)}<1$
\begin{equation}
\label{eq:Kt_def}
K^{(t)}=\max\left\{\left| \left\{x^{(t)}_\ell : x^{(t)}_\ell > c^{(t)} M_1^{(t)} \right\}\right| ,k\right\}\qquad,\qquad M_1^{(t)} = \frac{1}{|x^{(t)}|}\sum_{j=1}^{|x^{(t)}|} \left|x^{(t)}_j\right| \ .
\end{equation}
If $k$ is known, the definition of $K_t$ has a floor of $k$. Once $K_t=k$, the algorithm continues exactly as stated in Algorithm \ref{algo:offgrid1}.  The average value of the vector $x^{(t)}$, denoted by $M_1^{(t)}$, changes each iteration, and one has much freedom in choosing $c^{(t)}$.  In our implementation, for simplicity, we have left $c^{(t)}$ unchanged each iteration.  If $c^{(t)}$ is too large, the value of $K^{(t)}$ can be too small and lead to failed convergence in cases of  high variance between source.    However, these problems are more difficult in general as the weaker source can be easily lost in any noise that is present.

What we have proposed is a general framework for off-grid recovery.  It alternates between running structured IHT to refine the grid in an optimal direction, and a global threshold to narrow down the size of the support to the true sparsity level.  Note that there is substantial flexibility in the algorithm design regarding how to refine the grid, and how to determine the sparsity levels and grouping schemes for the subsequent structured IHT runs. We have described a sample implementation, which is summarized below as Algorithm \ref{algo:offgrid}.\\

\begin{algorithm}[H]
\label{algo:offgrid}
\SetAlgoLined
 {\bf Input}: Data $b$\\
 {\bf Input}: Overall sparsity level $k$ {\it(optional)}\\
 {\bf Input}: Disjoint sets $S_1,\dots,S_{N_0}\subset\mathbb{R}^d$\\
   {\bf Input}: Initial grid $\mathbf{Y}^{(1)}=\{\mathbf{y}^{(j)}_i : 1\le i \le N_j, 1\le j\le k\} $ \\
    {\bf Input}: Initial grid spacings $h_1^{(1)},\dots,h_d^{(1)}$  \\
    {\bf Input}: Parameter $0<c<1$ \\ 
 {\bf Output}: $x$\\
 \vspace{3mm}
Set $t=1, N=N_0$\\
 \While{stopping criteria not met}{
  $(\mathbf{Y}^{(t+1)},x^{(t)})$=Algorithm 2($b,S_j,\mathbf{Y}^{(t)},h_j^{(t)})$ run for one iteration \\
  \vspace{0.5pc}
  $M_1^{(t)}=\frac{1}{N}\sum_{j=1}^{N} \left|x^{(t)}_j\right|$ \\
  \vspace{0.5pc}
	$K^{(t)}=\max\left\{\left| \left\{x^{(t)}_\ell : x^{(t)}_\ell > c M_1^{(t)} \right\}\right|,k \text{  (if known)}\right\}$\\
	\vspace{0.5pc}
	$x^{(t)}=H_{K^{(t)}}(x^{(t)})$\\
	\vspace{0.5pc}
	\For{$j = 1$ to $N$}
	{\If{$S_j\cap$ real-space support of $x^{(t)}$=$\emptyset$ }{\vspace{0.3pc}
	Remove $S_j$ from list and associated grid points from $\mathbf{Y}^{(t+1)}$\\
	\vspace{0.5pc}
	$N\gets N-1$ 
	}}
	\vspace{0.5pc}
	$t \gets t+1$
 }
 \caption{Structured IHT for Off-Grid Recovery}
\end{algorithm} 
\vspace{1pc}

The convergence of Algorithm 3 relies on this process eventually converging to Algorithm 2, when one can then apply the results of Proposition \ref{prop:off}. To that end, one must ensure that this threshold never removes a correct set, and that it will eventually go down to $k$ sets.  One way to ensure that this occurs depends on the accuracy of $\tilde{x}^*$ from \eqref{eq:min1} in terms of approximating the true amplitudes.  For instance, let $z^*$ be the vector that has the correct amplitudes of $\tilde{x}^*$ in the matching entries of $\mathbf{\tilde{X}}$ (with zeros as appropriately needed).  Then we have
\begin{align}
    \|x^{(t)}-z^*\|_1 =  & \|x^{(t)}-\tilde{x}^*+\tilde{x}^*-z^*\|_1 \nonumber \\ 
    \le &  \|x^{(t)}-\tilde{x}^*\|_1 + \|\tilde{x}^*-z^*\|_1 = E  \ .
    \end{align}
The first of these error terms is controlled by the result of Corollary \ref{cor1}, while the second term depends on the solution to \eqref{eq:min1}.  As the farthest any entry of $x^{(t)}$ can be from $z_*$ is E, if $c^{(t)}M_1^{(t)}$ is less than the minimum value of $\tilde{x}^*$ minus $E$ , then we are guaranteed not to eliminate any entries that should stay. On the other hand, if $E<c^{(t)}M_1^{(t)}$, then any entries that should be 0 will be removed.  If both of these inequalities can be satisfied, then there exists a $c^{(t)}$ such that Algorithm 2 can be applied.

While the case when the true sparsity level $k$ is unknown can be more difficult, we propose using this described algorithm with only minor modifications.  The only instance when the sparsity level $k$ is required in Algorithm 3 is for setting a floor value for $K^{(t)}$ in \eqref{eq:Kt_def}.  When $k$ is unknown, one can either set the floor for $K^{(t)}$ to be 1, or to some known lower bound that is problem-specific.   Using Algorithm \ref{algo:offgrid} for off-grid recovery when the true sparsity level $k$ is unknown will be looked at in the following numerical simulations.

\subsection{Numerical Simulations}
We first test Algorithm 2 to demonstrate the use of the off-grid recovery algorithm.  We begin with a one-dimensional experiment in polar angle $\theta$, where the azimuthal angle is fixed at $\phi=\pi/2$.  Five values of $\theta$ were chosen, where each $\theta_j$ was chosen uniformly at random in the interval $[2(j-1)\pi/5,(2j-1)\pi/5]$.  Each source was given unit amplitude.  The dimensionless parameter was set to be $\omega R=40$ with $M=100$ detectors placed around a sphere.  No noise was added to the data (besides the noise from having an off-grid source).  We conducted three numerical experiments attempting to recover these sources.  

The first algorithm attempted was the standard IHT algorithm on $N$ test sources in the form of \eqref{eq:1d_angles}.  Note that with a probability of 1, none of these test source angles were equal to the true angles.  The second algorithm was the structured IHT algorithm with $N$ test angles, with the given structure $S_j = \{(j-1)N+1,\dots,jN\}$ for $j=1,\dots,5$.  This ensured that $k_j=1$ for each $S_j$.  The third algorithm tested was Algorithm 2 for off-grid recovery using these same sets $S_j$. For this off-grid recovery algorithm, we set $\alpha = 1.1$ in \eqref{eq:refine} and used an initial coarse grid of $N_0=15$ incident angles.  

The results of these experiments are shown below in Fig.~\ref{fig:3graphs}.  For IHT (A) and structured IHT (B), the experiment was run 100 times, for varying values of $N$ between 0 and 1000.  One can see that, in general, as $N$ increases, the coherence increases (depicted by the blue lines).  The red orange line plots the relative residual, given by 
\begin{equation}
    \label{eq:residual}
    \delta r = \frac{\|b-A(\mathbf{Y})x^{(t)}\|_2}{\|b\|_2} \ .
\end{equation}
For IHT, one can see that as the coherence increases beyond a certain point (roughly 0.5), the algorithm becomes unstable.  For some values of $N$, IHT finds a reasonable solution, but by slightly changing this value of $N$, the algorithm fails to converge correctly.  On the other hand, with the additional structure information, structured IHT  has a decreasing trend in the residual as $N$ increases, despite the fact that the coherence is converging to 1.

The last plot in Fig.~\ref{fig:3graphs} shows Algorithm 2 starting with $N_0=15$ grid points.  Plotted are the coherence values and relative residual for 100 iterations of Algorithm 2.  We note that the majority of these coherence values were too large to apply the theoretical results of Proposition \ref{prop:off}.  The shape of these curves resembles the results of the structured IHT algorithm.  However, there is a clear benefit to using Algorithm 2 for this simple example.  The convergence plot for Algorithm 2 is much smoother than the convergence of structured IHT. If one were to run the on-grid structured IHT algorithm, the result would highly depend on the choice of $N$.  For example, choosing $N=91$ results in $\delta r=0.0479$, while $N=96$ doubles the relative residual to 0.0900.  The point is, when running Algorithm 2, the user does not need to choose a fixed $N$, and can instead allow the algorithm to, in a sense, choose an appropriate value for $N$.  

\begin{figure}[h!]
\centering
\begin{subfigure}[b]{0.32\textwidth}
                \centering
                \includegraphics[width=\textwidth]{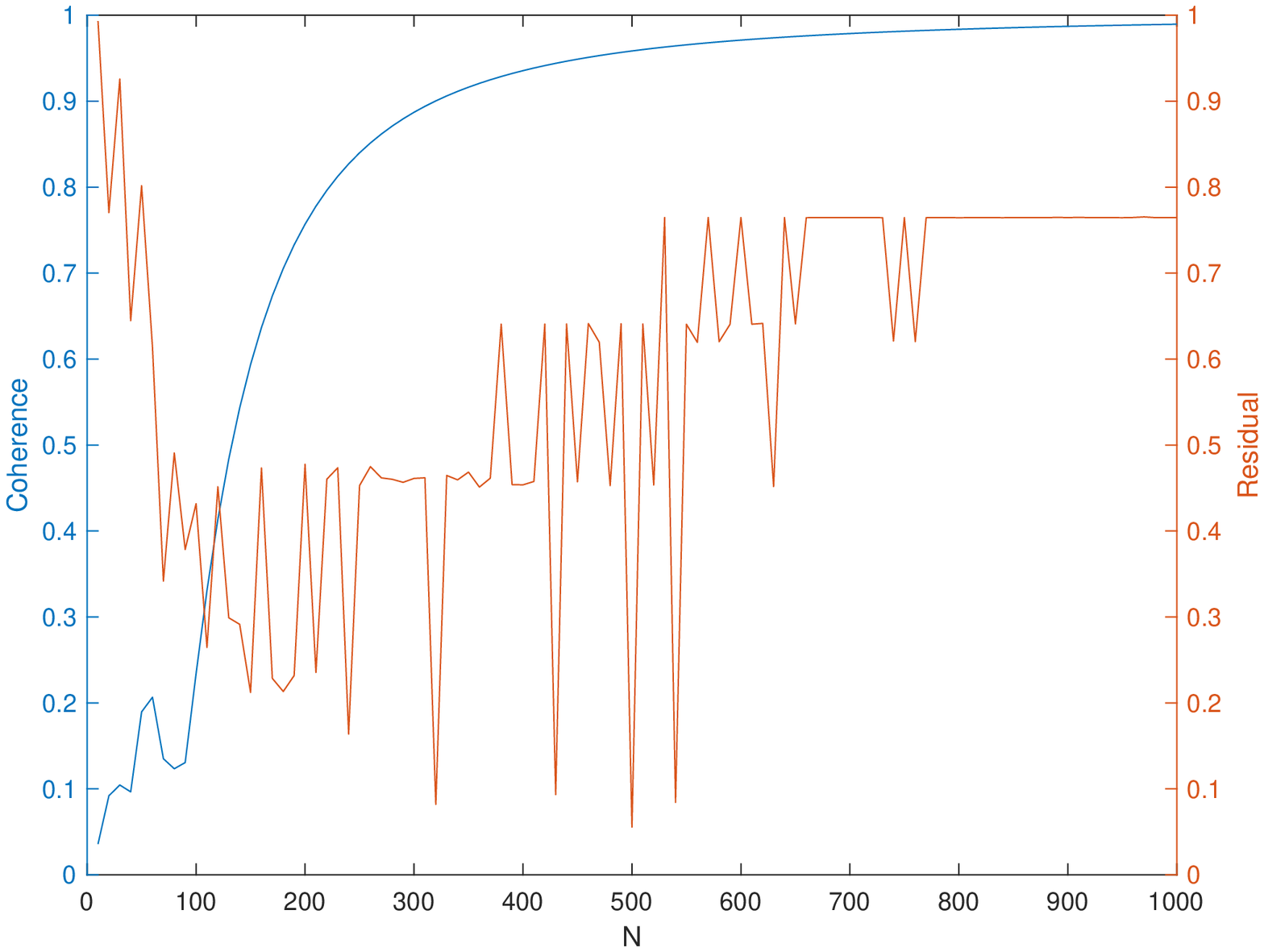}
                \caption{IHT}
        \end{subfigure}
\begin{subfigure}[b]{0.32\textwidth}
                \centering
                \includegraphics[width=\textwidth]{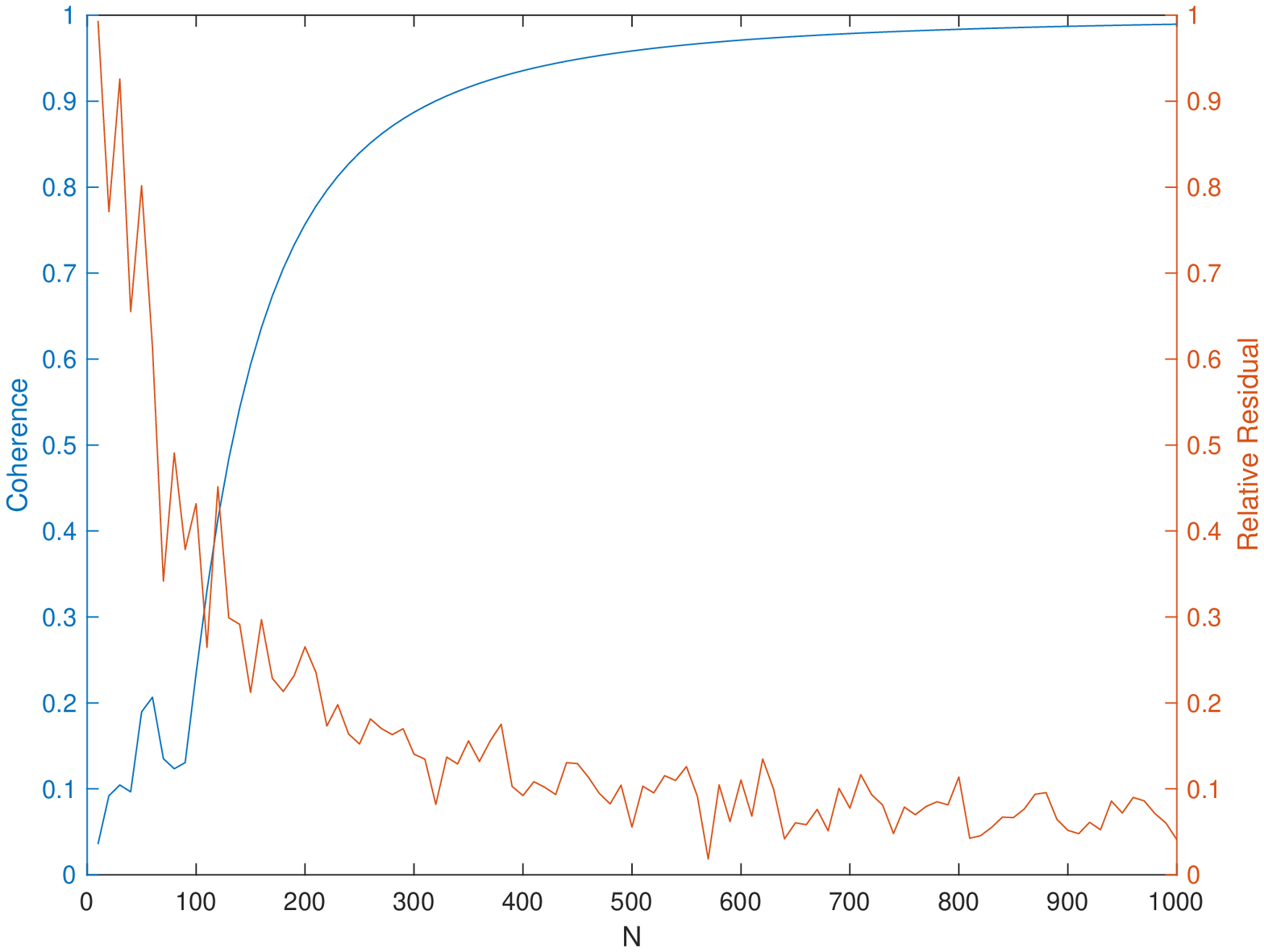}
                \caption{Structured IHT}
        \end{subfigure}
        \begin{subfigure}[b]{0.32\textwidth}
                \centering
                \includegraphics[width=\textwidth]{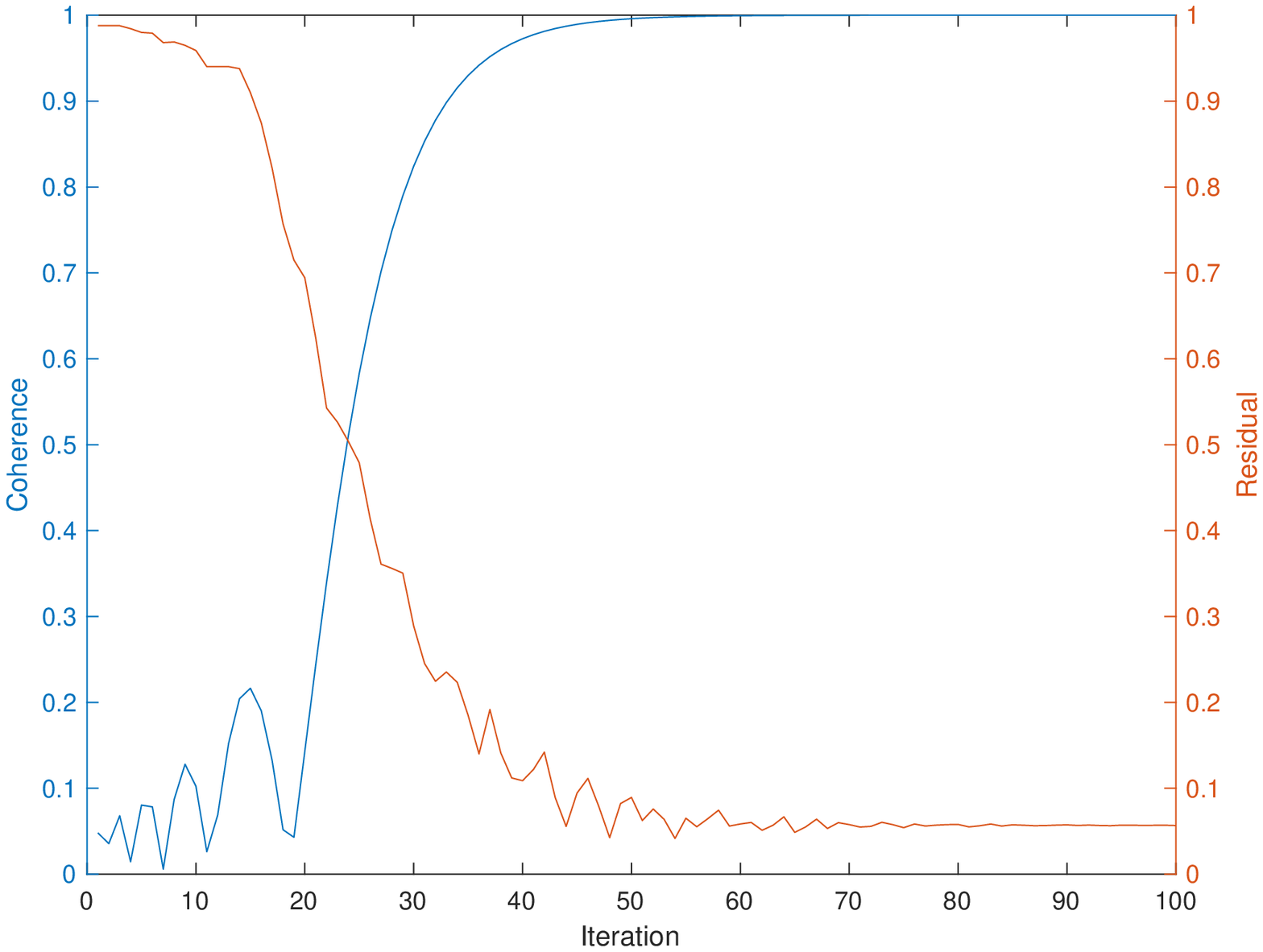}
                \caption{Off-grid recovery}
        \end{subfigure}
\caption{ \label{fig:3graphs} Results for IHT (left) and structured IHT (center) for varying values of $N$ test source angles.  The coherence and relative residual \eqref{eq:residual} are plotted in blue and red orange respectively. The same quantities for 100 iterations of Algorithm 2 are plotted on the right.}
\end{figure}

We then tested the performance of Algorithm 3 for off-grid recovery on a more complicated example.  In particular, we considered the case in which the true sparsity value was unknown.  We consider the full two-dimensional problem in $\theta$ and $\phi$, with $M=100$ detectors placed uniformly around a sphere.  The dimensionless parameter was decreased to $\omega R=4$.  
Six plane waves were placed with angles chosen uniformly randomly and amplitudes chosen randomly from a normal distribution with mean 1 and standard deviation 0.2.  An initial coarse uniform 20 $\times$ 10 grid in $(\theta,\phi)$ is used.  The true angle values and the least square solution interpolated over this grid is shown below in Fig.~\ref{fig:offgrid1_setup}.

\begin{figure}[h!]
\centering
\includegraphics[width=0.35\textwidth,trim={4.5cm 0 5cm 0},clip]{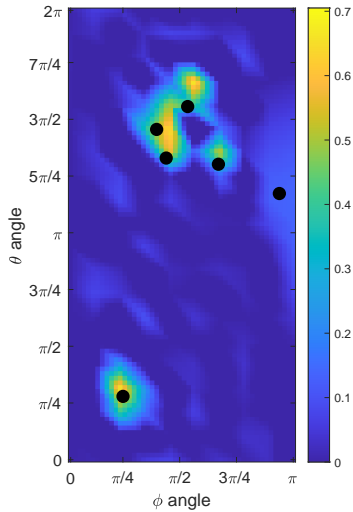}
\caption{ \label{fig:offgrid1_setup} The least squares reconstruction of the inverse source problem for $k=6$ sources.  The reconstruction was conducted over a $20\times 10$ grid in spherical parameters $(\theta,\phi)$.  The plotted result is an interpolation over this grid.  The true incident angles of the 6 sources are indicated by the black dots.  Note that the sources did not have identical amplitudes.  They were normally distributed with mean 1 and standard deviation 0.2.  Their true values in ascending order were 0.668, 0.861, 0.897, 0.944, 1.10, and 1.23. }
\end{figure}

We then performed the proposed off-grid recovery algorithm on this setup.  We emphasize that it was assumed that the true sparsity level of $k=6$ was unknown.  The thresholding was performed via \eqref{eq:Kt_def} with $c^{(t)}=1/4$ for all $t$.  A visualization of the algorithm's first iteration is shown below in Fig.~\ref{fig:offgrid1_iterations}.  We then ran the off-grid recovery algorithm on the identical setup, but first added noise to the data.  The noise added was Gaussian white noise at a level of 1\% with respect to the data.  

\begin{figure}[h!]
\centering
\begin{subfigure}[b]{0.35\textwidth}
                \centering
                \includegraphics[width=\textwidth,trim={4.5cm 0 5cm 0},clip]{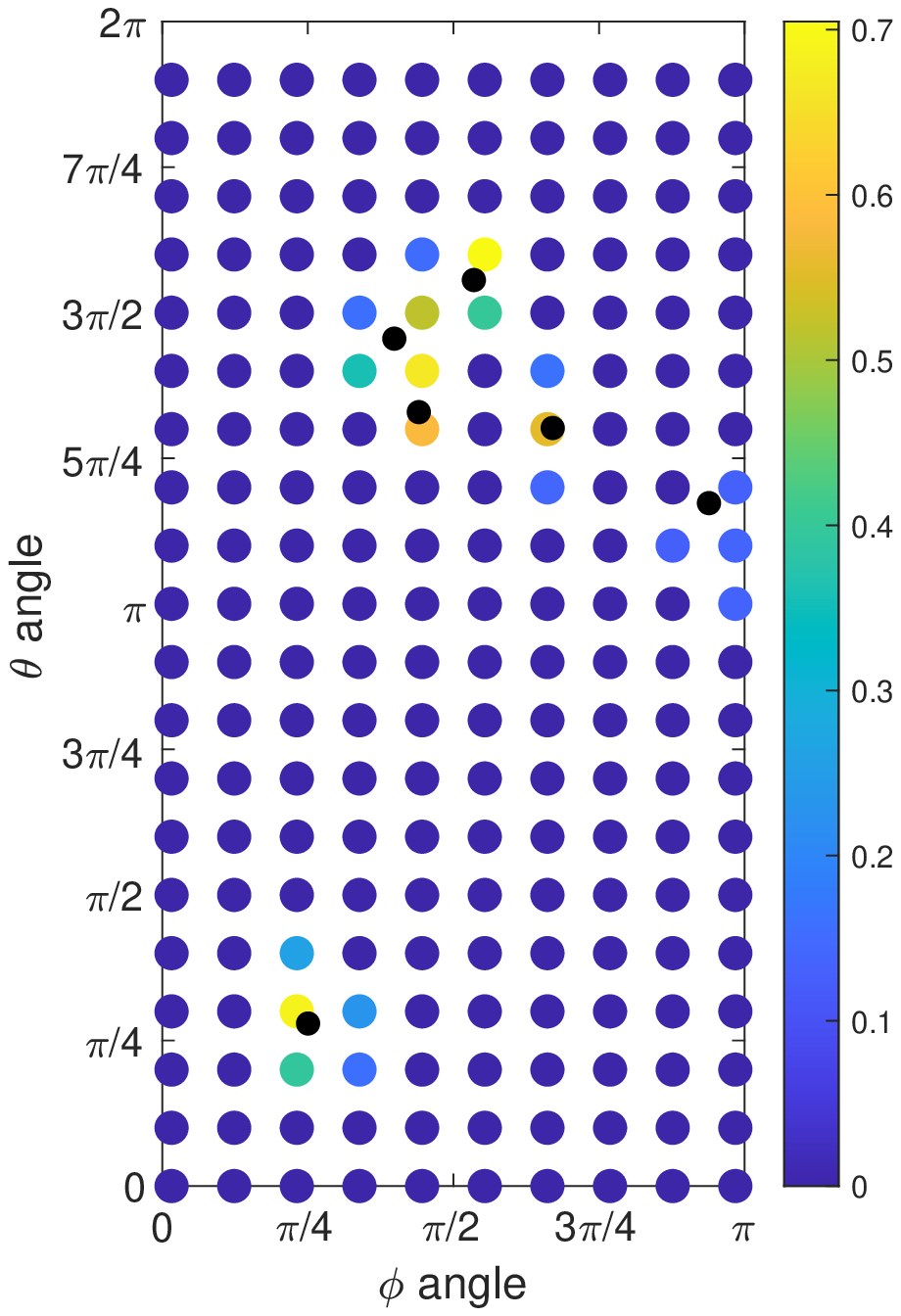}
                \caption{IHT on Initial Grid}
        \end{subfigure}
\begin{subfigure}[b]{0.35\textwidth}
                \centering
                \includegraphics[width=\textwidth,trim={4.5cm 0 5cm 0},clip]{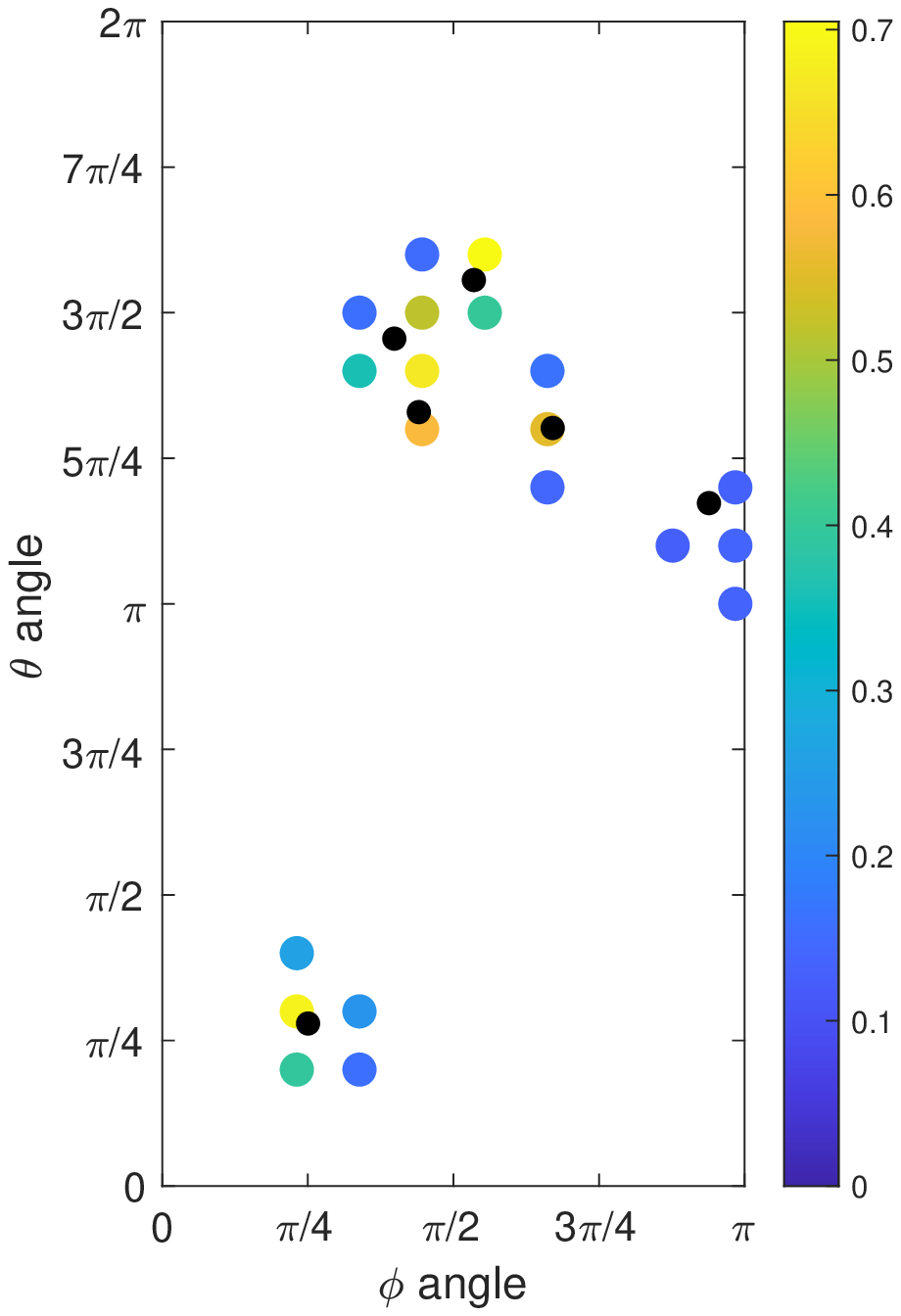}
                \caption{Keep Non-thresholded Points}
        \end{subfigure}
        \begin{subfigure}[b]{0.35\textwidth}
                \centering
                \includegraphics[width=\textwidth,trim={4.5cm 0 5cm 0},clip]{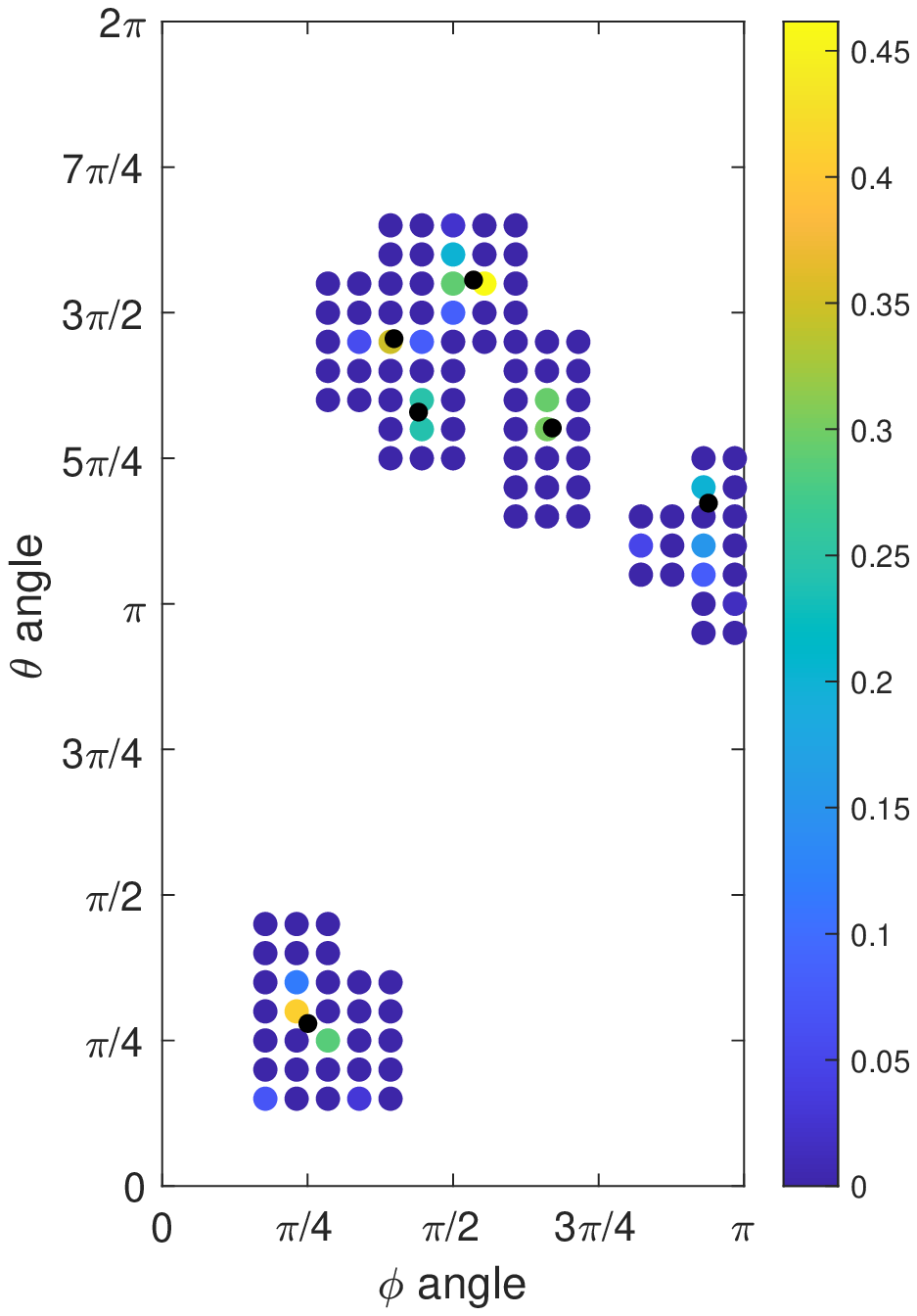}
                \caption{Structured IHT on New Grid}
        \end{subfigure}
        \begin{subfigure}[b]{0.35\textwidth}
                \centering
                \includegraphics[width=\textwidth,trim={4.5cm 0 5cm 0},clip]{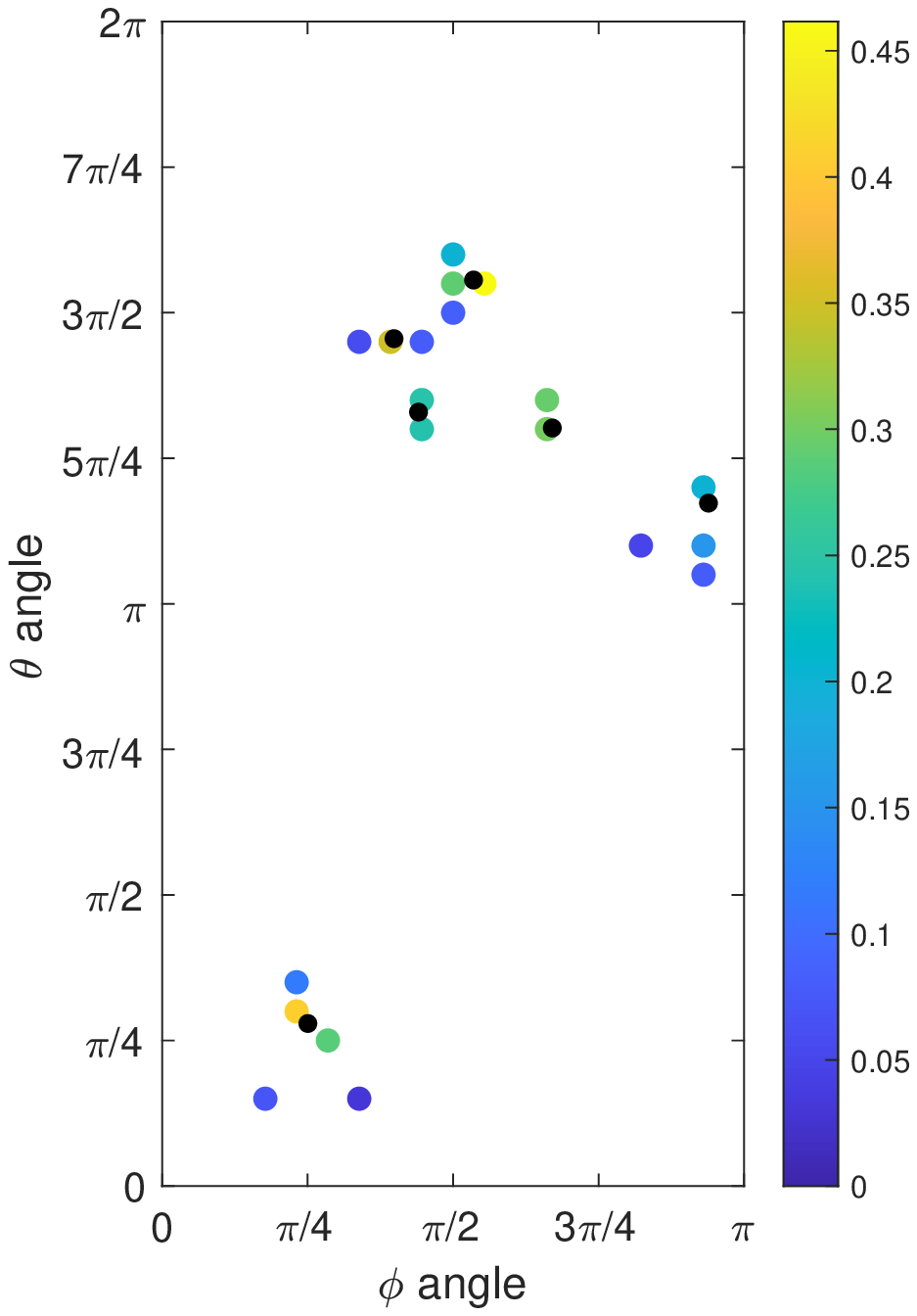}
                \caption{Keep Non-thresholded Points}
        \end{subfigure}
\caption{ \label{fig:offgrid1_iterations} Visualization of the first iteration through the off-grid recovery algorithm.  The process starts from an initial uniform grid of 200 candidate angles (A).  After running structured IHT on this setup, twenty angles were kept above the threshold (B).  In (C), the grid was refined about each of these grid points, and structured IHT was run again, thresholding all but the largest entry in each refined grid grouping.   We then thresholded and eliminated all points that were less than 1/4 the mean of all remaining values.  The steps shown in (C) and (D) were then iteratively repeated for 100 iterations in total.   }
\end{figure}

Two errors were measured to quantify the accuracy of recovery.  Let $a^{rec}_j$ and $\Theta^{rec}_j$ be the reconstructed amplitude and incident angle values.  We define the errors:
\begin{equation}
a_{err}^2 = \sum_{j=1}^6 |a^{rec}_j-a_j|^2 \qquad \ , \ \qquad \Theta_{err}^2 = \sum_{j=1}^6 |\Theta^{rec}_j-\Theta_j|^2 \ .
\end{equation}
Note that there was no guarantee that there were exactly 6 sources recovered by this algorithm.  However, this algorithm accurately found exactly the 6 sources.  For the noiseless experiment, $a_{err}=7.49\times10^{-6}$ and $\Theta_{err}=1.19\times10^{-5}$.  For the case with 1\% noise, the final error values increased to $a_{err}=4.10\times10^{-3}$ and $\Theta_{err}=1.22\times10^{-2}$, but were still quite accurate.  In subsequent tests, the algorithm was able to consistently and reliably recover up to $\sim$12 sources whose incident angles were separated by at least 0.05 units and whose amplitudes were normally distributed with mean 1 and standard deviation 0.2.  Overall, we do not claim that  our choice of parameters in the algorithm were optimal, but they worked reliably.  

In Fig.~\ref{fig:offgrid1_cohs}, we show how the coherence values change as the algorithm progresses, and why it is reasonable to expect convergence in some sense.  The coherence of the matrix $A(\mathbf{Y}^{(t)})$ each iteration is barely discernible from 1, and these values do not decrease as we progress through the algorithm.  This is to be expected, because we are refining the grid, and our candidate angles are getting closer together.  The same is true for any $\mu_{S_j^{(t)}}$.  However, the restricted coherence between two different sets improves as the algorithm hones in on the true locations.  The values of $\mu_{S_j^{(t)},S_{j'}^{(t)}}$ converge to 0.6835, which is the coherence of the matrix $A$ if we only input columns for the 6 true source angles.  One can see the quick drop-off in this behavior at the 33rd iteration in the noiseless case and at the 61st iteration in the case with noise.  At these iterations the algorithm correctly thresholds down to 6 source angles that are near the correct values.   

\begin{figure}[h!]
\centering
\includegraphics[width=0.6\textwidth]{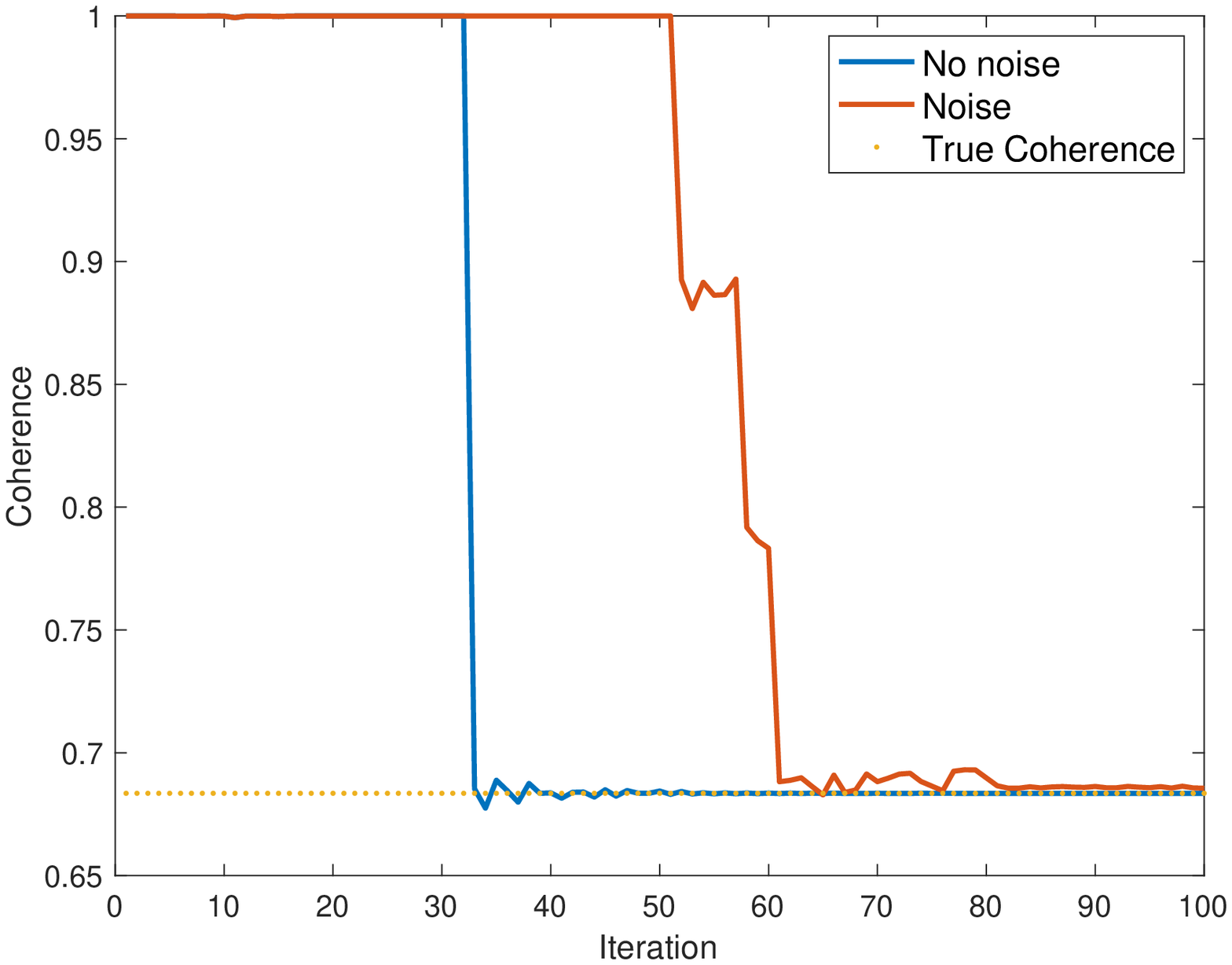}
\caption{ \label{fig:offgrid1_cohs} Plotted values of the restricted coherence $\mu_{S,S'}$ of the matrix $A(\mathbf{Y}^{(t)})$ after refining the grid.  This corresponds to picture (C) in Fig.~\ref{fig:offgrid1_iterations}.  The true coherence shown in the yellow dotted line at 0.6835 is the coherence of the matrix with 6 columns representing the 6 true incident angles of the sources.  One can see that for both the noise and noiseless cases, the coherence remains near 1 for a time.  However, once the algorithm eliminates extra grid points near the true incident angles, the coherence drops towards the optimal coherence and converges towards this value as the grid refines towards the true solution.}
\end{figure}

\section{Discussion and Future Works}

We have considered a variant of IHT with structured sparsity.   The convergence and error of the method were analyzed by means of coherence and compared to numerical simulations. When additional information is known in the form of structured sparsity, the analysis provides stronger and faster convergence guarantees than for IHT.  A related off-grid recovery algorithm was proposed that can overcome some of the limitations of the theory based on coherence.  Both algorithms performed well to address a specific inverse source problem.

The grid refinement and thresholding procedure for the off-grid recovery algorithms were determined by numerical testing.  However, it is unclear what would be optimal choices for these steps.  A thorough numerical study to optimize the off-grid algorithm will be considered.  One direction for future work is to compare the proposed methods (especially the off-grid algorithms) against other algorithms for DOA problems.  While this paper used the ISP as an example for demonstrating the use of the proposed algorithms, it would be worthwhile to benchmark structured IHT and Algorithm 3 against popular algorithms for solving ISPs.  This type of investigation could help develop the algorithm to find ideal parameters at each step.  
Accompanying this algorithm development, it would be beneficial to generalize the theory for Algorithm 2 to allow for more general grid refinement procedures.  This could also lead towards finding optimal parameters.

The structure of the sparsity considered in this paper was one specific definition.  It would be very interesting to extend the structured IHT algorithm (and its coherence theory) to apply to other definitions of structured sparsity.  Allowing for overlapping groups would be a natural next step.  Lastly, extending the structured IHT algorithm to solve nonlinear sparse recovery problems would be interesting.  A natural application would be the ISP with scattering.

\bibliographystyle{plain}
\bibliography{local}

\begin{thebibliography}{10}

\bibitem{IHTL}
Ben Adcock, Simone Brugiapaglia, and Matthew King-Roskamp.
\newblock {Iterative and greedy algorithms for the sparsity in levels model in
  compressed sensing}.
\newblock In Dimitri Van~De Ville, Manos Papadakis, and Yue~M. Lu, editors,
  {\em Wavelets and Sparsity XVIII}, volume 11138, pages 76 -- 89.
  International Society for Optics and Photonics, SPIE, 2019.

\bibitem{Adcock_2021}
Ben Adcock, Simone Brugiapaglia, and Matthew King-Roskamp.
\newblock The benefits of acting locally: Reconstruction algorithms for sparse
  in levels signals with stable and robust recovery guarantees.
\newblock {\em IEEE Transactions on Signal Processing}, 69:3160–3175, 2021.

\bibitem{adcock2017breaking}
Ben Adcock, Anders~C Hansen, Clarice Poon, and Bogdan Roman.
\newblock Breaking the coherence barrier: A new theory for compressed sensing.
\newblock In {\em Forum of Mathematics, Sigma}, volume~5. Cambridge University
  Press, 2017.

\bibitem{bach2012structured}
Francis Bach, Rodolphe Jenatton, Julien Mairal, Guillaume Obozinski, et~al.
\newblock Structured sparsity through convex optimization.
\newblock {\em Statistical Science}, 27(4):450--468, 2012.

\bibitem{bandeira2013road}
Afonso~S Bandeira, Matthew Fickus, Dustin~G Mixon, and Percy Wong.
\newblock The road to deterministic matrices with the restricted isometry
  property.
\newblock {\em Journal of Fourier Analysis and Applications}, 19(6):1123--1149,
  2013.

\bibitem{doi:10.1137/15M1043972}
Alexander Bastounis and Anders~C. Hansen.
\newblock On the absence of uniform recovery in many real-world applications of
  compressed sensing and the restricted isometry property and nullspace
  property in levels.
\newblock {\em SIAM Journal on Imaging Sciences}, 10(1):335--371, 2017.

\bibitem{blanchard2015cgiht}
Jeffrey~D Blanchard, Jared Tanner, and Ke~Wei.
\newblock Cgiht: conjugate gradient iterative hard thresholding for compressed
  sensing and matrix completion.
\newblock {\em Information and Inference: A Journal of the IMA}, 4(4):289--327,
  2015.

\bibitem{blumensath2012accelerated}
Thomas Blumensath.
\newblock Accelerated iterative hard thresholding.
\newblock {\em Signal Processing}, 92(3):752--756, 2012.

\bibitem{blumensath2008iterative}
Thomas Blumensath and Mike~E Davies.
\newblock Iterative thresholding for sparse approximations.
\newblock {\em Journal of Fourier analysis and Applications}, 14(5-6):629--654,
  2008.

\bibitem{blumensath2009iterative}
Thomas Blumensath and Mike~E Davies.
\newblock Iterative hard thresholding for compressed sensing.
\newblock {\em Applied and computational harmonic analysis}, 27(3):265--274,
  2009.

\bibitem{blumensath2010normalized}
Thomas Blumensath and Mike~E Davies.
\newblock Normalized iterative hard thresholding: Guaranteed stability and
  performance.
\newblock {\em IEEE Journal of selected topics in signal processing},
  4(2):298--309, 2010.

\bibitem{boyer2019compressed}
Claire Boyer, J{\'e}r{\'e}mie Bigot, and Pierre Weiss.
\newblock Compressed sensing with structured sparsity and structured
  acquisition.
\newblock {\em Applied and Computational Harmonic Analysis}, 46(2):312--350,
  2019.

\bibitem{5075882}
T.~Tony Cai, Guangwu Xu, and Jun Zhang.
\newblock On recovery of sparse signals via $\ell _{1}$ minimization.
\newblock {\em IEEE Transactions on Information Theory}, 55(7):3388--3397,
  2009.

\bibitem{5484999}
Tony~Tony Cai, Lie Wang, and Guangwu Xu.
\newblock Stable recovery of sparse signals and an oracle inequality.
\newblock {\em IEEE Transactions on Information Theory}, 56(7):3516--3522,
  2010.

\bibitem{calderbank2010construction}
Robert Calderbank, Stephen Howard, and Sina Jafarpour.
\newblock Construction of a large class of deterministic sensing matrices that
  satisfy a statistical isometry property.
\newblock {\em IEEE journal of selected topics in signal processing},
  4(2):358--374, 2010.

\bibitem{candes2007sparsity}
Emmanuel Candes and Justin Romberg.
\newblock Sparsity and incoherence in compressive sampling.
\newblock {\em Inverse problems}, 23(3):969, 2007.

\bibitem{candes2006stable}
Emmanuel~J Candes, Justin~K Romberg, and Terence Tao.
\newblock Stable signal recovery from incomplete and inaccurate measurements.
\newblock {\em Communications on Pure and Applied Mathematics: A Journal Issued
  by the Courant Institute of Mathematical Sciences}, 59(8):1207--1223, 2006.

\bibitem{CandesTao2005}
Emmanuel~J Candes and Terence Tao.
\newblock Decoding by linear programming.
\newblock {\em IEEE transactions on information theory}, 51(12):4203--4215,
  2005.

\bibitem{chen2001atomic}
Scott~Shaobing Chen, David~L Donoho, and Michael~A Saunders.
\newblock Atomic decomposition by basis pursuit.
\newblock {\em SIAM review}, 43(1):129--159, 2001.

\bibitem{daubechies2004iterative}
Ingrid Daubechies, Michel Defrise, and Christine De~Mol.
\newblock An iterative thresholding algorithm for linear inverse problems with
  a sparsity constraint.
\newblock {\em Communications on Pure and Applied Mathematics: A Journal Issued
  by the Courant Institute of Mathematical Sciences}, 57(11):1413--1457, 2004.

\bibitem{donoho2006compressed}
David~L Donoho.
\newblock Compressed sensing.
\newblock {\em IEEE Transactions on information theory}, 52(4):1289--1306,
  2006.

\bibitem{donoho2003optimally}
David~L Donoho and Michael Elad.
\newblock Optimally sparse representation in general (nonorthogonal)
  dictionaries via l1 minimization.
\newblock {\em Proceedings of the National Academy of Sciences},
  100(5):2197--2202, 2003.

\bibitem{donoho2005stable}
David~L Donoho, Michael Elad, and Vladimir~N Temlyakov.
\newblock Stable recovery of sparse overcomplete representations in the
  presence of noise.
\newblock {\em IEEE Transactions on information theory}, 52(1):6--18, 2005.

\bibitem{hardin2016comparison}
Doug~P Hardin, TJ~Michaels, and Edward~B Saff.
\newblock A comparison of popular point configurations on s2.
\newblock {\em arXiv preprint arXiv:1607.04590}, 2016.

\bibitem{NIPS2016_8e82ab72}
Prateek Jain, Nikhil Rao, and Inderjit~S Dhillon.
\newblock Structured sparse regression via greedy hard thresholding.
\newblock In D.~Lee, M.~Sugiyama, U.~Luxburg, I.~Guyon, and R.~Garnett,
  editors, {\em Advances in Neural Information Processing Systems}, volume~29.
  Curran Associates, Inc., 2016.

\bibitem{jain2014iterative}
Prateek Jain, Ambuj Tewari, and Purushottam Kar.
\newblock On iterative hard thresholding methods for high-dimensional
  m-estimation.
\newblock In {\em Advances in Neural Information Processing Systems}, pages
  685--693, 2014.

\bibitem{jin2017sparsity}
Bangti Jin, Peter Maa{\ss}, and Otmar Scherzer.
\newblock Sparsity regularization in inverse problems.
\newblock {\em Inverse Problems}, 33(6), 2017.

\bibitem{jones2015asymptotic}
Alexander~Daniel Jones, Ben Adcock, and Anders~C Hansen.
\newblock On asymptotic incoherence and its implications for compressed sensing
  of inverse problems.
\newblock {\em IEEE Transactions on Information Theory}, 62(2):1020--1037,
  2015.

\bibitem{kim2012compressive}
Jong~Min Kim, Ok~Kyun Lee, and Jong~Chul Ye.
\newblock Compressive music: Revisiting the link between compressive sensing
  and array signal processing.
\newblock {\em IEEE Transactions on Information Theory}, 58(1):278--301, 2012.

\bibitem{kim2010tree}
Seyoung Kim and Eric~P Xing.
\newblock Tree-guided group lasso for multi-task regression with structured
  sparsity.
\newblock In {\em ICML}, 2010.

\bibitem{li2019compressed}
Chen Li and Ben Adcock.
\newblock Compressed sensing with local structure: uniform recovery guarantees
  for the sparsity in levels class.
\newblock {\em Applied and Computational Harmonic Analysis}, 46(3):453--477,
  2019.

\bibitem{electronics8111209}
Yun Ling, Huotao Gao, Guobao Ru, Haitao Chen, Boya Li, and Ting Cao.
\newblock Grid reconfiguration method for off-grid doa estimation.
\newblock {\em Electronics}, 8(11), 2019.

\bibitem{Lopes13}
Miles Lopes.
\newblock Estimating unknown sparsity in compressed sensing.
\newblock In {\em ICML (3)}, pages 217--225, 2013.

\bibitem{7506083}
Miles~E. Lopes.
\newblock Unknown sparsity in compressed sensing: Denoising and inference.
\newblock {\em IEEE Transactions on Information Theory}, 62(9):5145--5166,
  2016.

\bibitem{maurer2012structured}
Andreas Maurer, Massimiliano Pontil, and Gabor Lugosi.
\newblock Structured sparsity and generalization.
\newblock {\em Journal of Machine Learning Research}, 13(3), 2012.

\bibitem{micchelli2013regularizers}
Charles~A Micchelli, Jean~M Morales, and Massimiliano Pontil.
\newblock Regularizers for structured sparsity.
\newblock {\em Advances in Computational Mathematics}, 38(3):455--489, 2013.

\bibitem{rudelson2008sparse}
Mark Rudelson and Roman Vershynin.
\newblock On sparse reconstruction from fourier and gaussian measurements.
\newblock {\em Communications on Pure and Applied Mathematics: A Journal Issued
  by the Courant Institute of Mathematical Sciences}, 61(8):1025--1045, 2008.

\bibitem{shan1985spatial}
Tie-Jun Shan, Mati Wax, and Thomas Kailath.
\newblock On spatial smoothing for direction-of-arrival estimation of coherent
  signals.
\newblock {\em IEEE Transactions on Acoustics, Speech, and Signal Processing},
  33(4):806--811, 1985.

\bibitem{shervashidze2015learning}
Nino Shervashidze and Francis Bach.
\newblock Learning the structure for structured sparsity.
\newblock {\em IEEE Transactions on Signal Processing}, 63(18):4894--4902,
  2015.

\bibitem{stoica1990maximum}
Petre Stoica and Kenneth~C Sharman.
\newblock Maximum likelihood methods for direction-of-arrival estimation.
\newblock {\em IEEE Transactions on Acoustics, Speech, and Signal Processing},
  38(7):1132--1143, 1990.

\bibitem{tan2013sparse}
Zhao Tan and Arye Nehorai.
\newblock Sparse direction of arrival estimation using co-prime arrays with
  off-grid targets.
\newblock {\em IEEE Signal Processing Letters}, 21(1):26--29, 2013.

\bibitem{tibshirani1996regression}
Robert Tibshirani.
\newblock Regression shrinkage and selection via the lasso.
\newblock {\em Journal of the Royal Statistical Society: Series B
  (Methodological)}, 58(1):267--288, 1996.

\bibitem{tillmann2013computational}
Andreas~M Tillmann and Marc~E Pfetsch.
\newblock The computational complexity of the restricted isometry property, the
  nullspace property, and related concepts in compressed sensing.
\newblock {\em IEEE Transactions on Information Theory}, 60(2):1248--1259,
  2013.

\bibitem{tropp2006just}
Joel~A Tropp.
\newblock Just relax: Convex programming methods for identifying sparse signals
  in noise.
\newblock {\em IEEE transactions on information theory}, 52(3):1030--1051,
  2006.

\bibitem{tropp2007signal}
Joel~A Tropp and Anna~C Gilbert.
\newblock Signal recovery from random measurements via orthogonal matching
  pursuit.
\newblock {\em IEEE Transactions on information theory}, 53(12):4655--4666,
  2007.

\bibitem{tuncer2009classical}
T~Engin Tuncer and Benjamin Friedlander.
\newblock {\em Classical and modern direction-of-arrival estimation}.
\newblock Academic Press, 2009.

\bibitem{wang2015linear}
Yu~Wang, Jinshan Zeng, Zhimin Peng, Xiangyu Chang, and Zongben Xu.
\newblock Linear convergence of adaptively iterative thresholding algorithms
  for compressed sensing.
\newblock {\em IEEE Transactions on Signal Processing}, 63(11):2957--2971,
  2015.

\bibitem{wen2016learning}
Wei Wen, Chunpeng Wu, Yandan Wang, Yiran Chen, and Hai Li.
\newblock Learning structured sparsity in deep neural networks.
\newblock {\em Advances in neural information processing systems},
  29:2074--2082, 2016.

\bibitem{wu2018two}
Xiaohuan Wu, Wei-Ping Zhu, Jun Yan, and Zeyun Zhang.
\newblock Two sparse-based methods for off-grid direction-of-arrival
  estimation.
\newblock {\em Signal processing}, 142:87--95, 2018.

\bibitem{yang2018sparse}
Zai Yang, Jian Li, Petre Stoica, and Lihua Xie.
\newblock Sparse methods for direction-of-arrival estimation.
\newblock In {\em Academic Press Library in Signal Processing, Volume 7}, pages
  509--581. Elsevier, 2018.

\bibitem{yang2012off}
Zai Yang, Lihua Xie, and Cishen Zhang.
\newblock Off-grid direction of arrival estimation using sparse bayesian
  inference.
\newblock {\em IEEE Transactions on Signal Processing}, 61(1):38--43, 2012.

\bibitem{yu2011solving}
Guoshen Yu, Guillermo Sapiro, and St{\'e}phane Mallat.
\newblock Solving inverse problems with piecewise linear estimators: From
  gaussian mixture models to structured sparsity.
\newblock {\em IEEE Transactions on Image Processing}, 21(5):2481--2499, 2011.

\bibitem{yuan2006model}
Ming Yuan and Yi~Lin.
\newblock Model selection and estimation in regression with grouped variables.
\newblock {\em Journal of the Royal Statistical Society: Series B (Statistical
  Methodology)}, 68(1):49--67, 2006.

\bibitem{zhang2013sparsity}
Yimin~D Zhang, Moeness~G Amin, and Braham Himed.
\newblock Sparsity-based doa estimation using co-prime arrays.
\newblock In {\em 2013 IEEE International Conference on Acoustics, Speech and
  Signal Processing}, pages 3967--3971. IEEE, 2013.

\bibitem{zhou2019stochastic}
Baojian Zhou, Feng Chen, and Yiming Ying.
\newblock Stochastic iterative hard thresholding for graph-structured sparsity
  optimization.
\newblock {\em arXiv preprint arXiv:1905.03652}, 2019.

\end{thebibliography}
\end{document}